\newtheorem{theorem}{Theorem}[section]
\newtheorem{lemma}[theorem]{Lemma}
\theoremstyle{definition}
\newtheorem{definition}[theorem]{Definition}
\theoremstyle{remark}
\newtheorem{remark}[theorem]{Remark}
\newtheorem{corollary}[theorem]{Corollary}
\newtheorem{proposition}[theorem]{Proposition}
\numberwithin{equation}{section}
\newcommand{\ql}{\mathbb{Q}_{\ell}}
 \newcommand{\res}{\textnormal{res}}
\newcommand{\sht}{\textnormal{Sht}}
\newcommand{\loc}{\textnormal{loc}}
\newcommand{\mb}{\mu_{\bullet}}
\newcommand{\nb}{\nu_{\bullet}}
\newcommand{\vb}{V_{\bullet}}
\newcommand{\wb}{W_{\bullet}}
\newcommand{\hk}{\textnormal{Hk}}
\newcommand{\Hmbnb}{\textnormal{Hk}_{\mu_{\bullet}\mid \nu_{\bullet}}}
\newcommand{\Smbnb}{\sht_{\mu_{\bullet}\mid \nu_{\bullet}}}
\newcommand{\hg}{\textnormal{Hom}_{\hat{G}}}
\newcommand{\cc}{\textnormal{Corr}}
\newcommand{\Gmbnb}{\textnormal{Gr}_{\mu_{\bullet}\mid \nu_{\bullet}}}
\newcommand{\sv}{\mathcal{S}_{V_1,V_2}}
\newcommand{\sch}{S(\lambda)}
\begin{document}

\title{A Geometric Jacquet-Langlands Transfer for automorphic forms of higher weights}

\author{Jize Yu}
\address{Institute for Advanced Study, School of Mathematics,1 Einstein Drive,Princeton, NJ 08540}
\curraddr{Department of Mathematics, Caltech, 1200 East California Boulevard, Pasadena, CA 91125}
\email{jize.yu.math@gmail.com}
\thanks{The author is very grateful to Xinwen Zhu for introducing the beautiful theory of moduli of Shtukas to him, and providing numerous helpful comments and discussions while he was working on this project. }


\subjclass[2020]{Primary 14G35, 11G18; Secondary 14D24, 20C20}

\date{}

\keywords{geometric Satake equivalence, Hodge type Shimura varieties, Jacquet-Langlands Transfer, moduli of local Shtukas}

\begin{abstract}
In this paper, we give a geometric construction of the Jacquet-Langlands transfer for automorphic forms of higher weights by studying the geometry of the mod $p$ fibres of different Hodge type Shimura varieties which satisfy a mild assumption and the cohomological correspondences between them.
\end{abstract}

\maketitle
\tableofcontents

\section{Introduction}
Let $G$ and $G'$ be two algebraic groups over $\mathbb{Q}$. We assume that they are isomorphic at all finite places
of $\mathbb{Q}$, but not necessarily at infinity. Roughly speaking, the Jacquet-Langlands correspondence
predicts, in many cases, the following phenomenon:  there exists a natural map between the set of automorphic
representations of $G$ and that of $G'$ such that if $\pi'$ is the automorphic representation of $G'$
which corresponds to an automorphic representation $\pi$ of $G$, then $\pi_{\nu}$ is isomorphic to $\pi'_{\nu}$ at all finite
places $\nu$. It is considered as one of the first examples of the Langlands philosophy that maps between $L$-groups induce maps between automorphic representations.

The classical way of establishing this correspondence is via a comparison of the trace formulas for
$G$ and $G'$. This approach allows us to conclude a map between suitable
spaces of automorphic forms for $G$ and $G'$ as abstract representations. However, the resulting map is not canonical, and we, therefore, hope to have a more natural way of understanding
this correspondence. Alternative geometric approaches of establishing the Jacquet-Langlands correspondence were first noticed by Ribet \cite{ribet1989bimodules} and Serre \cite{serre1996two}, and later followed by Ghitza \cite{ghitza2003siegel}, \cite{ghitza2005all}, and Helm \cite{helm2010towards} \cite{helm2012geometric}.

In the recent work of Xiao-Zhu \cite{xiao2017cycles}, a more general strategy is developed
to relate the geometry and ($\ell$-adic) cohomology of the mod $p$ fibers of different Shimura varieties,
which unifies and generalizes some aspects of the above mentioned works. In this paper, we adapt the machinery developed in \textit{loc.cit} and apply the integral coefficient geometric Satake equivalence in mixed characteristic obtained by the author \cite{yu2019integral}\footnote{We mention that Peter Scholze announced the same result using the perfectoid approach.} to establish a Jacquet-Langlands transfer for automorphic forms with higher weights.

\subsection{Main theorem}
\subsubsection{Statement of the Main Theorem}Let $(G_1, X_1)$ and $(G_2, X_2)$ be two Hodge type Shimura data equipped with an isomorphism $\theta:G_{1,\mathbb{A}_f}\simeq G_{2,\mathbb{A}_f}$. Assume that there exists an inner twist $\Psi:G_1\rightarrow G_2$ which is compatible with $\theta$. We assume that $K_{i}\subset G(\mathbb{A}_f)$ to be sufficiently small such that $\theta K_1=K_2$. In addition, we assume that $p$ is an unramified prime. Then $K_{1,p}$ (and therefore $K_{2,p}$) is hyperspecial. Let $\underline{G_i}$ be the integral model of $G_{i,\mathbb{Q}_p}$
over $\mathbb{Z}_p$ determined by $K_{i,p}$. Then  $\underline{G_1}\simeq \underline{G_2}$,  and we can thus identify
their Langlands dual groups, which we denote by $\hat{G}_{\ql}$. We further fix a pinning $(\hat{G},\hat{B},\hat{T},\hat{X})$ of $\hat{G}$. Choose an isomorphism $\iota:\mathbb{C}\simeq \bar{\mathbb{Q}}_p$. Let
$\{\mu_i\}$ denote the conjugacy class of Hodge cocharacters determined by $X_i$.

Let $V_i:=V_{\mu_i}$ denote the irreducible representation of $\hat{G}_{\ql}$ of highest weight $\mu_i$. Let $\nu\mid p$ be a place of the compositum of reflex field $E_1$,$E_2$ of $(G_1,X_1)$, $(G_2,X_2)$ (determined
by our choice of isomorphism $\iota$). Write $k_{\nu}$ for the residue field of $E_1E_2$ at $\nu$. Results of Kisin \cite{kisin2010integral} and Vasiu  \cite{vasiu2007good} state that there exists a canonical smooth integral model of $\textnormal{Sh}_K(G_i,X_i)$ over $\mathcal{O}_{E,(\nu)}$. Let  $d_i=\dim \textnormal{Sh}_K(G_i,X_i)$ and $\textnormal{Sh}_{\mu_i}$ denote the mod $p$ fiber of this canonical integral model, base changed to $\bar{k}_\nu$. Our assumption on $p$ implies that the action of the Galois group $\textnormal{Gal}(\Bar{\mathbb{Q}}_p/\mathbb{Q}_p)$ on $(\hat{G},\hat{B},\hat{T},\hat{X})$ factors through some finite quotient $\textnormal{Gal}(\mathbb{F}_{p^n}/\mathbb{F}_p)$ for some finite field $\mathbb{F}_{p^n}$ which contains $k_{\nu}$. Write $\sigma\in \textnormal{Gal}(\mathbb{F}_{p^n}/\mathbb{F}_p)$ for the arithmetic Frobenius. Consider the conjugation action of $\hat{G}_{\ql}$ on the (non-neutral) component $\hat{G}_{\ql}\sigma\subset \hat{G}_{\ql}\rtimes \langle\sigma\rangle$. Denote by $\textnormal{Coh}^{\hat{G}}(\hat{G}\sigma)$ the abelian category of coherent sheaves on the quotient stack $[\hat{G}_{\ql}\sigma/\hat{G}_{\ql}]$.

To each representation $W$ of $G_{\ql}$, we can attach an $\ell$-adic \'etale local system $\mathcal{L}_{i,W,\ql}$ on $\textnormal{Sh}_{\mu_i}$ by varying the level structure at $\ell$ (see \S$6.2$). The natural projection $[\hat{G}_{\ql}\sigma/\hat{G}_{\ql}]\rightarrow \mathbb{B}\hat{G}_{\ql}$ attaches to each representation $V$ of $\hat{G}_{\ql}$ a vector bundle $\widetilde{V}$ on $[\hat{G}_{\ql}\sigma/\hat{G}_{\ql}]$. Denote the global section of the structure sheaf on the quotient stack $[\hat{G}\sigma/\hat{G}]$ by $\mathcal{J}$, and the prime-to-$p$ Hecke algebra by $\mathcal{H}^p$. Fix a half Tate twist $\ql(1/2)$.

We state our main theorem.
\begin{theorem}
Let $\mathcal{L}_i:=\mathcal{L}_{i,W,\ql}[d_i](d_i/2)$. Under a  mild assumption, 
\begin{itemize}
\item[(1)] there exists a map 
\begin{equation}
  \textnormal{Spc}:\textnormal{Hom}_{\textnormal{Coh}^{\hat{G}}(\hat{G}\sigma)}(\widetilde{V_1},\widetilde{V_2})\rightarrow  \textnormal{Hom}_{\mathcal{H}^p\otimes \mathcal{J}}(\textnormal{H}_c^{*}(\textnormal{Sh}_{\mu_1},\mathcal{L}_{1}), \textnormal{H}_c^{*}(\textnormal{Sh}_{\mu_2},\mathcal{L}_{2})),
\end{equation}
which is compatible with compositions in the source and target. 
\item[(2)] the ring of endomorphisms $\textnormal{End}_{[\hat{G}\sigma/\hat{G}]}(\mathcal{O}_{[\hat{G}\sigma/\hat{G}]})$ acts on the compactly supported cohomology $\textnormal{H}_c^{*}(\textnormal{Sh}_{\mu_i},\mathcal{L}_i)$ via $\textnormal{Spc}$ and this action can be identified with the classical Satake isomorphism if $\textnormal{Sh}_{K}(G_1,X_1)=\textnormal{Sh}_{K}(G_2,X_2)$ is a Shimura set. 
\end{itemize}
\end{theorem}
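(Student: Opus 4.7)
The plan is to follow the geometric strategy of Xiao-Zhu \cite{xiao2017cycles}, adapted to mixed characteristic via the integral geometric Satake equivalence on the Witt vector affine Grassmannian. Roughly, for each morphism $\widetilde{V_1}\to\widetilde{V_2}$ in $\textnormal{Coh}^{\hat{G}}(\hat{G}\sigma)$, one constructs a cohomological correspondence on an interpolating moduli of Shtukas and then pushes it forward to obtain a map on the compactly supported cohomology of the mod $p$ Shimura varieties.

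First I would construct the relevant geometric objects. For cocharacter sequences $\mb,\nb$ compatible with $\mu_1,\mu_2$, introduce the local Hecke stack $\Hmbnb$ and a moduli of Shtukas $\Smbnb$ fitting into a diagram with natural projections $\pi_i$ to $\textnormal{Sh}_{\mu_i}$. The identification of the mod $p$ Shimura varieties with moduli of $G$-Shtukas (locally, via Rapoport-Zink/Kisin-type uniformization and the local model diagram) lets sheaves and cohomology transfer between the two sides, so that the $\ell$-adic local systems $\mathcal{L}_i$ pull back to honest objects on $\Smbnb$.

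Next I would use the geometric Satake equivalence to convert the coherent-sheaf input into a geometric morphism. The category $\textnormal{Coh}^{\hat{G}}(\hat{G}\sigma)$ is the natural home for Frobenius-twisted Satake sheaves: via the Langlands dual side, a morphism $f:\widetilde{V_1}\to\widetilde{V_2}$ corresponds to a morphism of perverse Satake sheaves on the relevant convolution Grassmannian compatible with the $\sigma$-twisted intertwiner structure (where $\hat{G}$ acts on $\mathcal{O}(\hat{G}\sigma)$ by $\sigma$-conjugation). Pulling back through the local model diagram produces a cohomological correspondence $c_f$ on $\Smbnb$ between $\mathcal{L}_1$ and $\mathcal{L}_2$, and setting $\textnormal{Spc}(f):=\pi_{2,!}\circ c_f\circ \pi_1^{*}$ (with appropriate shifts and Tate twists matching the normalization in $\mathcal{L}_i$) yields the desired map. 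Compatibility with composition in the source follows from compatibility of geometric Satake with tensor products together with the standard functoriality of composing cohomological correspondences via convolution on Hecke stacks, and $\mathcal{H}^p$-equivariance is automatic since the construction only touches the place $p$.

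For part (2), take $V_1=V_2=V$. The ring $\textnormal{End}_{[\hat{G}\sigma/\hat{G}]}(\mathcal{O}_{[\hat{G}\sigma/\hat{G}]})$ is the ring of $\sigma$-conjugation invariant functions on $\hat{G}_{\ql}$, which under the twisted Satake isomorphism is identified with the spherical Hecke algebra at $p$. In the Shimura set case, $d_i=0$, $\mathcal{L}_i=\mathcal{L}_{i,W,\ql}$ and the cohomology is just functions on the Shimura set; the $\Smbnb$ appearing reduces to a disjoint union of Hecke double cosets, and a direct computation identifies the $\textnormal{Spc}$-action with the classical spherical Hecke action via the usual double coset correspondences. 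The main obstacle is verifying local-global compatibility: the Satake cohomological correspondence constructed locally on $\Hmbnb$ must descend to a well-defined correspondence on $\Smbnb$ between the mod $p$ Shimura varieties, compatibly with the Frobenius structure. This relies on good behavior of nearby cycles on the Witt vector affine Grassmannian and on the Frobenius-equivariance of the integral geometric Satake equivalence of \cite{yu2019integral}; the ``mild assumption'' presumably codifies exactly this (e.g.\ an assumption ensuring Rapoport-Zink uniformization or sufficiently well-understood Newton strata). Once this is established, both the compatibility with compositions and the Satake identification in (2) become essentially formal.
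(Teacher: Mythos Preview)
Your overall architecture is right---build a cohomological correspondence on a moduli of local Shtukas from the datum in $\textnormal{Hom}_{\textnormal{Coh}^{\hat{G}}(\hat{G}\sigma)}(\widetilde{V_1},\widetilde{V_2})$, pull it back along $\loc_p$ to the Shimura side, and pass to $\textnormal{H}_c^*$---but the step you summarize as ``a morphism $f:\widetilde{V_1}\to\widetilde{V_2}$ corresponds to a morphism of perverse Satake sheaves compatible with the $\sigma$-twisted intertwiner structure'' is precisely the nontrivial content and is not formal. The paper's mechanism is concrete: one first uses $\textnormal{Hom}_{\textnormal{Coh}^{\hat{G}}(\hat{G}\sigma)}(\widetilde{V_1},\widetilde{V_2})\cong (V_1^*\otimes\mathcal{O}_{\hat{G}\sigma}\otimes V_2)^{\hat{G}}$ and the surjection $\Theta_W:\hg(V_1,\sigma W^*\otimes V_2\otimes W)\to\textnormal{Hom}_{\textnormal{Coh}^{\hat{G}}(\hat{G}\sigma)}(\widetilde{V_1},\widetilde{V_2})$ coming from Peter--Weyl, so that every $f$ is represented by some $\mathbf{a}\in\hg(V_1,\sigma W^*\otimes V_2\otimes W)$. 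Then $\mathcal{C}_W(\mathbf{a})$ is built as the composition of three correspondences on \emph{restricted} local Shtukas: $\mathcal{C}^{\loc(m_1,n_1)}(\mathbf{a})$ from Satake, the graph of the \emph{partial Frobenius} $F_{(V_2\otimes W)\boxtimes W^*}^{-1}$, and $\mathcal{C}^{\loc(m_2,n_2)}(\textnormal{id}\otimes e_W)$. The partial Frobenius is what geometrically realizes the $\sigma$-twist; without it there is no way to produce a correspondence between $\sht^{\loc}_{\mu_1}$ and $\sht^{\loc}_{\mu_2}$ from a mere $\hat{G}$-intertwiner. One must then check independence of the auxiliary $W$, of $\mathbf{a}$ lifting $f$, and of the truncation parameters $(m_i,n_i)$ (this is the content of the key Lemmas in \S5), and separately that composition in $\textnormal{Coh}^{\hat{G}}(\hat{G}\sigma)$ matches composition of correspondences. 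None of this follows from monoidality of geometric Satake alone.

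Two further points where your sketch diverges from what is actually done. First, the construction is carried out with \emph{integral} coefficients: one chooses $\mathbb{Z}_\ell$-lattices $\Lambda_i\subset V_i$, applies the key theorem to get $\mathcal{S}_{\Lambda_1,\Lambda_2}$, and only afterward inverts $\ell$; this is the reason the integral Satake of \cite{yu2019integral} is invoked rather than just the $\bar{\mathbb{Q}}_\ell$ version, and is also what allows a further step you omit---promoting correspondences with constant coefficients to correspondences with the local system $\mathcal{L}_{W,\mathbb{Z}_\ell}$ by working mod $\ell^n$ at each finite level $K_\ell^{(n)}$ and descending along the $K_\ell/K_\ell^{(n)}$-action. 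Second, neither Rapoport--Zink uniformization nor nearby cycles enter: the passage from local Shtukas to Shimura varieties is via the perfectly smooth map $\loc_p(m,n):\textnormal{Sh}_{\mu_i}\to\sht_{\mu_i}^{\loc(m,n)}$, and the ``mild assumption'' is the condition $\mu_1\!\mid_{Z(\hat{G}^{\Gamma_{\mathbb{Q}_p}})}=\mu_2\!\mid_{Z(\hat{G}^{\Gamma_{\mathbb{Q}_p}})}$, which is exactly what guarantees the existence of the exotic Hecke correspondence $\textnormal{Sh}_{\mu_1\mid\mu_2}$ sitting Cartesianly over $\sht^{\loc}_{\mu_1\mid\mu_2}$. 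Your account of part~(2) is essentially correct in spirit; the identification with the classical Satake isomorphism is done by computing $\mathcal{C}_{V_\nu}(\mathbf{a})$ explicitly as a function on $\textnormal{Gr}_{\nu^*}(k)$ via the trace of Frobenius on Satake stalks.
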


This is a Jacquet-Langlands transfer for automorphic forms of higher weights which generalizes a previous construction given in \cite{xiao2017cycles}. We briefly discuss the proof of Theorem $1.1$.
\subsubsection{Strategy of Proof}
Let $\Lambda=\mathbb{Z}_\ell,\mathbb{F}_{\ell}$. The following theorem plays an essential role in the proof of the main result.
\begin{theorem}
For any projective $\Lambda$-modules $\Lambda_1,\Lambda_2\in\textnormal{Rep}_{\Lambda}(\hat{G}_{\Lambda})$, we choose appropriate integers $(m_1,n_1,m_2,n_2)$ and a dominant coweight $\lambda$ of $G$, and consider the \textit{Hecke correspondence} of moduli of local Shtukas
\begin{equation}
\begin{tikzcd}
\sht^{\loc(m_1,n_1)}_{\Lambda_1} \arrow[r,leftarrow,"h_{\Lambda_1}^{\leftarrow}"] &  \sht^{\lambda,\loc(m_1,n_1)}_{\Lambda_1\mid \Lambda_2} \arrow[r,"h_{\Lambda_2}^{\rightarrow}"] & 
\sht^{\loc(m_2,n_2)}_{\Lambda_2}
\end{tikzcd}.
\end{equation}
Then  there exists the following map
\begin{equation}
    \mathcal{S}_{\Lambda_1,\Lambda_2}:\textnormal{Hom}_{\textnormal{Coh}^{\hat{G}_{\Lambda}}(\hat{G}_{\Lambda}\sigma)}(\widetilde{\Lambda_1},\widetilde{\Lambda_2})\longrightarrow \textnormal{Hom}_{\textnormal{D}(\sht^{\loc(m_1,n_1)}_{\Lambda_1\mid \Lambda_2})}\Big ((h_{\Lambda_1}^{\leftarrow})^* S(\widetilde{\Lambda_1}),(h_{\Lambda_2}^{\rightarrow})^! S(\widetilde{\Lambda_2}) \Big ),
\end{equation}
which is independent of auxiliary choices.
\end{theorem}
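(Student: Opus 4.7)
The plan is to construct $\mathcal{S}_{\Lambda_1,\Lambda_2}$ by applying the integral coefficient geometric Satake equivalence in mixed characteristic to transport morphisms of $\hat{G}_\Lambda$-equivariant coherent sheaves on $\hat{G}_\Lambda\sigma$ to morphisms of sheaves on the Hecke correspondence of moduli of local Shtukas. The appearance of the twisted component $\hat{G}_\Lambda\sigma$ (rather than $\hat{G}_\Lambda$ itself) is not accidental: it reflects the Frobenius twist built into the Shtuka condition, and geometric Satake in mixed characteristic already packages this Frobenius equivariance into the Satake functor $S$.

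First, I would unpack the source. A morphism $f \colon \widetilde{\Lambda_1}\to\widetilde{\Lambda_2}$ on the quotient stack $[\hat{G}_\Lambda\sigma/\hat{G}_\Lambda]$ corresponds, by definition of the equivariant vector bundles $\widetilde{\Lambda_i}$ pulled back from $\mathbb{B}\hat{G}_\Lambda$, to a $\hat{G}_\Lambda$-equivariant morphism $\Lambda_1\otimes\mathcal{O}(\hat{G}_\Lambda)\to\Lambda_2\otimes\mathcal{O}(\hat{G}_\Lambda)$ for the $\sigma$-twisted conjugation action. Via \cite{yu2019integral}, the projective representations $\Lambda_i$ correspond to Satake perverse sheaves $S(\Lambda_i)$ on the relevant affine Grassmannian carrying a canonical Frobenius structure, and the $\sigma$-twisted $\hat{G}_\Lambda$-equivariance of $f$ translates naturally into a morphism between convolution Satake sheaves on the twisted product (Hecke) Grassmannian $\textnormal{Gr}^{\lambda}_{\Lambda_1\mid\Lambda_2}$ for $\lambda$ dominant enough to dominate the supports.

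Next, I would transport this morphism to the Shtuka side via the local model diagram. After choosing $(m_i,n_i)$ large enough that $\Lambda_i$ is represented at level $(m_i,n_i)$, the Hecke correspondence $\sht^{\lambda,\loc(m_1,n_1)}_{\Lambda_1\mid\Lambda_2}$ fits into a commuting square with $\textnormal{Gr}^{\lambda}_{\Lambda_1\mid\Lambda_2}$ whose vertical maps are (formally) smooth; this is the standard local-model/uniformization picture for moduli of local Shtukas. Smooth pullback along these vertical maps then identifies $(h_{\Lambda_1}^{\leftarrow})^*S(\widetilde{\Lambda_1})$ and $(h_{\Lambda_2}^{\rightarrow})^!S(\widetilde{\Lambda_2})$, up to a common shift and Tate twist absorbed into the normalization of $S$, with the analogous objects on the Grassmannian correspondence, and the Satake morphism from the previous step pulls back to furnish the desired element of the target Hom space.

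Finally, independence of the auxiliary data $(m_i,n_i)$ and $\lambda$ should follow from compatibility of the construction with the obvious transition maps: enlarging $(m_i,n_i)$ is governed by a smooth pro-projection under which $*$-pullback is fully faithful on the relevant subcategory of Satake-type sheaves, while enlarging $\lambda$ to $\lambda'$ factors the Hecke correspondence through a closed embedding along which convolution with the trivial representation is the identity, so the unit and associativity constraints of geometric Satake ensure the Satake morphism is preserved. The main technical obstacle I anticipate is precisely the integral coefficient setting (particularly $\Lambda=\mathbb{F}_\ell$): over $\overline{\mathbb{Q}}_\ell$ one could appeal to semisimplicity of the Satake category, but for projective integral representations one must systematically juggle $!$-pullbacks, base change, and projective resolutions compatible with $S$. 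This is feasible precisely because $S(\Lambda_i)$ remains flat perverse in the integral mixed characteristic geometric Satake of \cite{yu2019integral}, but checking independence under enlargement of $\lambda$ with $\Lambda$-coefficients, where $S(\Lambda_1)\star S(\Lambda_2)$ may fail to split, is where the most delicate bookkeeping lies.
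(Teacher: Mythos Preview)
Your proposal has a genuine gap at its core. You claim that ``the $\sigma$-twisted $\hat{G}_\Lambda$-equivariance of $f$ translates naturally into a morphism between convolution Satake sheaves'' on a Hecke Grassmannian, and that geometric Satake ``already packages this Frobenius equivariance into the Satake functor $S$.'' Neither is correct. The geometric Satake equivalence of \cite{yu2019integral} is an equivalence $\textnormal{Rep}_\Lambda(\hat{G}_\Lambda)\simeq\textnormal{Sat}_{G,\Lambda}$; it says nothing directly about $\textnormal{Coh}^{\hat{G}_\Lambda}(\hat{G}_\Lambda\sigma)$. Concretely, $\textnormal{Hom}_{\textnormal{Coh}^{\hat{G}_\Lambda}(\hat{G}_\Lambda\sigma)}(\widetilde{\Lambda_1},\widetilde{\Lambda_2})\cong\textnormal{Hom}_{\hat{G}_\Lambda}(\Lambda_1,\mathcal{O}_{\hat{G}}\otimes\Lambda_2)$, which involves the entire regular representation $\mathcal{O}_{\hat{G}}$ and is typically much larger than $\textnormal{Hom}_{\hat{G}_\Lambda}(\Lambda_1,\Lambda_2)$. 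You never explain how this extra $\mathcal{O}_{\hat{G}}$ factor is to be absorbed on the geometric side.

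The paper handles this by introducing an auxiliary projective $W\in\textnormal{Rep}_\Lambda(\hat{G}_\Lambda)$ and lifting a morphism $\widetilde{\Lambda_1}\to\widetilde{\Lambda_2}$ to an element $\mathbf{a}\in\textnormal{Hom}_{\hat{G}_\Lambda}(\Lambda_1,\sigma W^*\otimes\Lambda_2\otimes W)$ via a map $\Theta_W$; this is now an honest morphism in $\textnormal{Rep}_\Lambda(\hat{G}_\Lambda)$ to which Satake applies. The resulting cohomological correspondence on the Shtuka side is then built as a \emph{composition of three pieces}: a Satake correspondence $\mathcal{C}^{\loc(m_1,n_1)}(\mathbf{a})$, the correspondence $\mathbb{D}\Gamma^*_{F^{-1}}$ coming from the \emph{partial Frobenius morphism} between restricted local Shtukas, and a second Satake correspondence coming from the evaluation map $e_W$. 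The partial Frobenius is precisely the geometric input that implements the $\sigma$-twist; it does not come for free from any local model diagram or from the Satake functor alone. Your smooth-pullback-from-the-Grassmannian picture would at best produce the map $\mathcal{C}^{\loc(m,n)}$ from $\textnormal{Hom}_{\hat{G}}(\Lambda_1,\Lambda_2)$ to cohomological correspondences supported on the \emph{Satake} correspondence ($\lambda=0$), not the map $\mathcal{S}_{\Lambda_1,\Lambda_2}$ from the full coherent Hom space to correspondences supported on the Hecke correspondence for general $\lambda$. The independence argument is likewise more subtle: one must show (Lemma~5.3) that the construction is independent of the choice of $W$ and of the lift $\mathbf{a}$ through $\Theta_W$, which is done by passing to the ind-object $\mathcal{O}_G$ and using an explicit identity (Lemma~5.2) relating the partial Frobenius to morphisms of the form $f_1\otimes f_2$.
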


We remark that the target of $\mathcal{S}_{\Lambda_1,\Lambda_2}$ can be understood as limits of the spaces of \textit{cohomological correspondences} between  $(\sht^{\loc(m_1,n_1)}_{\Lambda_1},S(\widetilde{\Lambda_1}))$ and $(\sht^{\loc(m_2,n_2)}_{\Lambda_2},S(\widetilde{\Lambda_2}))$ which are supported on the Hecke correspondence $(1.2)$. In \cite{xiao2017cycles}, Xiao-Zhu construct the maps $\sv$ for $\bar{\mathbb{Q}}_{\ell}$-representations $\hat{G}_{\bar{\mathbb{Q}}_\ell}$ in a categorical way. In fact, they define the category $\textnormal{P}^{\hk}(\sht^{\loc},\bar{\mathbb{Q}}_{\ell})$. Its objects are perverse sheaves supported on finite dimensional subspace of $\textnormal{Sht}^{\textnormal{loc}}_{\bar{\mathbb{F}}_q}$, and its spaces of morphisms are given by (limits of) cohomological correspondences supported on Hecke correspondences of \textit{restricted} local Shtukas. It receives a canonical functor from the 'normal' category of perverse sheaves $\textnormal{P}(\sht^{\loc},\bar{\mathbb{Q}}_{\ell})$, and this functor has a $\sigma$-twisted trace structure. Hence, the universal property of categorical traces asserts that it admits a functor from the $\sigma$-twisted categorical trace  $\textnormal{Tr}_{\sigma}(\textnormal{Rep}(\hat{G}_{\bar{\mathbb{Q}}_{\ell}}))\cong \textnormal{Coh}^{\hat{G}_{\bar{\mathbb{Q}}_{\ell}}}_{\textnormal{fr}}(\hat{G}_{\bar{\mathbb{Q}}_{\ell}}\sigma)$ in the sense of \cite{zhu2018geometric}. Here, the latter category is the full subcategory of $\textnormal{Coh}^{\hat{G}_{\bar{\mathbb{Q}}_{\ell}}}(\hat{G}_{\bar{\mathbb{Q}}_{\ell}}\sigma)$ generated by objects coming from $\textnormal{Rep}_{\bar{\mathbb{Q}}_\ell}(\hat{G}_{\bar{\mathbb{Q}}_{\ell}})$. This idea is made more explicit in \textit{loc.cit}.

The above strategy does not carry over in our situation. First of all, it is not clear how to a suitable notion of categorical trace of To calculate the left $\sigma$-twisted categorical trace of $\textnormal{Rep}_{\mathbb{F}_{\ell}}(\hat{G}_{\mathbb{F}_{\ell}})$. To accomplish this, we need to appeal to a more general construction of the tensor product for finitely cocomplete categories. In addition, the correspondence category $\textnormal{P}^{\cc}(\sht^{\loc},\mathbb{Z}_{\ell})$ is not finitely cocomplete any more and the desired maps $\mathcal{S}$ therefore cannot be obtained by the universal property of the categorical trace. One possible way to overcome this difficulty is to upgrade $\textnormal{P}^{\cc}(\sht^{\loc},\mathbb{Z}_{\ell})$ to a higher category.

Instead of pursuing this idea, we take a more concrete approach. We note that there is a natural isomorphism 
$$
\textnormal{Hom}_{\textnormal{Coh}^{\hat{G}_{\Lambda}}(\hat{G}_{\Lambda}\sigma)}(\widetilde{\Lambda_1},\widetilde{\Lambda_2})\cong \textnormal{Hom}_{\hat{G}_{\Lambda}}(\Lambda_1,\mathcal{O}_G\otimes \Lambda_2),
$$
where $\mathcal{O}_G$ denotes the regular representation of $\hat{G}_{\Lambda}$. By the Peter-Weyl theorem, $\mathcal{O}_G$ admits a filtration with associated graded $\oplus_{\lambda}W(\lambda)\otimes S(\lambda^{*})$ where $W(\lambda)$ denotes the Schur module of $\hat{G}_{\Lambda}$. For any $W\in\textnormal{Rep}(\hat{G})$, and any $\mathbf{a}\in \hg(V,W\otimes S(\lambda^*)\otimes W)$, we use the integral coefficient geometric Satake equivalence discussed in \S 2.4 to construct a cohomological correspondence on restricted local Hecke stacks of $G\times G$. The maps $\mathcal{S}_{V,W}$ are constructed by first pulling this cohomological correspondence back to a cohomological correspondence on restricted local Hecke stacks and then pulling it back to  a cohomological correspondence on restricted local Shtukas.

\subsection{Plan of the paper}
We briefly discuss the organization of the paper. In \S 2, we recall the geometric properties of mixed characteristic affine Grassmannians and the integral coefficient geometric Satake equivalence following \cite{zhu2017affine} and \cite{yu2019integral}. In \S 3 and \S 4, we summarize the geometry of local Hecke stacks and moduli of local Shtukas. In addition, we define categories of perverse sheaves on these two stacks and study the cohomological correspondences in these categories. Results appeared in these two sections are proved in \cite{xiao2017cycles}, and we refer to \textit{loc.cit} for detailed proofs. Section 5 is devoted to study the cohomological correspondences between perverse sheaves on the stacks of restricted local Shtukas which come from the integral geometric Satake equivalence. We derive the key ingredient of the main theorem. We apply the results in previous sections to study the cohomological correspondences between mod $p$ fibres of different Hodge type Shimura varieties and prove our main theorem in \S6.

\subsection{Notations}
We fix notations for later use.

Let $F$ be a mixed characteristic local ring with ring of integers $\mathcal{O}$ and residue field $k=\mathbb{F}_q$.  We write $\sigma$ for the arithmetic Frobenius of $\mathbb{F}_q$. For any $k$-algebra $R$, its ring of Witt vectors is denoted by 
$$
W(R)=\{(r_0,r_1,\cdots)\mid r_i\in R\}.
$$
We denote by $W_h(R)$ the ring of truncated Witt vectors of length $h$. Write $\textbf{Aff}^{\textnormal{pf}}_k$ for the category of perfect $k$-algebras. For any $R\in \textbf{Aff}^{\textnormal{pf}}_k$,  we know that $W_h(R) = W(R)/p^hW(R)$. We define the ring of Witt vectors in $R$ with coefficient in $\mathcal{O}$ as
$$
W_{\mathcal{O}}(R):=W(R)\hat{\otimes}_{W(k)} \mathcal{O}:=\varprojlim_nW_{\mathcal{O},n}(R),\text{ and }W_{\mathcal{O},n}(R)=W(R)\otimes_{W(k)} \mathcal{O}/\varpi^n.
$$
We define the $n$-th unit disk, formal unit disk, and formal punctured unit disk to be
$$
D_{n,R}:=\textnormal{Spec}W_{\mathcal{O},n}(R),\textnormal{  }D_{R}:=\textnormal{Spec}W_{\mathcal{O}}(R),\text{ } D_{R}^{\times}:=\textnormal{Spec}W_{\mathcal{O}}(R)[1/\varpi],
$$
respectively.

Let $L$ be the completion of the maximal unramified extension of $F$. Denote by $\mathcal{O}_L$ its ring of integers, and we fix a uniformizer $\varpi\in\mathcal{O}_L$. 
We will assume $G$ to be an unramified reductive group scheme over $\mathcal{O}$. We denote by $T$ the abstract Cartan subgroup of $G$. Let $S \subset T$ denote the maximal split subtorus. In the case where $G$ is a split reductive group, we will choose a Borel subgroup $B\subset G$ over $\mathcal{O}$ and a split maximal torus $T\subset B$. When we need to embed $T$ (or $S$) into $G$ as a (split) maximal torus, we will state it explicitly.

 Let $\mathbb{X}_{\bullet}$ denote the coweight lattice of $T$ and $\mathbb{X}^{\bullet}$ the weight lattice. Let $\Delta\subset\mathbb{X}^{\bullet}$ (resp. $\Delta^{\vee}\subset\mathbb{X}_{\bullet}$) the set of roots  (resp. coroots). A choice of the Borel subgroup $B\subset G$ determines the semi-group of dominant coweights $\mathbb{X}_{\bullet}^{+}\subset \mathbb{X}_{\bullet}$ and the set of positive roots $\Delta_+\subset \Delta$. In fact, $\mathbb{X}_{\bullet}^+$ and $\Delta_+$ are both independent of the choice of $B$. The
$q$-power (arithmetic) Frobenius $\sigma$ acts on $(\mathbb{X}^{\bullet},\Delta,\mathbb{X}_{\bullet},\Delta^{\vee})$ preserving $\mathbb{X}_{\bullet}^+$. The Langlands dual group of $G$ is denoted by $\hat{G}$.

Let $2\rho\in \mathbb{X}^{\bullet}$ be the sum of all positive roots. Define the partial order $``\leq"$ on $\mathbb{X}_{\bullet}$ to be such that $\lambda\leq \mu$ if and only if  $\mu-\lambda$  equals a non-negative integral linear combination of positive coroots.
For any $\mu\in\mathbb{X}_{\bullet}$, denote $\varpi^{\mu}$ by the image of $\mu$ under the composition of maps
$$
\mathbb{G}_m\rightarrow T\subset G.
$$

Let $H$ be a reductive group over a field $K$. We write $H_{\textnormal{ad}}$ for its adjoint group, $H_{\textnormal{der}}$ for its derived group, and $H_{\textnormal{sc}}$ for the simply connected cover of $H_{\textnormal{ad}}$. 
Let $T\subset H$ be a maximal torus, we denote by $T_{\textnormal{ad}}$ its image in the quotient $G_{\textnormal{ad}}$. We also write $T_{\textnormal{sc}}$ for the preimage of $T$ in $G_{\textnormal{sc}}$.

We denote by $\mathcal{E}^0$ the trivial $G$-torsor. For any perfect $k$-algebra $R$, the arithmetic Frobenius $\sigma$ induces an automorphism $\sigma\otimes \textnormal{id}$ of the $D_R$. Let $\mathcal{E}$ be a $G$-torsor over $D_R$, and we denote the $G$-torsor $(\sigma\otimes \textnormal{id})^*\mathcal{E}$ by  $^{\sigma}\mathcal{E}$.

Let $X$ be an algebraic space over $k$. We write $\sigma_X$ for the absolute Frobenius morphism of $X$. We denote by $X^{p^{\-\infty}}:=\varprojlim_{\sigma_X}X$ the perfection of $X$.

Let $\ell\neq p$ be a prime number. Fix a half Tate twist $\mathbb{Z}_{\ell}(1/2)$. We write $\langle  d \rangle:=[d](d/2)$. Let $X$ and $Y$ be two algebraic stacks which are perfectly of finitely presentation in the opposite category of perfect $k$-algebras. For each $f:X\rightarrow Y$ being a perfectly smooth morphism of relative dimension $d$, we write $f^{\star}:=f^*\langle  d \rangle$.

Throughout the paper, we write $\Lambda$ for $\mathbb{Z}_{\ell}$ and $\mathbb{F}_\ell$ and $E$ for $\mathbb{Z}_{\ell}$ unless otherwise stated. For stacks $X_1$, $X_2$, and perverse sheaves $\mathcal{F}_i\in \textnormal{P}(X_i,\Lambda)$, we sometimes write the space of cohomological correspondences $\cc_X((X_1,\mathcal{F}_1),(X_2,\mathcal{F}_2))$ as $\cc_X(\mathcal{F}_1,\mathcal{F}_2)$ for simplicity.

Let $X$ be a stack, we denote by $\omega_X\in D_b^c(X,E)$ the dualizing sheaf of $X$  in the bounded derived category of sheaves on $X$. For a perfect pfp algebraic space (cf.\cite[A.1.7]{xiao2017cycles}) $X$ over $k$, we denote by $\textnormal{H}^{BM}_i(X_{\bar{k}}):=\textnormal{H}^{-i}(X_{\bar{k}},\omega_x(-i/2))$ the $i$th Borel-Moore homology of $X_{\bar{k}}$.

\subsection{Acknowledgements}
I am very grateful to my advisor Xinwen Zhu for introducing me to the beautiful theory of moduli of Shtukas to me and providing  numerous helpful comments and discussions while I was working on this project.

\section{The Integral Coefficient Geometric Satake Equivalence in Mixed Characteristic}
We briefly recall the geometric properties of the mixed characteristic affine Grassmannians and the integral coefficient geometric Satake equivalence. We refer to \cite{zhu2017affine} and \cite{yu2019integral} for more details. 
\subsection{The geometry of mixed characteristic affine Grassmannians}
\begin{definition}
The mixed characteristic \textit{affine Grassmannian} $\textnormal{Gr}_G$ of $G$ over $k$ is defined as the presheaf over $\textbf{Aff}^{\textnormal{pf}_k}$
\begin{equation*}
\textnormal{Gr}_G(R)=\left\{(\mathcal{E},\phi)\bigg|
\begin{aligned}
&	\mathcal{E}\rightarrow D_{R} \text{ is a } G\text{-torsor, and } \\
& \phi:\mathcal{E}\mid_{D_{R}^{\times}}\simeq \mathcal{E}^{0}\mid_{D_{R}^{\times}} \\
\end{aligned}
\right\}.
\end{equation*}
\end{definition}
In addition to this moduli interpretation, it can be understood as a homogeneous space. We define the $p$-adic \textit{jet group} (resp.\textit{loop group}) as the presheaf  $L^+G(R):=G(W_{\mathcal{O}}(R))$ (resp. $LG:=G(W_{\mathcal{O}}(R[1/\varpi])$) over the $\textbf{Aff}^{\textnormal{pf}}_k$. We define $L^nG(R):=G(W_{\mathcal{O},n}(R))$ to be the $n$-\textit{th jet group}. It is represented by the perfection of of an algebraic $k$-group and we have $L^+G=\varprojlim_n L^nG$. We define the $n$-\textit{th congruence group} $L^+G^{(n)}(R):=\textnormal{ker}(L^+G\rightarrow L^nG(R))$.
Then $\textnormal{Gr}_G$ can be realized as the fpqc quotient space $[LG/L^+G]$.  It is clear that $LG\rightarrow \textnormal{Gr}_G$ is an $L^{+}G$-torsor and $L^{+}G$ naturally acts on $\textnormal{Gr}_G$. We can form the twisted product which we also call the \textit{convolution product} of affine Grassmannians  
$$
\textnormal{Gr}_G\tilde{\times}\textnormal{Gr}_{G}:=LG\times^{L^{+}G}\textnormal{Gr}_G:=[LG\times \textnormal{Gr}_G/L^{+}G],
$$
where $L^{+}G$ acts on $LG\times \textnormal{Gr}_G$ anti-diagonally as $g^{+}\cdot ([g_1],[g_2]):=([g_1(g^{+})^{-1}],[g^{+}g_2])$. It also admits a moduli interpretation as follows
\begin{equation*}
\textnormal{Gr}_G\tilde{\times}\textnormal{Gr}_{G}(R)=\left\{(\mathcal{E}_1,\mathcal{E}_2,\beta_1,\beta_2)\bigg|
\begin{aligned}
&	\mathcal{E}_1,\mathcal{E}_2 \text{ are } G-\text{torsors on }D_{R}, \text{and} \\
& \beta_1:\mathcal{E}_1\mid_{D_{R}^{\times}}\simeq \mathcal{E}^0\mid_{D_{R}^{\times}}, \beta_2:\mathcal{E}_2\mid_{D_{R}^{\times}}\simeq \mathcal{E}_1\mid_{D_{R}^{\times}} \\
\end{aligned}
\right\}.
\end{equation*}
We define the convolution morphism
$$
m:\textnormal{Gr}_G\tilde{\times}\textnormal{Gr}_{G}\longrightarrow \textnormal{Gr}_G, 
$$
such that 
$$
(\mathcal{E}_1,\mathcal{E}_2,\beta_1,\beta_2)\longmapsto (\mathcal{E}_2,\beta_1\beta_2).
$$
\\
The celebrated theorem of Bhatt-Scholze \cite{bhatt2017projectivity} shows that the functor $\textnormal{Gr}_{G}$ is representable by an inductive limit of perfections
of projective varieties.

For each algebraically closed field $K$ containing $\bar{k}$, the moduli interpretation of affine Grassmannians assigns to each $(\mathcal{E},\phi)\in \textnormal{Gr}_G(K)$ an element in $G(W_{\mathcal{O}}(K))\backslash G(\mathcal{O}(K)[1/\varpi])/G(\mathcal{O}(K))\simeq \mathbb{X}_{\bullet}^{+}$ by the Cartan decomposition. We denote the corresponding dominant coweight by $\textnormal{Inv}(\phi)$ and call it the \textit{relative position} of $(\mathcal{E},\phi)$. 
\begin{definition}
For each $\mu\in \mathbb{X}_{\bullet}^{+}$, we define 
\begin{itemize}
    \item [(1)] the (spherical) \textit{Schubert variety}
    $$
\textnormal{Gr}_{\mu}:=\{(\mathcal{E},\phi)\in \textnormal{Gr}_G\mid \textnormal{Inv}(\phi)\leq \mu\},
$$ 
    \item[(2)] the \textit{Schubert cell}
    $$
\mathring{\textnormal{Gr}_{\mu}}:=\{(\mathcal{E},\phi)\in \textnormal{Gr}_G\mid \textnormal{Inv}(\phi)= \mu\}.
$$ 
\end{itemize} 

\end{definition}
The Schubert cell $\mathring{\textnormal{Gr}_{\mu}}$ is the orbit of $\varpi^{\mu}$ in $\textnormal{Gr}_G$ under the $L^+G$-action. It
is the perfection of a quasi-projective smooth variety of dimension $(2\rho,\mu)$, and $L^+G$ acts on it via a finite type quotient $L^mG$. Taking the Zariski closure of $\mathring{\textnormal{Gr}_{\mu}}$ gives $\textnormal{Gr}_{\mu}$. Moreover, for any $\mu,\nu\in\mathbb{X}^+_{\bullet}$, $\mu\leq\nu$ if and only if $\textnormal{Gr}_{\mu}\subseteq \textnormal{Gr}_{\nu}$.

\subsection{The Satake category}
We know that $\pi_0(\textnormal{Gr}_G\otimes\bar{k})\simeq \pi_1(G)$ (cf. \cite[Proposition 1.21]{zhu2017affine}). The affine Grassmannian $\textnormal{Gr}_G\otimes \bar{k}$ has the decomposition into connected components
$$
\textnormal{Gr}_G\otimes \bar{k}=\sqcup_{\xi\in\pi_1(G)}\textnormal{Gr}_{\xi}.
$$
Recall our discussion in \S $2.1$. We have 
$$
\textnormal{Gr}_{\xi}=\varinjlim_{\mu\in \xi}\textnormal{Gr}_{\mu},
$$
where $\mu\in \xi$ means that the natural map $\mathbb{X}_{\bullet}\rightarrow \pi_1(G)$ sends $\mu$ to $\xi$. The connecting morphism $i_{\mu,\nu}:\textnormal{Gr}_{\mu}\rightarrow \textnormal{Gr}_{\nu}$ is a closed embedding if $\mu\leq \nu$. For $m\leq m'$ be integers such that $L^+G$ acts on $\textnormal{Gr}_{\mu}$ through $L^mG$ and $L^{m'}G$, there is a canonical equivalence
$$
P_{L^mG\otimes\bar{k}}(\textnormal{Gr}_{\mu},\Lambda)\cong P_{L^{m'}G\otimes\bar{k}}(\textnormal{Gr}_{\mu},\Lambda).
$$
We define the category of $L^+G\otimes \bar{k}$-equivariant $\Lambda$-coefficient perverse sheaves on $\textnormal{Gr}_G\otimes \bar{k}$ as
\begin{align*}
\textnormal{P}_{L^+G\otimes\bar{k}}(\textnormal{Gr}_G\otimes\bar{k},\Lambda):= \bigoplus_{\xi\in\pi_1(G)}\textnormal{P}_{L^+G\otimes\bar{k}}(\textnormal{Gr}_{\xi},\Lambda),\\
\textnormal{P}_{L^+G\otimes\bar{k}}(\textnormal{Gr}_{\xi},\Lambda):=\varinjlim_{(\mu,m)}P_{L^mG\otimes \bar{k}}(\textnormal{Gr}_{\mu},\Lambda).
\end{align*}
Here, the limit is taken over the pairs $\{(\mu,m)\mid\mu\in\xi,m\textnormal{ is large enough} \}$ with partial order given by $(\mu,m)\leq (\mu',m)$ if $\mu\leq\mu'$ and $m\leq m'$. The connecting morphism is the composition
$$
\textnormal{P}_{L^mG\otimes\bar{k}}(\textnormal{Gr}_{\mu},\Lambda)\cong \textnormal{P}_{L^{m'}G\otimes\bar{k}}(\textnormal{Gr}_{\mu},\Lambda)\xrightarrow[]{i_{\mu,\mu'}^*}\textnormal{P}_{L^{m'}G\otimes\bar{k}}(\textnormal{Gr}_{\mu'},\Lambda)
$$
which is a fully faithful embedding.
We also call this category the \textit{Satake category} and sometimes denote it by $\textnormal{Sat}_{G,\Lambda}$ for simplicity. We denote by $\textnormal{IC}_{\mu}$ for each $\mu\in \mathbb{X}_{\bullet}^{+}$ the \textit{intersection cohomology sheaf} on $\textnormal{Gr}_{\leq\mu}$. Its restriction to each open strata $\textnormal{Gr}_{\mu}$ is constant and in particular, $\textnormal{IC}_{\mu}\mid_{\textnormal{Gr}_{\mu}}\simeq  \Lambda[(2\rho,\mu)]$.

We recall the the monoidal structure in $\textnormal{Sat}_{G,\Lambda}$ defined by Lusztig's convolution of sheaves.
Consider the following diagram 
$$
\textnormal{Gr}_G\times \textnormal{Gr}_G\stackrel{p}{\longleftarrow}LG\times \textnormal{Gr}_G\stackrel{q}{\longrightarrow} \textnormal{Gr}_G\tilde{\times} \textnormal{Gr}_G\stackrel{m}{\longrightarrow} \textnormal{Gr}_G,
$$
where $p$ and $q$ are natural projections. We define for any $\mathcal{A}_1,\mathcal{A}_2\in P_{L^+G}(\textnormal{Gr}_G,\Lambda)$, 
$$\mathcal{A}\star \mathcal{A}_2:=Rm_!(\mathcal{A}_1\tilde{\boxtimes}\mathcal{A}_2),$$
where $\mathcal{A}_1\tilde{\boxtimes}\mathcal{A}_2\in P_{L^+G}(\textnormal{Gr}_G\tilde{\times} \textnormal{Gr}_G,\Lambda)$ is the unique object such that 
$$
q^*(\mathcal{A}_1\tilde{\boxtimes}\mathcal{A}_2)\simeq p^*({^{p}\textnormal{H}^{0}}(\mathcal{A}_1\boxtimes\mathcal{A}_2)).
$$ 
We remark that unlike the construction in $P_{L^+G}(\textnormal{Gr}_G,\bar{\mathbb{Q}}_{\ell})$, taking the $0$-th perverse cohomology in the above definition is necessary. This is because when we work with $\mathbb{Z}_{\ell}$-sheaves, the sheaf $\mathcal{A}_1\boxtimes\mathcal{A}_2$ may not be perverse. In fact, $\mathcal{A}_1\boxtimes\mathcal{A}_2$ is perverse if one of $\textnormal{H}^{*}(\mathcal{A}_i)$ is a flat $\mathbb{Z}_{\ell}$-module. For more details, please refer to \cite[Lemma 4.1]{mirkovic2007geometric}. A priori, the convolution product $\mathcal{A}_1\star\mathcal{A}_2$ is a complex of of $L^+G$ equivariant sheaves on $\textnormal{Gr}_G$. It is the semi-smallness of the convolution morphism $m$ (cf.\cite{zhu2017affine}) that guarantees $\mathcal{F}_1\star \mathcal{F}_2$ to be a perverse sheaf. 

\subsection{More on the convolution Grassmannians}
We recall the definition and geometry of the convolution product of multiple copies of affine Grassmannians and related constructions. A detailed exposition maybe found in \cite[\S3.1]{zhu2017affine}.

Let $\mb=(\mu_1,\mu_2,\cdots,\mu_n)$ be a sequence of dominant coweights.  
\begin{definition}
We define the convolution product of $\textnormal{Gr}_{\mu_1},\textnormal{Gr}_{\mu_2},\cdots, \textnormal{Gr}_{\mu_n}$ as the presheaf $\textnormal{Gr}_{\mb}$ which classifies isomorphism classes of modifications of $G$-torsors over $D_{R}$ 
\begin{equation}
\begin{tikzcd}[sep=small]
\mathcal{E}_n \arrow[r,dashed,"\beta_1"] & \mathcal{E}_{n-1} \arrow[r,dashrightarrow,"\beta_{n-1}"]& \cdots \arrow[r,dashrightarrow,"\beta_1"] & \mathcal{E}_0=\mathcal{E}^0,
\end{tikzcd}
\end{equation}
where $\textnormal{Inv}(\beta_i)\leq \mu_i$.
\end{definition}
We define $\textnormal{Gr}_{\leq\mb}^{(\infty)}$ as the $L^+G$-torsor over $\textnormal{Gr}_{\leq\mb}$ which classifies a point in $\textnormal{Gr}_{\leq\mb}$ as $(2.1)$ together with an isomorphism $\mathcal{E}_n\simeq \mathcal{E}^0$. For any integer $n$, we define $\textnormal{Gr}_{\leq\mb}^{(n)}$ to be the $L^nG$-torsor over $\textnormal{Gr}_{\leq\mb}$ which classifies a point in $\textnormal{Gr}_{\leq\mb}$ together with an isomorphism $\mathcal{E}_n\mid_{D_{n,R}}\simeq \mathcal{E}^{0}\mid_{D_{n,R}}$. For any $m<n$, there is an isomorphism 
$$
\textnormal{Gr}_{\leq\mb}\simeq \textnormal{Gr}_{\leq\mu_1,\cdots,\mu_m}\tilde{\times} \textnormal{Gr}_{\leq\mu_{m+1},\cdots,\mu_n}:= \textnormal{Gr}_{\leq\mu_1,\cdots,\mu_m}^{(\infty)}\times ^{L^+G}\textnormal{Gr}_{\leq\mu_{m+1},\cdots,\mu_n}.
$$

Many of the constructions in later sections make use of the following lemma \cite[Lemma 3.1.7]{zhu2017affine}:
\begin{lemma}
For any sequence of dominant coweights $\mb=(\mu_1,\mu_2,\cdots,\mu_n)$, there exists a non-negative integer $m$, such that for any non-negative integer $n$, the action of $L^{m+n}G$ on $\textnormal{Gr}_{\leq\mb}$ is trivial. In other words, the natural action of $L^+G$ on $\textnormal{Gr}_{\leq\mb}$ factors through the finite type quotient $L^{m+n}G$.
\end{lemma}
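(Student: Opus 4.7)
The plan is to prove this by induction on the length $r$ of $\mb=(\mu_1,\ldots,\mu_r)$. The stated form is equivalent to the claim that there exists $m$ with $L^+G^{(m)}$ acting trivially on $\textnormal{Gr}_{\leq\mb}$: since $L^+G^{(m+n)}\subseteq L^+G^{(m)}$ and $L^+G/L^+G^{(m+n)}\cong L^{m+n}G$, the ``for all $n\geq 0$'' clause follows automatically.

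For $r=1$, \S2.1 already records that each Schubert cell $\mathring{\textnormal{Gr}_{\mu'}}$ admits an $L^+G$-action factoring through some finite-type quotient $L^{m_{\mu'}}G$. Since $\textnormal{Gr}_{\leq\mu_1}$ is a finite union of such cells for $\mu'\leq\mu_1$, the maximum of the corresponding $m_{\mu'}$ suffices for the base case.

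For the inductive step, I would exploit the twisted-product decomposition
\[\textnormal{Gr}_{\leq\mb}\;\simeq\;\textnormal{Gr}_{\leq\mu_1}^{(\infty)}\times^{L^+G}\textnormal{Gr}_{\leq\mu_2,\ldots,\mu_r}\]
from \S2.3, with $L^+G$ acting by left multiplication on the first factor. Representing a point as $[x,y]$ with $x\in LG$ of relative position $\leq\mu_1$, the anti-diagonal relation $(x,y)\sim(xh^{-1},hy)$ lets me rewrite
\[g\cdot [x,y]\;=\;[gx,y]\;=\;[x,(x^{-1}gx)\cdot y]\]
upon choosing $h=x^{-1}gx$; this is legitimate precisely when $x^{-1}gx\in L^+G$, in which case $g$ acts trivially on $[x,y]$ iff $x^{-1}gx$ stabilizes $y$. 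By the inductive hypothesis there is $m'$ such that $L^+G^{(m')}$ acts trivially on $\textnormal{Gr}_{\leq\mu_2,\ldots,\mu_r}$, so the step reduces to the uniform conjugation estimate
\[x^{-1}\,L^+G^{(m)}\,x\;\subseteq\;L^+G^{(m')}\qquad\text{for every }x\in\textnormal{Gr}_{\leq\mu_1}^{(\infty)}.\]

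The main obstacle is this uniform bound, which I plan to prove by fixing a closed embedding $G\hookrightarrow\mathrm{GL}_N$ over $\mathcal{O}$ compatible with congruence filtrations and applying the Cartan decomposition $x=h_1\varpi^{\mu'}h_2$ with $h_i\in L^+G$ and $\mu'\leq\mu_1$. Since $L^+G^{(m)}$ is normal in $L^+G$, conjugation by $h_1,h_2$ preserves it; conjugation by $\varpi^{\mu'}$ shifts the valuation of the $(i,j)$-matrix-entry in $\mathrm{GL}_N$ by $\mu'_j-\mu'_i$, which is bounded in absolute value by a constant $C(\mu_1)$ depending only on the weights of $\mu_1$. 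Taking $m\geq m'+C(\mu_1)$ then yields the desired containment and closes the induction.
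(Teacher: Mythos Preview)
The paper does not prove this lemma; it simply records it as \cite[Lemma 3.1.7]{zhu2017affine} and moves on. Your argument supplies a correct proof where the paper defers to a citation.

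The induction on the length of $\mb$ via the twisted-product decomposition is the natural route, and the reduction of the inductive step to the uniform conjugation estimate $x^{-1}L^+G^{(m)}x\subseteq L^+G^{(m')}$ for $x$ of relative position $\leq\mu_1$ is exactly right. One small remark: your Cartan-decomposition argument is phrased pointwise over algebraically closed fields, so strictly speaking you are checking that the action map and the projection $L^+G^{(m)}\times\textnormal{Gr}_{\leq\mb}\to\textnormal{Gr}_{\leq\mb}$ agree on geometric points; this suffices because the source is (pro-)reduced and the target separated. Alternatively, the matrix estimate via $G\hookrightarrow\mathrm{GL}_N$ can be made functorial over any perfect $R$ by observing directly that $x\in\textnormal{Gr}_{\leq\mu_1}^{(\infty)}(R)$ forces both $x$ and $x^{-1}$ to have entries with $\varpi$-pole order bounded by a constant depending only on $\mu_1$, which yields $x^{-1}(1+\varpi^mA)x\in 1+\varpi^{m-C}M_N(W_{\mathcal{O}}(R))$ without invoking Cartan at all.
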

We will call such an integer $m$ a $\mb$-\textit{large} integer. We also call a pair of non-negative integers $(m,n)$ ($m=\infty$ allowed) to be $\mb$-\textit{large} if $m-n$ is a $\mb$-large integer.

Replacing $\textnormal{Gr}_{\leq\mu_i}$ by $\textnormal{Gr}_{\mu_i}$, we can similarly define $\textnormal{Gr}_{\mb}:=\textnormal{Gr}_{\mu_1}\tilde{\times}\cdots\tilde{\times} \textnormal{Gr}_{\mu_n}$. It admits a stratification
\begin{equation}
\textnormal{Gr}_{\leq\mb}=\cup_{\mb'\leq \mb}\textnormal{Gr}_{\mb'},
\end{equation}
where $\mb'\leq \mb$ means $\mu_i'\leq\mu_i$ for each $i$. 

As in \cite{zhu2017affine}, we let $|\mb|:=\sum\mu_i$. Then the convolution map induces the following morphism
$$
m:\textnormal{Gr}_{\leq\mb}\longrightarrow \textnormal{Gr}_{\leq |\mb|},
$$
such that
$$
(\mathcal{E}_i,\beta_i)\longmapsto (\mathcal{E}_n,\beta_1\cdots\beta_n).
$$
Let $\nb$ be another sequence of dominant coweights. We define the following stack 
$$
\textnormal{Gr}_{\mb\mid\nb}^{0}:=\textnormal{Gr}_{\leq\mb}\times_{m_{\mb},\textnormal{Gr}_G,m_{\nb}} \textnormal{Gr}_{\leq\nb}.
$$
Write the natural projections from $\textnormal{Gr}_{\mb\mid\nb}^{0}$ to $\textnormal{Gr}_{\leq\mb}$ and $\textnormal{Gr}_{\leq\nb}$ as $h^{\leftarrow}_{\mb}$ and $h^{\rightarrow}_{\nb}$, respectively. We call the following diagram 
\begin{equation}
\begin{tikzcd}
\textnormal{Gr}_{\leq\mb} \arrow[r,leftarrow, "h^{\leftarrow}_{\mb}"] & \Gmbnb^0  \arrow[r,"h^{\rightarrow}_{\nb}"] & \textnormal{Gr}_{\leq\nb}
\end{tikzcd}
\end{equation}
the \textit{Satake correspondence}.
\begin{definition}
An irreducible component of $\Gmbnb^0$
of dimension $(\rho,\vert\mb\vert+\vert\nb\vert)$  is called a \textit{Satake cycle}. Denote the set of Satake cycles of $\Gmbnb^0$ by $\mathbb{S}_{\mb\mid\nb}$. For $\mathbf{a}\in \mathbb{S}_{\mb\mid\nb}$, write $\Gmbnb^{0,\mathbf{a}}$ for the Satake cycle labelled by $\mathbf{a}$.
\end{definition}
It is clear that $\Gmbnb^0\cong \textnormal{Gr}_{\nb\mid\mb}^0$, and we conclude that 
\begin{equation}
\mathbb{S}_{\mb\mid\nb}=\mathbb{S}_{\nb\mid\mb}.
\end{equation}

\subsection{The geometric Satake equivalence}
We state the geometric Satake equivalence \cite{yu2019integral} which is an indispensable ingredient of our construction of the Jacquet-Langlands correspondence. 
\begin{theorem}
There is an equivalence of monoidal categories 
\begin{equation}
    \textnormal{Sat}_G:\ \textnormal{Rep}_{\Lambda}(\hat{G}_{\Lambda})\simeq \textnormal{Sat}_{G,\Lambda},
\end{equation}
where $\textnormal{Rep}_{\Lambda}(\hat{G}_{\Lambda})$ denotes the category of $\Lambda$-representations of $\hat{G}_{\Lambda}$ which are finitely generated over $\Lambda$. In particular, for any sequence $\mb$ of dominant coweights, the Satake equivalence assigns $V_{\mb}$ the tensor product of highest weight representations $V_{\mu_i}$ corresponding to $\mu_i$ to the convolution product $\textnormal{IC}_{\mb}$ of intersection cohomology sheaf $\textnormal{IC}_{\mu_i}$.
\end{theorem}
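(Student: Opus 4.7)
The plan is to follow the Mirković--Vilonen strategy adapted to the mixed characteristic/Witt vector affine Grassmannian and to integral coefficients. First, I would establish that the convolution product $\star$ endows $\textnormal{Sat}_{G,\Lambda}$ with a monoidal structure. The key point is that the convolution morphism $m:\textnormal{Gr}_G\tilde{\times}\textnormal{Gr}_G\to \textnormal{Gr}_G$ is stratified semi-small when restricted to the $\mb$-bounded pieces $\textnormal{Gr}_{\leq\mb}$, which forces $\mathcal{A}_1\star\mathcal{A}_2$ to remain perverse. For $\Lambda=\mathbb{Z}_\ell$ the extra subtlety is that $\mathcal{A}_1\boxtimes\mathcal{A}_2$ may not itself be perverse because of torsion phenomena (cf.\ the cited \cite{mirkovic2007geometric}), so the truncation ${}^{p}\textnormal{H}^{0}$ in the definition of $\mathcal{A}_1\tilde\boxtimes\mathcal{A}_2$ is genuinely needed; I would then verify that this truncation is compatible with associativity by reducing to the flat case via $\mathbb{Z}_\ell\to\mathbb{F}_\ell$ base change.

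Second, I would construct a fiber functor. The natural candidate is the total cohomology functor $H^*:\textnormal{Sat}_{G,\Lambda}\to \textnormal{Mod}_{\Lambda}^{\textnormal{fg}}$. Faithfulness and exactness require showing that $H^*(\textnormal{IC}_\mu)$ is torsion-free and concentrated in the expected degrees; this is done by computing stalks of $\textnormal{IC}_\mu$ via the semi-infinite orbit decomposition and showing that the associated Mirković--Vilonen cycles give a basis of $H^{*}_c$ of the attractive loci. To upgrade $H^*$ to a \emph{symmetric} monoidal functor, I would introduce the Beilinson--Drinfeld-type two-point version of the Witt vector affine Grassmannian which interpolates between $\textnormal{Gr}_G\times\textnormal{Gr}_G$ and $\textnormal{Gr}_G$, and use the nearby cycles functor on it to produce the commutativity constraint via fusion, exactly as in \cite{mirkovic2007geometric}. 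The existence of such a global version in mixed characteristic is made possible by the geometry developed in \cite{zhu2017affine}.

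Third, I would apply a Tannakian reconstruction result over $\Lambda$ to the rigid abelian symmetric monoidal $\Lambda$-linear category $(\textnormal{Sat}_{G,\Lambda},\star,H^{*})$, producing a flat affine group scheme $\hat{G}'_\Lambda$ over $\Lambda$ with $\textnormal{Sat}_{G,\Lambda}\simeq \textnormal{Rep}_\Lambda(\hat{G}'_\Lambda)$. To identify $\hat{G}'_\Lambda$ with the Langlands dual $\hat{G}_\Lambda$, I would introduce weight functors via hyperbolic localization along cocharacters of $T$: for each $\chi\in\mathbb{X}_\bullet$, the $\chi$-attracting MV slice produces a grading of $H^*$ giving a canonical maximal torus of $\hat{G}'_\Lambda$ whose character lattice is $\mathbb{X}^\bullet$. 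Reading off the root datum from the subquotients $H^*(\textnormal{IC}_\mu)$ for minuscule and small $\mu$ then yields the desired identification.

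The main obstacle is the integral coefficient issue: one has to establish parity/torsion-freeness of $H^*(\textnormal{IC}_\mu)$ in the mixed characteristic setting in order for $H^*$ to be a faithful exact tensor functor and for the Tannakian formalism to produce a flat (rather than merely fppf) group scheme. This reduces to an explicit count of MV cycles in the Witt vector affine Grassmannian, which is the technical heart of \cite{yu2019integral} (and of Scholze's parallel perfectoid approach). A secondary but nontrivial obstacle is reconciling the ${}^{p}\textnormal{H}^{0}$ truncation in the convolution product with the fusion commutativity constraint, which I would handle by showing that nearby cycles commute with the perverse truncation for the relevant families, again using flatness of the stratawise restrictions of $\textnormal{IC}_\mu$.
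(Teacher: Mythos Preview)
The paper does not prove this theorem. It is stated in \S2.4 as a quotation of the author's earlier work \cite{yu2019integral} (with Scholze's perfectoid approach mentioned as a parallel result), and is used throughout the paper as a black box. There is therefore no proof in the paper to compare your proposal against; your outline is really a sketch of what \cite{yu2019integral} must do, not of anything in the present paper.

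That said, your sketch contains a substantive issue worth flagging. In the second step you propose to obtain the commutativity constraint via fusion on a ``Beilinson--Drinfeld-type two-point version of the Witt vector affine Grassmannian'', asserting that \cite{zhu2017affine} supplies such an object. It does not. The Witt vector affine Grassmannian lives over a single perfect point with no curve along which to move legs, so the naive fusion picture is unavailable; this is precisely one of the main difficulties in mixed characteristic. Zhu in \cite{zhu2017affine} obtains the commutativity constraint by a different route (an anti-involution argument combined with a comparison to the equal-characteristic Satake category), and \cite{yu2019integral} must likewise work around the absence of a genuine BD deformation. The fusion mechanism you describe is available in Scholze's $B_{\mathrm{dR}}^+$-Grassmannian framework over the Fargues--Fontaine curve, but that is a different model from the one used here. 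The remaining parts of your outline (semi-smallness of $m$, the weight functors via semi-infinite orbits and MV cycles, Tannakian reconstruction over $\Lambda$, identification of the root datum) are in the right spirit for the approach of \cite{yu2019integral}.
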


\begin{remark}
As explained in \cite[\S 5.5]{zhu2016introduction}, the Galois group $\textnormal{Gal}(\bar{\mathbb{F}}_p/\mathbb{F}_p)$ acts on the Satake category $
\textnormal{Sat}_{G,\Lambda}$ by tensor auto-equivalences. It in turn induces an action of $\textnormal{Gal}(\bar{\mathbb{F}}_p/\mathbb{F}_p)$ on $\hat{G}$ which preserves $(\hat{G},\hat{B},\hat{T})$. Let $V\in\textnormal{Rep}_{E}(\hat{G})$ and $\gamma\in \textnormal{Gal}(\bar{\mathbb{F}}_p/\mathbb{F}_p)$. We write $^{\gamma}V$ for the representation 
$$
\hat{G}\xrightarrow{\gamma^{-1}} \hat{G}\rightarrow \textnormal{GL}_{E}(V)
$$
of $\hat{G}$.
\end{remark}

For three sequences of dominant weight $\mu_{1\bullet}$, $\mu_{2\bullet}$, and $\mu_{3\bullet}$ the following lemma is an immediate consequence of Theorem $2.6$.
\begin{corollary}
We have the following natural isomorphism 
\begin{equation}
    \hg(V_{\mu_{i\bullet}},V_{\mu_{j\bullet}})\cong \cc_{\textnormal{Gr}^0_{\mu_{i\bullet}\mid\mu_{j\bullet}}}((\textnormal{Gr}_{\leq\mu_{i\bullet}},\textnormal{IC}_{\mu_{i\bullet}}),(\textnormal{Gr}_{\leq\mu_{j\bullet}},\textnormal{IC}_{\mu_{j\bullet}})), \end{equation}
such that
the natural composition on the left hand 
$$
\hg(V_{\mu_{1\bullet}},V_{\mu_{2\bullet}})\otimes \hg(V_{\mu_{2\bullet}},V_{\mu_{3\bullet}})\rightarrow \hg(V_{\mu_{1\bullet}},V_{\mu_{3\bullet}})
$$
is compatible with the composition of cohomological correspondences on the right hand side
\begin{align*}
& \cc_{\textnormal{Gr}^0_{\mu_{1\bullet}\mid\mu_{2\bullet}}}((\textnormal{Gr}_{\leq\mu_{1\bullet}},\textnormal{IC}_{\mu_{1\bullet}}),(\textnormal{Gr}_{\leq\mu_{2\bullet}},\textnormal{IC}_{\mu_{2\bullet}}))\otimes \cc_{\textnormal{Gr}^0_{\mu_{2\bullet}\mid\mu_{3\bullet}}}((\textnormal{Gr}_{\leq\mu_{2\bullet}},\textnormal{IC}_{\mu_{2\bullet}}),(\textnormal{Gr}_{\leq\mu_{3\bullet}},\textnormal{IC}_{\mu_{3\bullet}}))\\
\rightarrow & \cc_{\textnormal{Gr}^0_{\mu_{1\bullet}\mid\mu_{3\bullet}}}((\textnormal{Gr}_{\leq\mu_{1\bullet}},\textnormal{IC}_{\mu_{1\bullet}}),(\textnormal{Gr}_{\leq\mu_{3\bullet}},\textnormal{IC}_{\mu_{3\bullet}}))
\end{align*}
which is obtained by pushing forward the cohomological correspondences along the natural map
$$
Gr^0_{\mu_{1\bullet}\mid\mu_{2\bullet}}\times_{\textnormal{Gr}_{\mu_{2\bullet}}} Gr^0_{\mu_{2\bullet}\mid\mu_{3\bullet}}\rightarrow Gr^0_{\mu_{1\bullet}\mid\mu_{3\bullet}}.
$$
In addition, there is a canonical isomorphism
\begin{equation}
    \textnormal{Hom}_{\textnormal{P}(\textnormal{Gr}_G)}(m_{\mb*}\textnormal{IC}_{\mb},m_{\nb*}\textnormal{IC}_{\nb})\cong \textnormal{H}^{\textnormal{BM}}_{(2\rho,\vert\mb\vert+\vert\nb\vert)}(\textnormal{Gr}_{\mb\mid \nb}^0).
\end{equation}
\end{corollary}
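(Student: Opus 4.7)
The plan is to derive both displayed isomorphisms directly from the geometric Satake equivalence (Theorem 2.6), combined with adjunction and proper base change, and then to trace through these identifications to verify the composition compatibility.

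First I would reduce (2.8) to a statement about perverse sheaves on $\textnormal{Gr}_G$. Since $\textnormal{Sat}_G$ is a tensor equivalence sending $V_{\mu_i}$ to $\textnormal{IC}_{\mu_i}$, the tensor product $V_{\mb}=\bigotimes_i V_{\mu_i}$ corresponds to the iterated convolution, which by the construction in \S2.3 is $m_{\mb*}\textnormal{IC}_{\mb}$; semi-smallness of $m_{\mb}$ keeps this pushforward perverse. Fully faithfulness then gives a canonical isomorphism
$$\hg(V_{\mu_{i\bullet}},V_{\mu_{j\bullet}})\cong \textnormal{Hom}_{\textnormal{P}(\textnormal{Gr}_G)}\bigl(m_{\mu_{i\bullet}*}\textnormal{IC}_{\mu_{i\bullet}},m_{\mu_{j\bullet}*}\textnormal{IC}_{\mu_{j\bullet}}\bigr).$$
I would then apply proper base change along the Cartesian square whose corners are $\Gmbnb^0$, $\textnormal{Gr}_{\leq\mb}$, $\textnormal{Gr}_{\leq\nb}$, $\textnormal{Gr}_G$ and whose maps are $h^{\leftarrow}_{\mb}$, $h^{\rightarrow}_{\nb}$, $m_{\mb}$, $m_{\nb}$ (the latter two being proper); this identifies $m_{\mb}^{!}m_{\nb*}\textnormal{IC}_{\nb}$ with $h^{\leftarrow}_{\mb *}(h^{\rightarrow}_{\nb})^{!}\textnormal{IC}_{\nb}$. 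Combining with the $(m_{\mb*},m_{\mb}^{!})$- and $((h^{\leftarrow}_{\mb})^{*},h^{\leftarrow}_{\mb *})$-adjunctions rewrites the right-hand side as $\textnormal{Hom}((h^{\leftarrow}_{\mb})^{*}\textnormal{IC}_{\mb},(h^{\rightarrow}_{\nb})^{!}\textnormal{IC}_{\nb})$, which is by definition $\cc_{\Gmbnb^0}((\textnormal{Gr}_{\leq\mb},\textnormal{IC}_{\mb}),(\textnormal{Gr}_{\leq\nb},\textnormal{IC}_{\nb}))$; this yields (2.8).

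For (2.9), I would use that $\textnormal{IC}_{\mb}$ restricts to the shifted constant sheaf $\Lambda[(2\rho,|\mb|)]$ on the open dense stratum of $\textnormal{Gr}_{\leq\mb}$. After incorporating shifts and the half-Tate-twist conventions of the notation section, $(h^{\leftarrow}_{\mb})^{*}\textnormal{IC}_{\mb}$ is shifted-constant on the preimage of this open in $\Gmbnb^0$, while $(h^{\rightarrow}_{\nb})^{!}\textnormal{IC}_{\nb}$ is the dualizing complex shifted by $(2\rho,|\nb|)$ (up to twist); hence the Hom space unwinds to $\textnormal{H}^{\textnormal{BM}}_{(2\rho,|\mb|+|\nb|)}(\Gmbnb^0)$, with contributions from lower-dimensional strata of $\Gmbnb^0$ vanishing for dimension reasons. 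The Hom space is then spanned by fundamental classes of the top-dimensional irreducible components, which are precisely the Satake cycles of Definition 2.4. For the compatibility with composition, I would verify that the composition of cohomological correspondences---obtained by pulling back, $!$-pulling back, and pushing forward along the natural map $\textnormal{Gr}^0_{\mu_{1\bullet}\mid\mu_{2\bullet}}\times_{\textnormal{Gr}_{\leq\mu_{2\bullet}}}\textnormal{Gr}^0_{\mu_{2\bullet}\mid\mu_{3\bullet}}\to\textnormal{Gr}^0_{\mu_{1\bullet}\mid\mu_{3\bullet}}$---corresponds under the adjunctions above to honest composition of Homs in $\textnormal{P}(\textnormal{Gr}_G)$; the tensor structure of $\textnormal{Sat}_G$ then transports this to composition in $\hg$.

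The main obstacle is the $\mathbb{Z}_\ell$-coefficient case: while the argument is essentially formal (and well known in the $\bar{\mathbb{Q}}_\ell$-coefficient setting of \cite{xiao2017cycles}), working over $\Lambda=\mathbb{Z}_\ell$ requires the integral Satake equivalence of \cite{yu2019integral} to guarantee that the convolution IC sheaves, their external products, and their pushforwards remain perverse with flat stalks, so that the above adjunction and base-change isomorphisms genuinely preserve perversity. A secondary difficulty is the careful bookkeeping of shifts and half-Tate twists needed to match the exact degree $(2\rho,|\mb|+|\nb|)$ in (2.9).
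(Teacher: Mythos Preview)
Your proposal is correct and, in fact, considerably more detailed than the paper's own proof, which consists entirely of the sentence ``The lemma can be proved exactly as \cite[Corollary 3.4.4]{zhu2017affine}, and we refer to \textit{loc.cit} for details of the proof.'' Your outline---geometric Satake to pass to perverse sheaf Homs, then adjunction plus proper base change across the Cartesian square defining $\textnormal{Gr}^0_{\mb\mid\nb}$ to reach the correspondence space, then a stratification/dimension argument for the Borel--Moore identification---is exactly the argument in the cited reference, so there is no genuine divergence in method.

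One remark on the concerns you raise at the end. The isomorphism (2.8) is purely formal (adjunction and proper base change) and needs no special input about integral coefficients beyond the existence of the Satake equivalence; your worry about perversity of external products and pushforwards is not actually invoked in that step. For (2.9) the dimension-reason vanishing you sketch is the right idea, but the cleanest route over $\mathbb{Z}_\ell$ is not via stalks of IC sheaves on boundary strata (which can indeed be complicated integrally) but rather via the semi-smallness bound $\dim\textnormal{Gr}^0_{\mb\mid\nb}\leq(\rho,|\mb|+|\nb|)$ together with the perverse degree bounds on $(h^{\leftarrow})^*\textnormal{IC}_{\mb}$ and $(h^{\rightarrow})^!\textnormal{IC}_{\nb}$; this forces the Hom to concentrate in the top Borel--Moore degree without appealing to the decomposition theorem. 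That refinement aside, your plan matches what the reference does.
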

\begin{proof}
The lemma can be proved exactly as \cite[Corollary 3.4.4]{zhu2017affine}, and we refer to $\textit{loc.cit}$ for details of the proof.
\end{proof}

\section{Local Hecke Stacks}
We review the definition of local Hecke stacks and study their geometric properties following \cite{xiao2017cycles}.
\begin{definition}
Let $\mb=(\mu_1,\mu_2,\cdots,\mu_n)$ be a sequence of dominant coweights of $G$. The \textit{local Hecke stack} $\hk^{\loc}_{\mb}$ is defined as the moduli problem which assigns to each perfect $k$-algebra $R$ the groupoid of chains of modifications of $G$-torsors
\begin{equation}
\begin{tikzcd}[]
\mathcal{E}_n \arrow[r,dashed] & \mathcal{E}_{n-1} \arrow[r,dashrightarrow]& \cdots \arrow[r,dashrightarrow] & \mathcal{E}_0
\end{tikzcd}
\end{equation}
over $D_{R}$ of relative positions $\leq\mu_n,\cdots,\leq \mu_1$, respectively.
\end{definition}
It may also be understood as the homogeneous space $[L^+G\backslash \textnormal{Gr}_{\mu_{\leq\bullet}}]$. Similarly, we define $$\hk^{0,\loc}_{\mb\mid\nb}:=[L^+G\backslash \textnormal{Gr}_{\mb\mid\nb}^0]$$
as the stack which classifies for each perfect $k$-algebra $R$ the rectangles of modifications
$$
\begin{tikzcd}[]
\mathcal{E}_n\arrow[r,dashed] \arrow[d,equal] &  \cdots \arrow[r,dashrightarrow] & \mathcal{E}_0\arrow[d,equal] \\
 \mathcal{E}'_{m} \arrow[r,dashed]& \cdots \arrow[r,dashrightarrow] & \mathcal{E}'_0 ,
\end{tikzcd}
$$
of $G$-torsors over $D_R$ with modifications in the upper (resp. lower) row bounded by $\mb$ (resp. $\nb$).

Taking quotient of the Satake correspondence $(2.3)$ by $L^+G$, we get the \textit{Satake correspondence} for local Hecke stacks,
\begin{equation}
    \begin{tikzcd}
    \hk^{\loc}_{\mb}\arrow[r,leftarrow, "h_{\mb}^{\leftarrow}"] & \Hmbnb^{0,\loc} \arrow[r,"h_{\nb}^{\rightarrow}"] & \hk^{\loc}_{\nb}.
    \end{tikzcd}
\end{equation}
It is clear from the definition that these stacks are not of finite type, thus we need their finite dimensional quotient to apply the $\ell$-adic formalism.\begin{definition}
 For a sequence of dominant coweights $\mb=(\mu_1,\mu_2,\cdots,\mu_n)$, choose a $\mb$-large integer $m$, and we define  the $m$-\textit{restricted local Hecke stack} to be the stack
 $$
 \hk_{\mb}^{\loc(m)}:=[L^mG\backslash \textnormal{Gr}_{\leq\mu}].
 $$
\end{definition}
Similarly, choose $m$ large enough for $\mb$ and $\nb$, for example, $m$ is taken to be $(\mb,\nb)$-large, and we define $\hk_{\mb\mid\nb}^{0,\loc(m)}:=[L^mG\backslash \textnormal{Gr}_{\mb\mid\nb}^0]$. We have the \textit{Satake correspondence} on restricted local Hecke stacks,
\begin{equation}
     \begin{tikzcd}
    \hk_{\mb}^{\loc(m)}\arrow[r,leftarrow, "h_{\mb}^{\leftarrow}"] & \Hmbnb^{\textnormal{0,loc}(m)} \arrow[r,"h_{\nb}^{\rightarrow}"] & \hk_{\nb}^{\loc(m)}.
    \end{tikzcd}
\end{equation}
\subsection{Torsors over the local Hecke stacks}
Let $\mathbb{B}L^+G$ (resp. $BL^mG$ for $m\in\mathbb{Z}_{\geq 0}$) denote the moduli stack which classifies
for every perfect $k$-algebra $R$ the groupoid of $G$-torsors over $D_R$ (resp. $D_{m,R}$). For non-negative integers $m_1\leq m_2$, the natural quotient maps
$$
\begin{tikzcd}[sep=huge]
L^+G\arrow[r,"\res_{m_2}:=\res^{\infty}_{m_2}"] & L^{m_2}G \arrow[r, "\res^{m_2}_{m_1}"] & L^{m_1}G
\end{tikzcd}
$$
induce restriction maps between stacks
\begin{equation}
\begin{tikzcd}[sep=huge]
\mathbb{B}L^+G\arrow[r,"\res_{m_2}:=\res^{\infty}_{m_2}"] & \mathbb{B}L^{m_2}G \arrow[r, "\res^{m_2}_{m_1}"] & \mathbb{B}L^{m_1}G
\end{tikzcd}.
\end{equation}
Clearly, for any non-negative integers $m_1\leq m_2\leq m_3$, we have $\res^{m_2}_{m_1}\circ\res^{m_3}_{m_2}=\res^{m_3}_{m_1}$, where $m_3$ can be taken to be $\infty$.

Let $\mb = (\mu_1,\mu_2,\cdots, \mu_n)$ be a sequence of dominant coweights. We have natural morphisms 
$$
t_{\leftarrow},t_{\rightarrow}:\hk_{\mb}^{\loc}\rightarrow \mathbb{B}L^+G
$$
which send $(3.1)$ to the torsors $\mathcal{E}_n$ and $\mathcal{E}_0$, respectively.

For restricted local Hecke stacks, we choose a pair of $\mb$-large integers $(m,n)$. Then the natural maps 
$$
\begin{tikzcd}
\hk_{\mb}^{\loc(m)}\simeq [L^mG\backslash \textnormal{Gr}_{\leq\mb}^{(n)}/L^nG] \arrow[r,"t_{\leftarrow}\times t_{\rightarrow}"] & \mathbb{B}L^nG\times \mathbb{B}L^mG
\end{tikzcd}
$$
induce the $L^mG$-torsor
\begin{equation}
\textnormal{Gr}_{\leq\mb}\rightarrow \hk^{\loc(m)}_{\mb},
\end{equation}
and the $L^nG$-torsor
\begin{equation}
    [L^mG\backslash \textnormal{Gr}_{\leq\mb}]\rightarrow [L^{m+n}G\backslash \textnormal{Gr}_{\leq\mb}].
\end{equation}
Following the notations in \cite{xiao2017cycles}, we denote the two torsors by $\mathcal{E}_{\leftarrow}$ and $\mathcal{E}_{\rightarrow}$, respectively.
For any pairs of $\mb$-large integers $(m_1,n_1)$ and $(m_2,n_2)$ such that $m_1\leq m_2$ and $n_1\leq n_2$, denote the natural restriction map of restricted local Hecke stacks as 
\begin{equation}
    \res^{m_2}_{m_1}:\hk^{\loc(m_2)}_{\mb}\rightarrow \hk^{\loc(m_1)}_{\mb}.
\end{equation}
It is compatible with the restriction maps in $(3.4)$ in the sense that the following diagram is commutative
$$
\begin{tikzcd}[row sep=huge]
\mathbb{B}L^{n_2}G\arrow[r,leftarrow,"t_{\leftarrow}"] \arrow[d,"\res^{n_2}_{n_1}"] & \hk^{\loc(m_2)}_{\mb} \arrow[r,"t_{\rightarrow}"] \arrow[d,"\res^{m_2}_{m_1}"] & \mathbb{B}L^{m_2}G \arrow[d,"\res^{m_2}_{m_1}"] \\
\mathbb{B}L^{n_1}G \arrow[r,leftarrow,"t_{\leftarrow}"] & \hk^{\loc(m_1)}_{\mb} \arrow[r,"t_{\rightarrow}"] & \mathbb{B}L^{m_1}G.
\end{tikzcd}
$$
Let $\mb$ and $\nb$ be two sequences of dominant coweights. Choose non-negative integers $m_1,m_2,n$ such that $(m_1,m_2)$ is $\mb$-large and $(m_2,n)$ is $\nb$-large. Then we have the following isomorphism
\begin{equation}
    [L^{m_1}G\backslash Gr^{(n)}_{\leq\mb,\nb}]\cong\hk^{\loc (m_1)}_{\mb,\nb}\cong \hk^{\loc (m_2)}_{\nb}\times_{t_{\rightarrow},\mathbb{B}L^{m_2}G,\res^{m_1}_{m_2}\circ t_{\leftarrow}}\hk^{\loc (m_1)}_{\mb},
\end{equation}
which induces the following perfectly smooth morphisms
\begin{equation}
    \hk^{\loc (m_1)}_{\mb,\nb}\rightarrow  \hk^{\loc (m_2)}_{\nb}\times \hk^{\loc (m_1)}_{\mb}\xrightarrow{\textnormal{id}\times\res^{m_1}_{m_1-m_2}} \hk^{\loc (m_2)}_{\nb}\times \hk^{\loc (m_2-m_1)}_{\mb}.
\end{equation}

\subsection{Perverse sheaves on the moduli of local Hecke stacks}
Let $m_1\leq m_2$ be two $\mb$-large integers. The natural (twisted) pullback functor
$$
\res_{m_1}^{m_2}:= (\res^{m_2}_{m_1})^{\star}:=(\res^{m_2}_{m_1})^{*}[d](d/2):\textnormal{P}(\hk^{\loc(m_1)}_{\mb})\rightarrow \textnormal{P}(\hk^{\loc(m_2)}_{\mb})
$$
is an equivalence of categories.
We define the category of perverse sheaves on the local Hecke stack as
$$
\textnormal{P}(\hk_{\bar{k}}^{\loc},\Lambda):=\bigoplus_{\xi\in\pi_1(G)} \textnormal{P}(\hk_{\xi}^{\loc},\Lambda), \textnormal{P}(\hk_{\xi}^{\loc},\Lambda):=\varinjlim_{(\mu,m)\in\xi\times\mathbb{Z}_{\geq 0}}\textnormal{P}(\hk^{\loc(m)}_{\mu},\Lambda).
$$
Here the connecting morphism in the definition of $\textnormal{P}(\hk^{\loc},\Lambda)$ is the fully faithful embedding 
$$
\begin{tikzcd}[sep=huge]
\textnormal{P}(\hk^{\loc(m_1)}_{\mu_1},\Lambda) \arrow[r,"\res^{m_2}_{m_1}"] & \textnormal{P}(\hk^{\loc^{m_1}}_{\mu_1},\Lambda) \arrow[r,"i_{\mu_1,\mu_2*}"] & \textnormal{P}(\hk^{\loc(m_2)}_{\mu_2},\Lambda).
\end{tikzcd}
$$
Finally, via descent, there is a natural equivalence of categories $\textnormal{P}(\hk^{\loc(m)}_{\mu},\Lambda)\cong \textnormal{P}_{L^mG}(\textnormal{Gr}_{\leq\mu},\Lambda)$, which induces an equivalence
$\textnormal{P}(\hk_{\bar{k}}^{\loc},\Lambda)\cong \textnormal{P}_{L^+G\otimes \bar{k}}(\textnormal{Gr}_G\otimes \bar{k},\Lambda)$.

\section{Moduli of Local Shtukas}
In this section, we define moduli of local Shtukas , and its restricted version, on which we also construct various correspondences. Via these constructions, we define the category of $\Lambda$-coefficient perverse sheaves on the moduli of local Shtukas and their cohomological correspondences. In the rest of this thesis, we will make use of the theory of cohomological correspondences between perfect schemes and perfect pfp algebraic spaces. We refer to  \cite[Appendix A]{xiao2017cycles} for a detailed discussion.
\begin{definition}
 Let $\mb=(\mu_1,\mu_2,\cdots,\mu_n)$ be a sequence of dominant coweights. The \textit{moduli of local Shtukas} $\sht^{\loc}_{\mb}$ classifies for each perfect $k$-algebra $R$ sequences of modifications of $G$-torsors 
$$
\begin{tikzcd}[]
\mathcal{E}_n \arrow[r,dashed] & \mathcal{E}_{n-1} \arrow[r,dashed]& \cdots \arrow[r,dashrightarrow] & \mathcal{E}_0\cong {^{\sigma}}\mathcal{E}_n
\end{tikzcd}
$$
over $D_{R}$ of relative positions $\leq\mu_n,\cdots,\leq \mu_1$ respectively.
\end{definition}
It follows from the definition that
$$
\sht^{\loc}_{\mb}\cong \hk^{\loc}_{\mb}\times_{t_{\leftarrow}\times t_{\rightarrow},\mathbb{B}L^+G\times \mathbb{B}L^+G, \textnormal{id}\times \sigma}\mathbb{B}L^+G.
$$
There is a natural forgetful map $\psi^{\loc}:\sht^{\loc}_{\mb}\rightarrow \hk^{\loc}_{\mb}$ which forgets the isomorphisms $\mathcal{E}_0\cong {^{\sigma}}\mathcal{E}_n$. One can define the stack  
$$
\sht^{0,\loc}_{\mb\mid \nb}
$$ 
which classifies for each perfect $k$-algebra $R$ the following rectangle of modifications 
$$
\begin{tikzcd}[]
\mathcal{E}_n\arrow[r,dashed] \arrow[d,equal] &  \cdots \arrow[r,dashrightarrow] & \mathcal{E}_0\cong {^{\sigma}}\mathcal{E}_n \arrow[d,equal] \\
 \mathcal{E}'_{m} \arrow[r,dashed]& \cdots \arrow[r,dashrightarrow] & \mathcal{E}_{0}' \cong {^{\sigma}}\mathcal{E}_m',
\end{tikzcd}
$$
of $G$-torsors over $D_R$ with modifications in the upper (resp. lower) row bounded by $\mb$ (resp. $\nb$).
We get the \textit{Satake correspondence} for moduli of local Shtukas
$$
\begin{tikzcd}[column sep=huge]
\sht^{\loc}_{\mb} \arrow[r,leftarrow,"s_{\mb}^{\leftarrow}"] &  \sht^{0,\loc}_{\mb\mid \nb} \arrow[r,"s_{\mb}^{\rightarrow}"] & 
\sht^{\loc}_{\nb}
\end{tikzcd}.
$$
We introduce the partial Frobenius morphism between the moduli of local Shtukas which will play an important role in later constructions. 
\begin{definition}
Let $\mb=(\mu_1,\mu_2,\cdots,\mu_n)$ be a sequence of dominant coweights of $G$. We define the \textit{partial Frobenius morphism} to be
\begin{equation}
\begin{tikzcd}
F_{\mu_{\bullet}}:\sht^{\loc}_{\mu_1,\cdots,\mu_n}\arrow[r] & \sht^{\loc}_{\sigma(\mu_n),\mu_1,\cdots,\mu_{n-1}} 
\end{tikzcd}
\end{equation}
$$
\begin{tikzcd}[sep=small]
(\mathcal{E}_n \arrow[r,dashed] & \cdots \arrow[r,dashrightarrow] & \mathcal{E}_0\cong {^{\sigma}}\mathcal{E}_n )\arrow[r,mapsto] &(\mathcal{E}_{n-1} \arrow[r,dashed] & \cdots \arrow[r,dashrightarrow] & {^{\sigma}}\mathcal{E}_n\arrow[r,dashrightarrow] & {^{\sigma}}\mathcal{E}_{n-1}).
\end{tikzcd}
$$
\end{definition}
\begin{definition}
Let $\mb$ and $\nb$ be two sequences of dominant coweights. For each perfect $k$-algebra $R$, the prestack $\Smbnb^{\loc}$ classifies the following commutative diagram of modifications of $G$-torsors over $D_{R}$
$$
\begin{tikzcd}[row sep=huge]
\mathcal{E}_n\arrow[r,dashed] \arrow[d,dashed,"\beta"] &  \cdots \arrow[r,dashrightarrow] & \mathcal{E}_0\cong {^{\sigma}}\mathcal{E}_n \arrow[d,dashed,"\beta^{\sigma}"] \\
 \mathcal{E}'_{m} \arrow[r,dashed]& \cdots \arrow[r,dashrightarrow] & \mathcal{E}'_0 \cong {^{\sigma}}\mathcal{E}_m',
\end{tikzcd}
$$
where the top (resp. bottom) row defines an $R$-point of $\sht_{\mb}^{\loc}$ (resp. $\sht_{\nb}^{\loc}$). Let $\overleftarrow{h}^{\loc}_{\mb}$ (reps. $\overrightarrow{h}^{\loc}_{\nb}$) denote the morphism which maps the above commutative rectangle to its upper (resp. lower) row. We define the \textit{Hecke correspondence} of local Shtukas to be to following diagram
\begin{equation}
\begin{tikzcd}[column sep=huge]
\sht_{\mb}^{\loc} \arrow[r,leftarrow,"\overleftarrow{h}^{\loc}_{\mb}"] & \Smbnb^{\loc} \arrow[r, "\overrightarrow{h}^{\loc}_{\nb}"] & \sht_{\nb}^{\loc}.
 \end{tikzcd}
    \end{equation}
\end{definition}
If in addition, the relative position of $\beta$ is bounded by $\lambda$, we get a closed sub-prestack $\sht^{\lambda,\loc}_{\mb\mid\nb}$. In particular, if $\lambda=0$, the Hecke correspondence $(4.2)$ reduces to the Satake correspondence.

The Hecke correspondence can be considered as the composition of two Satake correspondences and the cohomological correspondence given by the partial Frobenius morphism. More precisely, we recall \cite[Lemma 5.2.14]{xiao2017cycles}.
\begin{lemma}
Let $\mb$ and $\nb$ be two sequences of dominant coweights. Choose $\lambda$ to be a dominant coweight such that $\lambda\geq  |\mb| +\sigma(\lambda) $ or $\lambda\geq |\mb|+\nu$, then we have the following commutative diagram of prestacks
$$
\begin{tikzcd}[]
 &  & \sht^{\theta,\loc}_{\mb|\nb} \arrow[dl] \arrow[dr] &  & \\
& \sht_{\mb|(\sigma(\theta^*),\lambda)}^{0,\loc} \arrow[dl,"s^{\leftarrow}_{\mb}"'] \arrow[d,"s_{\sigma(\theta^*),\lambda}^{\rightarrow}"] & & \sht_{(\lambda,\theta^*)|\nb}^{0,\loc} \arrow[dr,"s_{\nb}^{\rightarrow}"] \arrow[d,"s_{\lambda,\theta^*}^{\leftarrow}"] &\\
\sht_{\mb}^{\loc} & \sht_{\sigma(\theta^*),\lambda} \arrow[rr,"F_{\lambda,\theta^*}^{-1}"] & & \sht_{\lambda,\theta^*}^{\loc}& \sht_{\nb}^{\loc}. \\
\end{tikzcd}
$$
In addition, the pentagon in the middle is a Cartesian square when composing  $F_{\lambda,\theta^*}$ with $s_{\sigma(\theta^*),\lambda}^{\rightarrow}$.
\end{lemma}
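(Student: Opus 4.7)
The plan is to prove the lemma by unpacking the moduli descriptions of all five prestacks appearing in the diagram and then exhibiting the maps as natural operations on the underlying data. A point of $\sht^{\theta,\loc}_{\mb\mid\nb}$ over a perfect $k$-algebra $R$ consists of two chains of modifications of $G$-torsors over $D_R$, bounded by $\mb$ in the top row and by $\nb$ in the bottom row, together with a vertical modification $\beta:\mathcal{E}_n\dashrightarrow\mathcal{E}'_m$ of relative position bounded by $\theta$, such that its $\sigma$-twist $\beta^{\sigma}:\mathcal{E}_0\to\mathcal{E}'_0$ makes the rectangle commute. A point of $\sht^{0,\loc}_{\mb\mid(\sigma(\theta^*),\lambda)}$ consists of the top row of the above, a chain of two further modifications $\mathcal{F}_2\dashrightarrow\mathcal{F}_1\dashrightarrow \mathcal{F}_0$ with the $\lambda$-bound on the first step and the $\sigma(\theta^*)$-bound on the second step, together with the Shtuka identification $\mathcal{F}_0\cong{}^{\sigma}\mathcal{F}_2$ and equality of the two Shtukas at the final torsor.

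First I would construct the maps from $\sht^{\theta,\loc}_{\mb\mid\nb}$ to each of $\sht^{0,\loc}_{\mb\mid(\sigma(\theta^*),\lambda)}$ and $\sht^{0,\loc}_{(\lambda,\theta^*)\mid\nb}$. For the left map, given a point of $\sht^{\theta,\loc}_{\mb\mid\nb}$, I use the inverse modification $\beta^{-1}:\mathcal{E}'_m\dashrightarrow\mathcal{E}_n$ (of relative position $\theta^*$) and the chosen $\lambda$ with $\lambda\geq|\mb|+\sigma(\theta^*)$ (or the analogous bound in the statement) to insert a factorization of the chain ending at $^{\sigma}\mathcal{E}_n$ through an intermediate $\lambda$-bounded and $\sigma(\theta^*)$-bounded pair, using the $\sigma$-twist of $\beta$. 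The right map is analogous with the roles of $\mb$ and $\nb$, and of $\theta$ and $\theta^*$, swapped. Commutativity of the outer diagram follows because the images under $s^{\leftarrow}_{\mb}$ and $s^{\rightarrow}_{\nb}$ are, by construction, the top and bottom rows of the original rectangle, and the two middle downward maps are designed so that applying $F_{\lambda,\theta^*}$ (which cyclically shifts, converting the last coweight $\theta^*$ into $\sigma(\theta^*)$ at the front) identifies the two intermediate Shtukas.

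The heart of the argument is to show that the middle pentagon is Cartesian once $F_{\lambda,\theta^*}$ is composed with $s^{\rightarrow}_{\sigma(\theta^*),\lambda}$. I would do this by writing down an explicit inverse on the level of $R$-points: given a point of $\sht^{0,\loc}_{\mb\mid(\sigma(\theta^*),\lambda)}$ and a point of $\sht^{0,\loc}_{(\lambda,\theta^*)\mid\nb}$, together with the identification required to glue them after applying the partial Frobenius, one recovers a $\theta$-bounded modification $\beta$ between the top rows of the two Shtukas by reading the $\theta^*$-bounded edge of the second datum and inverting it. The auxiliary bound on $\lambda$ is exactly what ensures that the inserted $\lambda$-bounded step exists uniquely and that the factorization of the vertical modification through the intermediate torsor is well-defined, so the two constructions are mutually inverse. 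The main technical obstacle is bookkeeping the positions of $\sigma$-twists and duals to verify the bound conditions at each vertex; once the moduli are unpacked carefully the Cartesian property follows from the fact that a chain of modifications can be uniquely reconstructed from a compatible splitting together with its endpoints, which is precisely the content of the fiber product definition $\hk^{\loc}_{\mb,\nb}\cong\hk^{\loc}_{\nb}\times_{\mathbb{B}L^+G}\hk^{\loc}_{\mb}$ recorded in $(3.8)$.
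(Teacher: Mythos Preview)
The paper does not give its own proof of this lemma; it merely cites \cite[Lemma 5.2.14]{xiao2017cycles}. Your approach---unpacking the moduli descriptions of all the prestacks involved and exhibiting the maps as explicit operations on chains of modifications---is exactly the route taken in Xiao--Zhu, so in that sense you are reproducing the argument the paper defers to.

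Two small corrections to your sketch. First, the role of the bound on $\lambda$ is not to guarantee that an inserted $\lambda$-step ``exists uniquely''; rather, the intermediate modification is built \emph{explicitly} as a composite of $\beta$ (or ${}^{\sigma}\beta$) with one of the given rows, and the inequality on $\lambda$ is what certifies that this composite has relative position $\leq\lambda$ so that the resulting point actually lands in the target prestack. There is no existence or uniqueness issue to resolve. Second, your closing appeal to $(3.8)$ is not quite the right ingredient for the Cartesian claim: that isomorphism concerns local Hecke stacks, whereas the pentagon here is Cartesian because the partial Frobenius $F_{\lambda,\theta^*}$ is an isomorphism of prestacks (so composing it with $s^{\rightarrow}_{\sigma(\theta^*),\lambda}$ does not change the fiber product), after which the identification reduces to matching the $(\lambda,\theta^*)$-shtuka data on the two sides and reading off $\beta$ from the $\theta^*$-edge. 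With these adjustments your outline is correct.
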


\subsection{Moduli of restricted local Shtukas}
Let $\mb=0$ or more generally, a central cocharacter, $\sht_{\mb}^{\loc}\simeq \mathbb{B}G(
\mathcal{O})$ which is  not perfectly of finite presentation as a prestack. Thus to apply the $\ell$-adic formalism, it is desirable to study the following finite type approximation of $\sht^{\loc}_{\mb}$.
\begin{definition}
Let $\mb$ be a sequence of dominant coweights and $(m,n)$ a pair of $\mb$-large integers. We define the moduli stack $\sht^{\loc(m,n)}_{\mb}$ of $(m, n)$-\textit{restricted
local iterated shtukas} as the stack that classifies for every perfect $k$-algebra $R$, 
\begin{itemize}
    \item [(1)] an $R$-point of $\hk^{\loc (m)}_{\mb}$,
    \item [(2)] an isomorphism 
    $$
    \Psi:{^{\sigma}}(\mathcal{E}_{\leftarrow}\mid_{D_{n,R}})\simeq (\mathcal{E}_{\rightarrow}\mid_{D_{m,R}})\mid_{D_{n,R}}
    $$
    of $L^n G$-torsos over $\textnormal{Spec} R$, where $\mathcal{E}_{\leftarrow}$ and $\mathcal{E}_{\rightarrow}$ are defined in $(3.5)$ and $(3.6)$, respectively.
\end{itemize}
\end{definition}
The above definition gives a canonical isomorphism 
$$\sht_{\mb}^{\loc (m,n)}\cong \hk_{\mb}^{\loc (m)}\times_{t_{\leftarrow}\times \res^{m}_n\circ t_{\rightarrow},\mathbb{B}L^nG\times \mathbb{B}L^nG,\textnormal{id}\times \sigma}\mathbb{B}L^nG.
$$
The natural forgetful morphism $\psi^{\loc (m,n)}:\sht^{\loc (m,n)}_{\mb}\rightarrow \hk^{\loc (m)}_{\mb}$ is a perfectly smooth
morphism of relative dimension $n\dim G$. For two sequences of dominant coweights $\mb$, $\nb$, we define $\Smbnb^{\loc (m,n)}$ to be the stack which classifies for each perfect $k$-algebra $R$, an $R$-point of $\Hmbnb^{\loc (m,n)}$ together with an isomorphism ${^{\sigma}}(\mathcal{E}_{\leftarrow}\mid_{D_{n,R}})\simeq (\mathcal{E}\mid_{D_{m,R}})\mid_{D_{n,R}}$. Let $(m_1
, n_1)$ and $(m_2, n_2)$ be two pairs of $\mb$-large integers such that $m_1\leq m_2$ and $n_1\leq n_2$. We define the restriction morphism 
\begin{equation}
    \res^{m_2,n_2}_{m_1.n_1}:\sht^{\loc (m_2,n_2)}_{\mb}\rightarrow \sht^{\loc (m_1,n_1)}_{\mb}
\end{equation}
as the composition of the following morphisms
\begin{align*}
\res^{m_2,n_2}_{m_1.n_1}: & \sht_{\mb}^{\loc (m_2,n_2)}\cong  \hk_{\mb}^{\loc (m_2)}\times_{t_{\leftarrow}\times \res^{m_2}_{n_2}\circ t_{\rightarrow},\mathbb{B}L^{n_2}G\times \mathbb{B}L^{n_2}G,\textnormal{id}\times \sigma}\mathbb{B}L^{n_2}G \\
& \xrightarrow[]{\res^{m_2}_{m_1}\times\res^{n_2}_{n_1} } \hk_{\mb}^{\loc (m_1)}\times_{t_{\leftarrow}\times \res^{m_1}_{n_1}\circ t_{\rightarrow},\mathbb{B}L^{n_1}G\times \mathbb{B}L^{n_1}G,\textnormal{id}\times \sigma}\mathbb{B}L^{n_1}G\\
 & \cong \sht_{\mb}^{\loc (m_1,n_1)}.
\end{align*}
For $(m_2,n_2)=(\infty,\infty)$, we write 
$\res^{m_2,n_2}_{m_1.n_1}$ as $\res_{m_1,n_1}$ for simplicity. 
For three pairs of $\mb$-large integers $(m_i,n_i)$ such that $m_1\leq m_2\leq m_3$ and $n_1\leq n_2 \leq n_3$, we have
\begin{equation}
    \res^{m_2,n_2}_{m_1,n_1}\circ \res^{m_3,n_3}_{m_2,n_2}=\res^{m_3,n_3}_{m_1,n_1}.
\end{equation}
The Satake correspondences for restricted local Hecke stacks and the Satake correspondences for restricted local Shtukas are related by the restriction and forgetful morphisms. We summarize this relation in following diagram
\begin{equation}
\begin{tikzcd}[sep=small]
\sht_{\mb}^{\loc(m_2,n_2)} \arrow[dd,"\psi^{\loc(m_2,n_2)}", near end] \arrow[dr,"\res_{m_1,n_1}^{m_2,n_2}"]  &  & \sht_{\mb|\nb}^{0,\loc(m_2,n_2)} \arrow[ll] \arrow[rr] \arrow[dr,"\res_{m_1,n_1}^{m_2,n_2}"] \arrow[dd,dashrightarrow,"\psi^{\loc(m_2,n_2)}", near end] &  & \sht_{\nb}^{\loc(m_2,n_2)} \arrow[dr,"\res_{m_1,n_1}^{m_2,n_2}"] \arrow[dd,dashrightarrow,"\phi^{\loc(m_2,n_2)}", near end] & \\
 & \sht_{\mb}^{\loc(m_1,n_1)} \arrow[dd,"\psi^{\loc(m_1,n_1)}", near end] &  & \sht_{\mb|\nb}^{0,\loc(m_1,n_1)} \arrow[ll] \arrow[rr] \arrow[dd,"\psi^{\loc(m_1,n_1)}", near end] &  & \sht_{\nb}^{\loc(m_1,n_1)} \arrow[dd,"\psi^{\loc(m_1,n_1)}", near end]   \\
\hk_{\mb}^{\loc(m_2)} \arrow[dr] &  & \hk_{\mb|\nb}^{0,\loc(m_2)} \arrow[ll,dashrightarrow] \arrow[dr,dashrightarrow] \arrow[rr,dashrightarrow] &  & \hk_{\nb}^{\loc(m_2)} \arrow[dr,dashrightarrow] & \\
 & \hk_{\mb}^{\loc(m_1)} &  & \hk_{\mb|\nb}^{0,\loc(m_1)} \arrow[ll] \arrow[rr] &  & \hk_{\nb}^{\loc(m_1)}\\
\end{tikzcd}
\end{equation}
where 
\begin{itemize}
    \item [(1)] all rectangles are commutative,
    \item [(2)] all rectangles are Cartesian except for the two on the left and right side of the cuboid.
\end{itemize}
Let $\mb=(\mu_1,\cdots,\mu_n)$ be a sequence of dominant cocharacters. We call a quadruple of non-negative integers $(m_1, n_1, m_2, n_2)$ $\mb$-\textit{acceptable} if
\begin{itemize}
    \item [(1)]  $m_1 -m_2 = n_1-n_2$ are $\mu_n$-large (or equivalently $\sigma (\mu_n)$-large),
    \item[(2)]  $m_2 - n_1$ is $\mb$-large.
\end{itemize}
We can define the partial Frobenius morphism 
\begin{equation}
    F_{\mb}^{-1}:\sht_{\sigma(\mu_n),\mu_1,\cdots,\mu_{n-1}}^{\loc (m_1,n_1)}\rightarrow\sht_{\mu_1,\cdots,\mu_2}^{\loc (m_2,n_2)}
\end{equation}
for restricted local Shtukas. The construction of $F_{\mb}^{-1}$ is technical and we refer to \cite[Construction $5.3.12$]{xiao2017cycles} for detailed discussion. 

\subsection{Perverse sheaves on the moduli of local Shtukas}
Let $\mb$ be a sequence of dominant coweights
and $(m_1,n_1)$, $(m_2,n_2)$ be two pairs of $\mb$-large integers such that $m_1\leq m_2$,  $n_1\leq n_2$, and $m_2\neq \infty$. Define the functor 
\begin{equation}
\res^{m_2,n_2}_{m_1,n_1}:=(\res^{m_2,n_2}_{m_1.n_1})^{\star}: \textnormal{P}(\sht^{\loc (m_2,n_2)}_{\mb},\Lambda)\rightarrow \textnormal{P}(\sht^{\loc (m_1,n_1)}_{\mb,\Lambda}).
\end{equation}
Then $(4.4)$ yields  
\begin{equation}
\res^{m_2,n_2}_{m_1,n_1} \circ\res^{m_3,n_3}_{m_2,n_3}=\res^{m_3,n_3}_{m_1,n_1}.
\end{equation}
Like $\res^{m}_n$, the functor $\res^{m_i,n_i}_{m_j,n_j}$ is also an equivalence of categories if $m_j>1$.

We define the category of perverse sheaves on the moduli of local Shtukas as 
\begin{equation}
    \textnormal{P}(\sht^{\loc}_{\bar{k}},\Lambda):=\bigoplus_{\xi\in\pi_1(G)}\textnormal{P}(\sht^{\loc}_{\xi},\Lambda),\textnormal{ }\textnormal{P}(\sht^{\loc}_{\xi},\Lambda):=\varinjlim_{(m,n,\mu)}\textnormal{P}(\sht^{\loc (m,n)}_{\mu},\Lambda)
\end{equation}
where the limit is taken over the triples $\{(m,n,\mu) \in\mathbb{Z}^2\times \xi\mid (m,n)\textnormal{ is }\mu\textnormal{ large}\}$  with the product
partial order. As in \cite{xiao2017cycles}, we call objects in $\textnormal{ }\textnormal{P}(\sht^{\loc}_{\xi},\Lambda)$ \textit{connected objects}. The connecting morphism is given by the composite of fully faithful functor
$$
\begin{tikzcd}[sep=huge]
\textnormal{P}(\sht^{\loc(m_1,n_1)}_{\mu_1},\Lambda) \arrow[r,"\res^{m_2,n_2}_{m_1,n_1}"] & \textnormal{P}(\sht^{\loc(m_2,n_2)}_{\mu_1},\Lambda) \arrow[r,"i_{\mu_1,\mu_1'}"] & \textnormal{P}(\sht^{\loc(m_2,n_2)}_{\mu_1'},\Lambda).
\end{tikzcd}
$$
For each dominant coweight $\mu$ and a pair of $\mu$-large integers $(m,n)$, we define the natural pullback functor 
\begin{equation}
    \Psi^{\loc (m,n)}:=\res^{m,n}_{m,0}: \textnormal{P}(\hk^{\loc (m)}_{\mu},\Lambda)\rightarrow \textnormal{P}(\sht^{\loc (m,n)}_{\mu},\Lambda).
\end{equation}
We observe that $\Psi^{\loc (m,n)}$ commutes with the connecting morphism in $(4.9)$ by  $(4.8)$ and the proper smooth base change. Then we can take the limit and direct sum of $\Psi^{\loc (m,n)}$ and derive the following well-defined functor
\begin{equation}
    \Psi^{\loc}:\textnormal{P}(\hk_{\bar{k}}^{\loc},\Lambda)\rightarrow \textnormal{P}(\sht^{\loc}_{\bar{k}},\Lambda).
\end{equation}
Let $\mathcal{F}_i\in \textnormal{ }\textnormal{P}(\sht^{\loc}_{\xi_i},\Lambda)$ be connected objects. It is realized as $\mathcal{F}_{i,\mu_i}^{(m_i,n_i)}\in\textnormal{P}(\sht_{\mu_i}^{\loc (m_i,n_i)},\Lambda)$ for some $\mu_i$ and some pair of $\mu_i$-large integers $(m_i,n_i)$. We define the set of cohomological correspondences between $\mathcal{F}_1$ and $\mathcal{F}_2$ as
\begin{align*}
    & \cc_{\sht^{\loc}}(\mathcal{F}_1,\mathcal{F}_2)\\ \notag := & \bigoplus_{\xi\in\pi_1(G)}\varinjlim \cc_{\sht_{\mu_1|\mu_2}^{\lambda,\loc (m_1,n_1)}}\Big ((\sht_{\mu_1}^{\loc (m_1,n_1)},\mathcal{F}_{1,\mu_1}^{(m_1,n_1)}),(\sht_{\mu_2}^{\loc (m_2,n_2)},\mathcal{F}_{2,\mu_2}^{(m_2,n_2)})\Big ),
\end{align*}
where the limit is taken over all partially ordered sextuples $(\mu_1,\mu_2,\lambda,m_1,n_1,m_2,n_2)$ such that \begin{itemize}
    \item $(m_1,n_1,m_2,n_2)$ is $(\mu_1+\lambda,\lambda)$ and $(\mu_2+\lambda,\lambda)$-acceptable,
    \item $\mu_i\in\xi_i$, for some $\xi_i\in\pi_1(G)$,
    \item $\lambda\in\xi$.
\end{itemize}
Let $(\mu_1,\mu_2,\lambda,m_1,n_1,m_2,n_2)\leq (\mu_1',\mu_2',\lambda',m_1',n_1',m_2',n_2')$ be another such sextuple. The connecting morphism between the cohomological correspondences 
\begin{equation}
\cc_{\sht_{\mu_1|\mu_2}^{\lambda,\loc (m_1,n_1)}}\Big ((\sht_{\mu_1}^{\loc (m_1,n_1)},\mathcal{F}_{1,\mu_1}),(\sht_{\mu_2}^{\loc (m_2,n_2)},\mathcal{F}_{2,\mu_2})\Big )
\end{equation}
and 
\begin{equation}
\cc_{\sht_{\mu_1'|\mu_2'}^{\lambda',\loc (m_1',n_1')}}\Big ((\sht_{\mu_1'}^{\loc (m_1',n_1')},\mathcal{F}_{1,\mu_1'}),(\sht_{\mu_2'}^{\loc (m_2',n_2')},\mathcal{F}_{2,\mu_2'})\Big )
\end{equation}
is given by first pulling back $(4.13)$ to the Hecke correspondence
$$
\begin{tikzcd}
\sht_{\mu_1}^{\loc (m_1',n_1')} \arrow[r,leftarrow] & \sht^{\lambda,\loc(m_1',n_1')}_{\mu_1|\mu_2} \arrow[r, ] & \sht_{\mu_2}^{\loc(m_2',n_2') },
 \end{tikzcd}
$$
along the restriction morphism, then pushing it forward to the Hecke correspondence
$$
\begin{tikzcd}
\sht_{\mu_1'}^{\loc (m_1',n_1')} \arrow[r,leftarrow] & \sht^{\lambda',\loc(m_1',n_1')}_{\mu_1'|\mu_2'} \arrow[r, ] & \sht_{\mu_2'}^{\loc(m_2',n_2') }.
 \end{tikzcd}
$$
The connecting morphism is well-defined and can be composed. We refer to \cite[\S 5.4.1]{xiao2017cycles} for more discussions.

\section{Key theorem}
In this section, we state and prove the key theorem for our construction of the Jacquet-Langlands transfer. We will make use of the theory of the cohomological correspondences  throughout this chapter. Instead of explaining all the details, we refer to \cite[Appendix A.2]{xiao2017cycles} for a nice discussion.
\subsection{Set up}
Fix a half Tate twist $\Lambda(1/2)$. Recall notations $\langle  d \rangle$ and $f^{\star}$ introduced in \S$1.3$. Throughout this section, we consider the Langlands dual group scheme $\hat{G}_\Lambda$ over $\Lambda$ of $G$ and its $\Lambda$-representations. The subscripts $\Lambda$ will be omitted for simplicity. We generalize a few notions introduced in previous sections for the sake of stating the key theorem.

\subsection{More on local Hecke stacks}
Let $\vb:=V_1 \boxtimes V_2\boxtimes\cdots \boxtimes V_s\in \textnormal{Rep}(\hat{G}^s)$ and assume that for each $i$, $V_i$ has the Jordan-H$\ddot{\textnormal{o}}$lder factors $\{V_{\mu_{ij}}\}_j$.

The integral geometric Satake equivalence (cf. Theorem $2.5$) $\textnormal{Sat}_{G^s}$ sends $\vb$ to an $(L^+G\otimes \bar{k})^s$-equivariant perverse sheaf $\textnormal{Sat}_{G^s}(\vb)$ on $(\textnormal{Gr}_G\otimes\bar{k})^s$. We write $\textnormal{Gr}_{\vb}$ for the support of the external tensor product $\textnormal{Sat}(V_1)\tilde{\boxtimes}\textnormal{Sat}(V_2)\tilde{\boxtimes}\cdots\tilde{\boxtimes}\textnormal{Sat}(V_s)$. Let $m$ be a non-negative integer. We call it $V_i$-\textit{large} if $m$ is $\mu_{ij}$-large for each $j$, and we call it $\vb$-\textit{large} if $m=m_1+m_2+\cdots+m_s$ such that $m_i$ is $V_i$-large for each $i$. For a $\vb$-large integer $m$, $\textnormal{Sat}_{G^n}(\vb)$ descends to a perverse sheaf supported on $\hk^{\loc (m)}_{\vb}:=[L^mG\backslash \textnormal{Gr}_{\vb}]$. We write  $S(V)^{\loc (m)}$ for the twist of this perverse sheaf by $\langle m\dim G \rangle$. Note that $S(\vb)^{\loc (m)}$ is isomorphic to the "$\star$"-pullback of $S(V_1)^{\loc (m_1)}\boxtimes S(V_2)^{\loc (m_2)}\boxtimes\cdots\boxtimes S(V_s)^{\loc (m_s)}$ along the perfectly smooth morphism $\hk_{\vb}^{\loc (m)}\rightarrow \prod_i \hk^{\loc (m_i)}_{V_i}$ constructed in $(3.9)$.

In the case $s=1$, we have 
$$
\textnormal{Gr}_{V_1}=\cup_{j}\textnormal{Gr}_{\mu_{1j}}, \text{ }\hk_{V_1}^{\loc (m)}=\cup_{j}\hk^{\loc m}_{\mu_{1j}}.
$$  
In general, $\hk^{\loc (m)}_{\vb}$ is of the form $\cup_{\mb}\hk^{\loc (m)}_{\mb}$. Via descent, Corollary $2.7$ gives the following natural isomorphism: 
\begin{equation}
    \hg(\vb,\wb)\cong \cc_{\hk_{\vb\mid\wb}^{0,\loc (m)}}\big ((\hk_{\vb}^{\loc (m)},\textnormal{Sat}_{G}^{\loc (m)}(\vb)),(\hk_{\wb}^{\loc (m)},\textnormal{Sat}_{G}^{\loc (m)}(\wb))\big).
\end{equation}
Here and below, we regard $\vb$ and $\wb$ as representations of $\hat{G}$ via the diagonal embedding $\hat{G}\hookrightarrow \hat{G}^s$.

Let $\vb$ and $\wb$ be two representations of $\hat{G}^s$. We can similarly define $\textnormal{Gr}_{\vb\mid \wb}^0:= \textnormal{Gr}_{\vb}\times_{\textnormal{Gr}_G}\textnormal{Gr}_{\wb}$ and $\hk_{\vb\mid \wb}^{0,\loc (m)}=[L^mG \backslash \textnormal{Gr}_{\vb\mid\wb}^0]$, and Satake correspondences supported on them.
\subsection{More on moduli of local Shtukas}
Let $\vb\in\textnormal{Rep}(\hat{G}^s)$. For a pair of non-negative integers $(m,n)$,  we can generalize the notion of $\mb$-\textit{large} and define the notion of $\vb$-\textit{large}.  Let $(m,n)$ be a pair of $\vb$ and $\wb$-large integers, we can define the moduli of restricted local Shtukas $\sht_{\vb}^{\loc (m,n)}$ and $\sht_{\vb\mid\wb}^{\loc (m,n)}$. Similar to $\hk^{\loc}_{\vb}$, the stacks $\sht_{\vb}^{\loc (m,n)}$ and $\sht_{\vb\mid\wb}^{\loc (m,n)}$ can be regarded as unions of $\sht^{\loc (m,n)}_{\mb}$ and unions of $\sht_{\mb\mid\nb}^{\loc (m,n)}$. We have the natural forgetful map
\begin{equation}
    \psi^{\loc (m,n)}:\sht_{\vb}^{\loc (m,n)}\rightarrow \hk_{\vb}^{\loc (m)}.\end{equation}
Choose a pair of $\vb$-large integers $(m,n)$ such that $n>0$. Write 
$$
S(\widetilde{\vb})^{\loc (m,n)}:=\Psi^{\loc (m,n)}(\textnormal{Sat}(\vb)^{\loc (m)})\in \textnormal{P}(\sht^{\loc (m,n)},\Lambda)
$$
for the pullback of $\textnormal{Sat}(V)^{\loc (m)}$ along the morphism $\psi^{\loc (m,n)}$ (up to a shift and twist). For $s=1$, $S(\widetilde{V})^{\loc (m,n)}$ represents the perverse sheaf $S(\widetilde{V}):=\Psi(\textnormal{Sat}_{G}(V))\in \textnormal{P}(\sht^{\loc}_{\bar{k}},\Lambda)$.

Consider the front face of the diagram $(4.5)$. The middle and right vertical maps are perfectly smooth. Pulling back the cohomological correspondence on the right hand side of $(5.1)$ to the upper edge and pre-composing it with $(5.1)$, we get the map 
\begin{equation}
\mathcal{C}^{\loc (m,n)}:  \hg(\vb,\wb)\rightarrow \cc_{\sht_{\vb\mid\wb}^{0,\loc (m,n)}}(S(\widetilde{\vb})^{\loc (m,n)},S(\widetilde{\wb})^{\loc (m,n)}).  
\end{equation}
The map $\mathcal{C}^{\loc (m,n)}$ is compatible with the compositions at the source and target, and we refer to \cite[Lemma 6.1.8]{xiao2017cycles} for the proof.

Let $\vb\in\textnormal{Rep}(\hat{G}^s)$ and $W\in\textnormal{Rep}(\hat{G})$. We call a
quadruple of non-negative integers $(m_1, n_2, m_1, n_1)$ $\vb\boxtimes W$-\textit{acceptable} if 
\begin{itemize}
    \item $m_1-m_2=n_1-n_2$ is $W$-large,
    \item $(m_2,n_1)$ is $\vb$-large.
\end{itemize}
For a quadruple of $\vb\boxtimes W$-acceptable integers $(m_1, n_2, m_1, n_1)$, we can construct the partial Frobenius morphism 
\begin{equation}
    F_{\vb\boxtimes W}^{-1}:\sht^{\loc (m_1,n_1)}_{\sigma W\boxtimes \vb}\rightarrow \sht^{\loc (m_2,n_2)}_{\vb\boxtimes W}
\end{equation}
similar to $(4.1)$. Here, $\sigma W$ is the Frobenius twist of $W$ as in Remark $2.6$.

Let $V_1,V_2\in \textnormal{Rep}(\hat{G})$. For any projective object $W\in \textnormal{Rep}(\hat{G})$, choose a quadruple of $((V_1\otimes V_2\otimes W)\boxtimes W^*)$-acceptable integers $(m_1,n_1,m_2,n_2)$. We define the following stack
\begin{equation}
    \sht^{W,\loc (m_1,n_1)}_{V_1\mid V_2}:=\sht^{\loc (m_1,n_1)}_{V_1\mid \sigma W^*\boxtimes (\sigma W\otimes V_1)}\times_{\sht^{\loc (m_2,n_2)}_{(\sigma W\otimes V_1)\boxtimes W^*}} \sht^{\loc (m_1.n_1)}_{(\sigma W\otimes V_1)\boxtimes W^*\mid V_2}.
\end{equation}

\subsection{The category of coherent sheaves on the stack of unramified Langlands parameters}
Recall from Remark $2.6$ that the Langlands dual group $\hat{G}$ is naturally equipped with an action of the arithmetic Frobenius $\sigma$. Consider the $\sigma$-twisted conjugation action of $\hat{G}$ on $\hat{G} $. We denote by
$\textnormal{Coh}^{\hat{G} }(\hat{G} \sigma)$
the abelian category of $\hat{G} $-equivariant coherent sheaves on the (non-neutral) component $\hat{G} \sigma\subset\hat{G} \rtimes \sigma$. Equivalently, $\textnormal{Coh}^{\hat{G} }(\hat{G} \sigma)$ can be regarded as the abelian category of coherent sheaves on the quotient stack $[\hat{G} \sigma/\hat{G} ]$ where $\hat{G} $ acts on $\hat{G} \sigma$ by the usual conjugation action.

Let $V\in \textnormal{Rep}(\hat{G})$
 be an algebraic representation of $\hat{G}$. There is an associated vector bundle on $\hat{G}\sigma$ with global section $\mathcal{O}_{\hat{G}}\otimes V$. Consider the following action of $\hat{G}$ on $\mathcal{O}_{\hat{G}}\otimes V$. For any $g\in\hat{G}$ and $(f,v)\in\mathcal{O}_{\hat{G}}\otimes V$, $g\cdot (f,v):=(gf\sigma^{-1}(g),gv)$. The associated vector bundle thus gives an object $\widetilde{V}\in\textnormal{Coh}^{\hat{G}}(\hat{G}\sigma)$. 
 \subsection{Key theorem}
 The following theorem is an analogue of \cite[Theorem 6.0.1]{xiao2017cycles}.
\begin{theorem}
Let $V_1,V_2\in\textnormal{Rep}_{\Lambda}(\hat{G}_{\Lambda})$ be two projective $\Lambda$-modules.
Then  there exists the following map
\begin{equation}
    \sv:\textnormal{Hom}_{\textnormal{Coh}^{\hat{G}}(\hat{G}\sigma)}(\widetilde{V_1},\widetilde{V_2})\longrightarrow \cc_{\sht^{\loc}}( S(\widetilde{V_1}), S(\widetilde{V_2})),
\end{equation}
which is compatible with the natural composition maps in the source and target.
\end{theorem}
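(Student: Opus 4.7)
The plan is to follow the concrete approach outlined in the introduction rather than the categorical-trace route used over $\bar{\mathbb{Q}}_\ell$. The starting point is the adjunction-style identification
$$\textnormal{Hom}_{\textnormal{Coh}^{\hat{G}}(\hat{G}\sigma)}(\widetilde{V_1},\widetilde{V_2}) \cong \hg(V_1, \mathcal{O}_{\hat{G}}\otimes V_2),$$
where $\hat{G}$ acts on $\mathcal{O}_{\hat{G}}$ via $\sigma$-twisted conjugation. I then use the Peter--Weyl filtration of $\mathcal{O}_{\hat{G}}$, whose associated graded is $\bigoplus_\lambda W(\lambda)\otimes S(\lambda^*)$, to replace a general morphism by a compatible system of elements $\mathbf{a}\in\hg(V_1, W(\lambda)\otimes S(\lambda^*)\otimes V_2)$ indexed by dominant weights $\lambda$. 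The task reduces to producing, for each such $\mathbf{a}$, a class in $\cc_{\sht^{\loc}}(S(\widetilde{V_1}), S(\widetilde{V_2}))$ in a manner compatible with the filtration maps, and then passing to the appropriate limit.

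For a fixed $\mathbf{a}$, the construction proceeds in two geometric steps, modeled on \cite{xiao2017cycles} but now honestly using the integral Satake equivalence of Theorem 2.5. First, I view $V_1$ and $W(\lambda)\boxtimes V_2\boxtimes S(\lambda^*)$ as objects of $\textnormal{Rep}(\hat{G}^3)$ and apply Corollary 2.7 (after restricting $\hat{G}\hookrightarrow\hat{G}^3$ diagonally and choosing $\mb$-large integers $m$) to turn $\mathbf{a}$ into a cohomological correspondence between $\textnormal{Sat}^{\loc(m)}_{G}(\widetilde{V_1})$ and $\textnormal{Sat}^{\loc(m)}_{G}(\widetilde{W(\lambda)\boxtimes V_2\boxtimes S(\lambda^*)})$ on a restricted local Hecke stack. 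Second, I transport this correspondence to the Shtuka side using the auxiliary stack $\sht^{W,\loc(m_1,n_1)}_{V_1\mid V_2}$ introduced in (5.5): choosing $(m_1,n_1,m_2,n_2)$ acceptable for the relevant representations and applying $\psi^{\loc(m,n)\star}$, the Frobenius twist isomorphism $\,{}^{\sigma}\mathcal{E}_n\simeq \mathcal{E}_0\,$ that defines a Shtuka pairs the $S(\lambda^*)$ factor on one side against a $\sigma W(\lambda)^*$ factor on the other via the partial Frobenius morphism (5.4). This contraction is governed by Lemma 4.3, which realizes a Hecke correspondence on Shtukas as two Satake correspondences glued along a partial Frobenius; pulling our Hecke-stack cohomological correspondence across this diagram yields the desired element of $\cc_{\sht^{\loc}}(S(\widetilde{V_1}), S(\widetilde{V_2}))$.

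The main obstacles are the well-definedness and the compositionality checks. For well-definedness, independence of the choice of acceptable quadruple $(m_1,n_1,m_2,n_2)$ follows formally because the $\res$-functors are equivalences in the relevant range and the squares of (4.5) are Cartesian on the relevant faces. Independence of $\lambda$ is the subtler point: one must verify that enlarging $\lambda$ and pre/post-composing with the Peter--Weyl inclusion $W(\lambda)\otimes S(\lambda^*)\hookrightarrow W(\lambda')\otimes S(\lambda'^*)$ produces the same cohomological correspondence after Frobenius contraction. I would check this by running the construction inside a single larger Satake correspondence containing both cases, then using Corollary 2.7 to reduce the equality to a diagrammatic identity in $\textnormal{Rep}(\hat{G})$ (essentially the compatibility of matrix coefficients with the coalgebra structure of $\mathcal{O}_{\hat{G}}$). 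For compatibility with composition, I would mimic \cite[Lemma 6.1.8]{xiao2017cycles}: the composition of two morphisms $V_1\to\mathcal{O}_{\hat{G}}\otimes V_2$ and $V_2\to\mathcal{O}_{\hat{G}}\otimes V_3$ uses the multiplication on $\mathcal{O}_{\hat{G}}$, which on Peter--Weyl components is matrix multiplication in the Schur modules; on the geometric side, this corresponds to the pushforward along the composition of two Shtuka Hecke correspondences, and Corollary 2.7 together with Lemma 4.3 translates these two operations into each other. I expect the independence-of-$\lambda$ step to be the main technical difficulty, since over $\Lambda=\mathbb{Z}_\ell$ the absence of complete reducibility forces one to track the filtration explicitly rather than decompose everything into isotypic pieces.
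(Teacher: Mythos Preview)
Your overall strategy is correct and matches the paper's in spirit: identify $\textnormal{Hom}_{\textnormal{Coh}^{\hat{G}}(\hat{G}\sigma)}(\widetilde{V_1},\widetilde{V_2})\cong\hg(V_1,\mathcal{O}_{\hat{G}}\otimes V_2)$, represent an element by an auxiliary $\mathbf{a}\in\hg(V_1,\sigma W^*\otimes V_2\otimes W)$, turn $\mathbf{a}$ into a Hecke-stack cohomological correspondence via the integral Satake equivalence, and push it to Shtukas using the partial Frobenius. Where you diverge from the paper is in the organization of the well-definedness argument, and there is a genuine technical slip there.

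You propose to filter $\mathcal{O}_{\hat{G}}$ by Peter--Weyl and work with the associated graded pieces $W(\lambda)\otimes S(\lambda^*)$, checking compatibility as $\lambda$ grows. But over $\Lambda=\mathbb{Z}_\ell$ this filtration does not split, so a morphism $V_1\to\mathcal{O}_{\hat{G}}\otimes V_2$ is \emph{not} determined by a compatible system of maps into the graded pieces, and there is no inclusion $W(\lambda)\otimes S(\lambda^*)\hookrightarrow W(\lambda')\otimes S(\lambda'^*)$ as you write; the inclusions live at the level of the filtered pieces $F_{\leq\lambda}\mathcal{O}_{\hat G}\hookrightarrow F_{\leq\lambda'}\mathcal{O}_{\hat G}$. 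Once you correct this and work with $W=F_{\leq\lambda}\mathcal{O}_{\hat G}$, you are effectively allowing an arbitrary finite projective $W$, which is exactly what the paper does.

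The paper's proof then sidesteps the filtration bookkeeping altogether. It allows any projective $W$ and any lift $\mathbf{a}$ with $\Theta_W(\mathbf{a})=\alpha$, and proves in one stroke that $\mathcal{C}_W(\mathbf{a})$ depends only on $\alpha$: using the $\hat{G}$-equivariant maps $a_X:X\to\mathcal{O}_G\otimes\underline{X}$ and $m_X:\underline{X}^*\otimes X\to\mathcal{O}_G$ together with $d_\sigma:\Lambda[\hat{G}\sigma]\to\sigma\mathcal{O}_G\otimes\mathcal{O}_G$, every choice is compared to a single universal construction with $W=\mathcal{O}_G$. The engine behind this is Lemma~5.2, a naturality statement saying that inserting a $\hat{G}\times\hat{G}$-module map $f_1\otimes f_2$ before or after the partial-Frobenius correspondence $\mathbb{D}\Gamma^*_{F^{-1}}$ gives the same composite. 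This is cleaner than a $\lambda$-by-$\lambda$ verification and is what makes the integral case tractable. For compositionality the paper uses the same Lemma~5.2: given auxiliary objects $S_1,S_2$ for the two morphisms, the composite is realized with the single auxiliary object $S_1\otimes S_2$, and the required commutative diagram again reduces to an instance of Lemma~5.2 (this is Lemma~5.4). Your sketch of composition via the multiplication on $\mathcal{O}_{\hat{G}}$ is morally the same, but the actual reduction goes through this naturality lemma rather than through Lemma~4.3 directly.
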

We prove this theorem in the rest of this section.

\subsubsection{} We give an explicit construction of $\sv$. Consider the following canonical isomorphisms
\begin{align}
   & \textnormal{Hom}_{\textnormal{Coh}^{\hat{G}}(\hat{G}\sigma)}(\widetilde{V_1},\widetilde{V_2}) \\ \notag
   \cong & \textnormal{Hom}_{\mathcal{O}_{\hat{G}\sigma}}(\mathcal{O}_{\hat{G}\sigma}\otimes V_1, \mathcal{O}_{\hat{G}\sigma}\otimes V_2)^{\hat{G}}  \\ \notag
   \cong & \textnormal{Hom}(V_1,\mathcal{O}_{\hat{G}\sigma}\otimes V_2)^{\hat{G}} \\ \notag
   \cong & (V_1^*\otimes \mathcal{O}_{\hat{G}\sigma}\otimes V_2)^{\hat{G}}.
\end{align}
Let $W\in\textnormal{Rep}_{\Lambda}(\hat{G}_{\Lambda})$ be a projective $\Lambda$-module with $\Lambda$-basis $\{e_i\}_i$ and dual basis $\{e_i^*\}_i$. We construct the map 
$$
\Theta_W:\textnormal{Hom}_{\hat{G}_{\Lambda}}(V_1,\sigma W^*\otimes V_2\otimes W)\cong \hg(V_1,\textnormal{Hom}(\sigma W\otimes W^*,V_2))\rightarrow \textnormal{Hom}_{\textnormal{Coh}^{\hat{G}}(\hat{G}\sigma)}(\widetilde{V_1},\widetilde{V_2}),
$$
by sending $\mathbf{a}\in \textnormal{Hom}_{\hat{G}_{\Lambda}}(V_1,\sigma W^*\otimes V_2\otimes W)$ to the $V_1^*\otimes V_2$-valued function $\Theta_W(\mathbf{a})$ on $\hat{G}\sigma$ defined by 
$$
(\Theta_W(\mathbf{a})(g))(v_1):=\sum_i (\mathbf{a}(v_1))(ge_i\otimes e_i^*).
$$

It suffices to construct the map
$$
\mathcal{C}_{W}: \hg (V_1, \sigma W^*\otimes W\otimes V_2) \rightarrow \cc_{\sht^{\loc}}( S(\widetilde{V_1}), S(\widetilde{V_2})).
$$
for every $W\in \textnormal{Rep}_{\Lambda}(\hat{G}_{\Lambda})$.
Let $\mathbf{a}\in \hg (V_1, \sigma W^*\otimes W\otimes V_2)$.
We have the following coevaluation and evaluation maps: 
$$
\delta_{\sigma W}:\textbf{1}\rightarrow \sigma W^*\otimes \sigma W, e_W:W\otimes W^*\rightarrow \textbf{1}.
$$
Choose a quadruple $(m_1,n_1,m_2,n_2)$ of $(V_1\otimes V_2\otimes W)\boxtimes W^*$-large integers. 
Then the map $\mathcal{C}^{\loc (m_1,n_1)}$ defined in $(5.3)$ sends $\mathbf{a}$ to the cohomological correspondence 
\begin{equation}
  \mathcal{C}^{\loc (m_1,n_1)}(\mathbf{a}): S(\widetilde{V}_1)^{\loc (m_1,n_1)}\longrightarrow S(\sigma\widetilde{W^*}\boxtimes (\widetilde{V_2}\otimes\widetilde{W}))^{\loc (m_1,n_1)}.
\end{equation}
The partial Frobenius morphism $(5.4)$ gives rise to the cohomological correspondence (cf.\cite[A.2.3]{xiao2017cycles})
\begin{equation}
  \mathbb{D}\Gamma^*_{F_{(W\otimes V_2)\boxtimes W^*}^{-1}}: S(\sigma\widetilde{W^*}\boxtimes (\widetilde{V_2}\otimes\widetilde{W}))^{\loc (m_1,n_1)}\longrightarrow S((\widetilde{V_2}\otimes\widetilde{W})\boxtimes \widetilde{W^*})^{\loc (m_2,n_2)}.
\end{equation}
Finally, $\mathcal{C}^{\loc (m_2,n_2)}$ sends $\textnormal{id}\otimes e_W$ to the cohomological correspondence 
\begin{equation}
  \mathcal{C}^{\loc (m_2,n_2)}(\textnormal{id}\otimes e_{W}):  S((\widetilde{V_2}\otimes \widetilde{W})\boxtimes \widetilde{W^*})^{\loc (m_2,n_2)}\longrightarrow S(\widetilde{V_2})^{\loc (m_2,n_2)}.
\end{equation}
The composition of cohomological correspondences $(5.8)$, $(5.9)$, and $(5.10)$ yields a cohomological correspondence 
$$
\mathcal{C}_{W}(\mathbf{a})\in \cc_{\sht^{W,\loc (m_1,n_1)}_{V_1\mid V_2}}(S(\widetilde{V_1})^{\loc (m_1,n_1)},S({\widetilde{V_2})}^{\loc (m_2,n_2)}).
$$

The construction of the map $\sv$ can be summarized in the following diagram
$$
\begin{tikzcd}[row sep=huge]
\textnormal{Hom}_{\textnormal{Coh}^{\hat{G}}(\hat{G}\sigma)}(\widetilde{V_1},\widetilde{V_2}) \arrow[rr,dashed, "\sv"] &  &\cc_{\sht^{\loc}}( S(\widetilde{V_1}), S(\widetilde{V_2}))
\\
& \hg (V_1, \sigma W^*\otimes  V_2 \otimes W) \arrow[ul,"\Theta_W"] \arrow[ur,"\mathcal{C}_{W}"] &.
\end{tikzcd} 
$$
\subsubsection{}We prove that the cohomological correspondence constructed in the previous section is well-defined and can be composed.

Let $\mathbf{a'}$ denote the image of $\mathbf{a}$ under the canonical isomorphism $\hg (V_1,\sigma W^*\otimes V_2 \otimes W )\cong \hg(\sigma W\otimes V_1 \otimes W^*,V_2 )$. 
\begin{lemma}
Let $X,Y,W_1,W_2,W_1',W_2'$ be representations of $\hat{G}$, and $f_1\otimes f_2:W_1\otimes W_2\rightarrow W_1'\otimes W_2'$ be a $\hat{G}\times \hat{G}$-module homomorphism. Let $\mathbf{b}\in \hg(X,\sigma W_1\otimes Y\otimes W_2)$ and $\mathbf{b'}\in \hg(Y\otimes W_2'\otimes W_1',Y)$. We omit choosing appropriate integers $(m_i,n_i)$ for simplicity. Then we have 
\begin{equation}
    \mathcal{C}(\mathbf{b'}\circ (\textnormal{id}\otimes f_2\otimes f_1))\circ \mathbb{D}\Gamma_{F^{-1}}^*\circ \mathcal{C}(\mathbf{b})=\mathcal{C}(\mathbf{b'})\circ \mathbb{D}\Gamma_{F^{-1}}^*\circ \mathcal{C}((\sigma f_1\circ \textnormal{id}\otimes f_2)\circ \mathbf{b}).
\end{equation}
In particular, the cohomological correspondence $\sv(\mathbf{a})$ equals to the composition of the following cohomological correspondences:
\begin{align*}
    & \mathcal{C}(\delta_{\sigma W}\otimes \textnormal{id}_{V_1}):S(\widetilde{V_1})\longrightarrow S(\sigma\widetilde{W^*}\boxtimes (\sigma\widetilde{W}\otimes \widetilde{V_1})),\\
    & \mathbb{D}\Gamma^*_{F_{(W\otimes V_1)\boxtimes W^*}^{-1}}: S(\sigma\widetilde{W^*}\boxtimes (\sigma\widetilde{W}\otimes \widetilde{V_1}))\longrightarrow S((\sigma\widetilde{W}\otimes \widetilde{V_1})\boxtimes \widetilde{W^*})\\
    & \mathcal{C}(\mathbf{a'}): S((\sigma\widetilde{W}\otimes \widetilde{V_1})\boxtimes \widetilde{W^*})\longrightarrow S(\widetilde{V_2}).
\end{align*}
\end{lemma}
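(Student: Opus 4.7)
The plan is to derive both (5.11) and the ``In particular'' claim from two standard ingredients: the compatibility of $\mathcal{C}^{\loc(m,n)}$ with composition, proved in \cite[Lemma 6.1.8]{xiao2017cycles} and cited just after (5.3), and the naturality of the partial Frobenius correspondence $\mathbb{D}\Gamma^*_{F^{-1}}$ with respect to Satake morphisms. Applying composition compatibility, both sides of (5.11) factor as a three-term composition of the shape $\mathcal{C}(\mathbf{b'}) \circ (\text{middle}) \circ \mathcal{C}(\mathbf{b})$, where on the left the middle factor is $\mathcal{C}(\textnormal{id}_Y \otimes f_2 \otimes f_1) \circ \mathbb{D}\Gamma^*_{F^{-1}}$ and on the right it is $\mathbb{D}\Gamma^*_{F^{-1}} \circ \mathcal{C}(\sigma f_1 \otimes \textnormal{id}_Y \otimes f_2)$. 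Thus (5.11) reduces to the naturality identity
\begin{equation*}
\mathcal{C}\bigl((\textnormal{id}_Y \otimes f_2) \boxtimes f_1\bigr) \circ \mathbb{D}\Gamma^*_{F^{-1}} = \mathbb{D}\Gamma^*_{F^{-1}} \circ \mathcal{C}\bigl(\sigma f_1 \boxtimes (\textnormal{id}_Y \otimes f_2)\bigr),
\end{equation*}
expressing that $F^{-1}$ intertwines the $\sigma$-twisted morphism on the source with its untwisted image on the target.

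This naturality is a direct consequence of the modular construction of $F^{-1}_{\vb \boxtimes W}$ given in \cite[Construction 5.3.12]{xiao2017cycles}: the partial Frobenius rotates the last modification in a Shtuka chain past $\sigma$, identifying the $\sigma W_1$-factor on the source with the $W_1$-factor on the target via the Galois action on $\hat{G}$; combined with the Galois equivariance of $\textnormal{Sat}_G$ recorded in Remark 2.6, a morphism $f_1 : W_1 \to W_1'$ therefore pulls back to $\sigma f_1$ on the source of $F^{-1}$ and to $f_1$ on the target, while the untouched factor $Y \otimes W_2$ transforms by $\textnormal{id}_Y \otimes f_2$ on both sides. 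The main obstacle I anticipate is not conceptual but the bookkeeping of parameters: one must choose a $(V_1 \otimes V_2 \otimes W) \boxtimes W^*$-acceptable quadruple $(m_1, n_1, m_2, n_2)$ so that every cohomological correspondence in the chain lives on a compatible restricted-Shtuka stack, and the shifts $\langle \cdot \rangle$ coming from the perfectly smooth restriction maps of (5.7) line up correctly across all compositions.

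For the ``In particular'' statement, the plan is to use the explicit form of the duality bijection $\textnormal{Hom}_{\hat{G}}(V_1, \sigma W^* \otimes V_2 \otimes W) \cong \textnormal{Hom}_{\hat{G}}(\sigma W \otimes V_1 \otimes W^*, V_2)$, namely
\begin{equation*}
\mathbf{a} = (\textnormal{id}_{\sigma W^*} \otimes \mathbf{a'} \otimes \textnormal{id}_W) \circ (\delta_{\sigma W} \otimes \textnormal{id}_{V_1} \otimes \delta_W),
\end{equation*}
where $\delta_W : \textbf{1} \to W^* \otimes W$ is the coevaluation. Substituting into the defining composition (5.8)--(5.10) of $\sv(\mathbf{a})$ and applying composition compatibility, the factor $\mathcal{C}(\mathbf{a})$ splits; applying (5.11) (equivalently, the naturality of the previous paragraph) pushes the new $\mathcal{C}(\textnormal{id}_{\sigma W^*} \otimes \mathbf{a'} \otimes \textnormal{id}_W)$ factor across $\mathbb{D}\Gamma^*_{F^{-1}}$. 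The remaining composition, involving $e_W$ and the freshly introduced $\delta_W$, collapses by the triangle identity for the duality $(W, W^*)$, but only after the partial Frobenius has redistributed the $W$ and $W^*$ factors across the two $\boxtimes$-components; this redistribution is precisely what produces the identity map (rather than the naive loop $e_W \circ \delta_W = \dim W$) and yields the claimed three-step composition $\mathcal{C}(\mathbf{a'}) \circ \mathbb{D}\Gamma^*_{F^{-1}} \circ \mathcal{C}(\delta_{\sigma W} \otimes \textnormal{id}_{V_1})$.
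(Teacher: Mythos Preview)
Your argument for (5.11) matches the paper's: both factor through the composition compatibility of $\mathcal{C}$ (the ``bent triangles'') and reduce to the commutativity of the inner naturality square for the partial Frobenius. The one adjustment is that the paper invokes \cite[Lemma~6.1.13]{xiao2017cycles} directly for that square rather than arguing from the modular construction; you should cite that lemma, since it packages precisely the compatibility you describe.

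For the ``In particular'' claim your route differs from the paper's. The paper does not expand $\mathbf{a}$ using \emph{both} coevaluations $\delta_{\sigma W}$ and $\delta_W$; instead it introduces the half-dualized map $\mathbf{a''}\in\hg(\sigma W\otimes V_1,\,W\otimes V_2)$ corresponding to $\mathbf{a'}$, and then specializes (5.11) in one stroke with $X=V_1$, $Y=\mathbf{1}$, $W_1=W_1'=W^*$, $W_2=\sigma W\otimes V_1$, $W_2'=W\otimes V_2$, $f_1=\textnormal{id}$, $f_2=\mathbf{a''}$, $\mathbf{b}=\delta_{\sigma W}\otimes\textnormal{id}$. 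With these choices $(\sigma f_1\otimes\textnormal{id}\otimes f_2)\circ\mathbf{b}=\mathbf{a}$ and the two sides of (5.11) become exactly the two three-step compositions in question, so no residual triangle-identity cancellation is needed. Your approach, writing $\mathbf{a}=(\textnormal{id}\otimes\mathbf{a'}\otimes\textnormal{id})\circ(\delta_{\sigma W}\otimes\textnormal{id}_{V_1}\otimes\delta_W)$ and then collapsing the extra $\delta_W$ against $e_W$ after the partial Frobenius has redistributed factors, also works and is conceptually transparent, but it requires one more pass through the naturality square and a separate appeal to the zig-zag identities; the paper's specialization is more economical.
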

\begin{proof}
 Consider the following diagram
\begin{equation}
\begin{tikzcd}[sep=huge]
S(\widetilde{X}) \arrow[r,"\mathcal{C}(\mathbf{b})"] \arrow[dr, bend right ,"\mathcal{C}((\sigma f_2\circ \textnormal{id}\otimes f_1)\circ \mathbf{b})"'] & S(\widetilde{\sigma W_1}\otimes\widetilde{Y}\otimes \widetilde{W_2}) \arrow[r,"\mathbb{D}\Gamma_{F^{-1}}"] \arrow[d,"\mathcal{C}(\sigma f_1\otimes \textnormal{id} \otimes f_2)"] & S(\widetilde{Y}\otimes\widetilde{W_2}\otimes \widetilde{W_1}) \arrow[rd, bend left,"\mathcal{C}(\mathbf{b'}\circ (\textnormal{id}\otimes f_2\otimes f_1))"] \arrow[d,"\mathcal{C}(\textnormal{id}\otimes f_2\otimes f_1)"] \\
 & S(\widetilde{\sigma W_1'}\otimes\widetilde{Y}\otimes \widetilde{W_2'}) \arrow[r,"\mathbb{D}\Gamma_{F^{-1}}"]  & S(\widetilde{Y}\otimes\widetilde{W_2'}\otimes\widetilde{W_1'}) \arrow[r,swap,"\mathcal{C}(\mathbf{b'})"] & S(\widetilde{Y})\end{tikzcd}.
\end{equation}
The bent triangles on the left and right are clearly commutative by Corollary $2.7$. It suffices to prove that the rectangle in the middle is commutative. But this is a direct consequence of \cite[Lemma 6.1.13]{xiao2017cycles}.

Let $X=V_1,Y=\mathbf{1}$, $W_1=W_1'=W^*$,  $W_2=\sigma W\otimes V_1, W_2'=W\otimes V_2$. Write $\mathbf{a''}$ for the image of $\mathbf{a}$ under the canonical isomorphism $\textnormal{Hom}(\sigma W\otimes V_1\otimes W^*,V_2)\cong \textnormal{Hom}(\sigma W\otimes V_1,W\otimes V_2)$. Take $\mathbf{b}=\delta_{\sigma W}\otimes \textnormal{id}$, $f_1=\textnormal{id}$, and $f_2=\mathbf{a''}$. Then the second assertion follows from the above commutative diagram.
\end{proof}

\begin{lemma}
For any $\alpha\in \hg(\widetilde{V_1},\widetilde{V_2})$, the construction of $\sv$ is independent from the choice of
\begin{itemize}
    \item[(1)] projective $\Lambda$-modules $W\in \textnormal{Rep}_{\Lambda}(\hat{G}_{\Lambda})$,
    \item [(2)] $\mathbf{a} \in\hg (V_1,\sigma W^*\otimes V_2\otimes W )$, such that $\Theta_{W}(\mathbf{a})=\alpha$,
    \item [(3)] $(V_1\otimes V_2)\otimes W\boxtimes W^*$-acceptable integers $(m_1,n_1,m_2,n_2)$.
\end{itemize}
\end{lemma}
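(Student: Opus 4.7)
I treat the three independences in sequence. Claim (3) is formal, by naturality under restriction. Claims (1) and (2) are intertwined: the engine is Lemma 5.2, which lets me compare two choices $(W^{(1)},\mathbf{a}^{(1)})$ and $(W^{(2)},\mathbf{a}^{(2)})$ by merging them inside $W:=W^{(1)}\oplus W^{(2)}$. This reduces independence of $W$ to the statement that $\mathcal{C}_W$ vanishes on the kernel of $\Theta_W$ for one fixed projective $W$, which I then attack by decomposing kernel elements into ``off-diagonal'' and ``trace-zero'' pieces and killing each via a second application of Lemma 5.2.

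\textbf{Integers and merging of $W$'s.} For two acceptable quadruples with $(m_i,n_i)\le (m_i',n_i')$, I verify that each of the three ingredients in the construction of $\mathcal{C}_W(\mathbf{a})$---the cohomological correspondence $\mathcal{C}^{\loc(m_1,n_1)}(\mathbf{a})$ in (5.8), the partial Frobenius correspondence $\mathbb{D}\Gamma^{*}_{F^{-1}_{(W\otimes V_2)\boxtimes W^{*}}}$ in (5.9), and $\mathcal{C}^{\loc(m_2,n_2)}(\textnormal{id}\otimes e_W)$ in (5.10)---is natural under the restriction morphism $\res^{m',n'}_{m,n}$ by proper/smooth base change from the cuboid (4.5) and its partial-Frobenius analogue; this identifies the two resulting cohomological correspondences in the direct limit $\cc_{\sht^{\loc}}$. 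Next, given $(W^{(k)},\mathbf{a}^{(k)})$ with $\Theta_{W^{(k)}}(\mathbf{a}^{(k)})=\alpha$ for $k=1,2$, set $W:=W^{(1)}\oplus W^{(2)}$, with $\iota_k\colon W^{(k)}\hookrightarrow W$ and $p_k\colon W\twoheadrightarrow W^{(k)}$ satisfying $p_k\iota_k=\textnormal{id}$. Define the lifts
$\tilde{\mathbf{a}}^{(k)}:=(\sigma p_k^{*}\otimes \textnormal{id}_{V_2}\otimes \iota_k)\circ \mathbf{a}^{(k)} \in \hg(V_1,\sigma W^{*}\otimes V_2\otimes W)$. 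Expanding the formula (5.6) gives $\Theta_W(\tilde{\mathbf{a}}^{(k)})=\alpha$, since only basis vectors of $W$ lying in $W^{(k)}$ contribute to the matrix-coefficient sum. Applying Lemma 5.2 with $\mathbf{b}=\mathbf{a}^{(k)}$, $\mathbf{b}'=\textnormal{id}_{V_2}\otimes e_W$, $f_1=p_k^{*}$ and $f_2=\iota_k$, and using that $p_k\iota_k=\textnormal{id}$ forces $e_W\circ(\iota_k\otimes p_k^{*})=e_{W^{(k)}}$, the composite $\mathbf{b}'\circ(\textnormal{id}\otimes f_2\otimes f_1)$ simplifies to $\textnormal{id}_{V_2}\otimes e_{W^{(k)}}$, and Lemma 5.2 yields $\mathcal{C}_{W^{(k)}}(\mathbf{a}^{(k)})=\mathcal{C}_W(\tilde{\mathbf{a}}^{(k)})$. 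Both (1) and (2) therefore reduce to showing that, for a single fixed projective $W$, $\Theta_W(\mathbf{a})=\Theta_W(\mathbf{a}')$ implies $\mathcal{C}_W(\mathbf{a})=\mathcal{C}_W(\mathbf{a}')$.

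\textbf{Vanishing on $\ker\Theta_W$ and the main obstacle.} By linearity it suffices to prove $\mathcal{C}_W(\mathbf{a})=0$ whenever $\Theta_W(\mathbf{a})=0$. Over $\Lambda=\mathbb{F}_\ell$, decompose $W=\bigoplus_\lambda U_\lambda^{\oplus n_\lambda}$ into (projective covers of) simples; then the matrix-coefficient map $\textnormal{mc}_W\colon \sigma W^{*}\otimes W\to \mathcal{O}_{\hat{G}\sigma}$ has kernel spanned by (a) ``off-diagonal'' pieces of the form $(\sigma p_i^{*}\otimes \textnormal{id}\otimes \iota_j)\circ \mathbf{b}$ with $i\neq j$ (indices now labelling summands of $W$), and (b) ``trace-zero diagonals'' $\sum_i (\sigma p_i^{*}\otimes \textnormal{id}\otimes \iota_i)\circ \mathbf{b}_i$ with $\sum_i \mathbf{b}_i=0$. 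Type (a) elements are killed by a second application of Lemma 5.2 with $f_1=p_i^{*}$, $f_2=\iota_j$: since $p_i\iota_j=0$ one has $e_W\circ(\iota_j\otimes p_i^{*})=0$, so the cohomological correspondence vanishes outright. Type (b) elements are killed because each summand $\mathcal{C}_W((\sigma p_i^{*}\otimes \textnormal{id}\otimes \iota_i)\circ \mathbf{b}_i)$ equals $\mathcal{C}_{U_\lambda}(\mathbf{b}_i)$ by the merging step, so the sum telescopes to $\mathcal{C}_{U_\lambda}(\sum_i \mathbf{b}_i)=\mathcal{C}_{U_\lambda}(0)=0$. The main obstacle is to extend this over $\Lambda=\mathbb{Z}_\ell$: a projective $\Lambda$-module in $\textnormal{Rep}_\Lambda(\hat{G}_\Lambda)$ need not decompose into indecomposables so that the above combinatorics works cleanly, and one must either appeal to flatness of $W$ and argue by reduction mod $\ell$, or reorganize the argument around the Peter--Weyl filtration $\mathcal{O}_{\hat{G}}=\bigcup_\lambda W(\lambda)\otimes S(\lambda^{*})$ recalled in the introduction, proceeding one filtration layer at a time.
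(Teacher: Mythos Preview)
Your argument for (3) is correct and matches the paper. The merging step---embedding both choices into $W=W^{(1)}\oplus W^{(2)}$ via Lemma~5.2---is sound and correctly reduces (1)+(2) to showing that $\mathcal{C}_W$ annihilates $\ker\Theta_W$ for one fixed $W$.

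The gap is in the kernel analysis. First, ``projective $\Lambda$-module'' here means free over $\Lambda$, not projective in $\textnormal{Rep}(\hat G)$, so over $\mathbb{F}_\ell$ the indecomposable summands $U_\lambda$ of $W$ are arbitrary indecomposables, not projective covers of simples. More seriously, your claim that $\ker\Theta_W$ is spanned by types (a) and (b) is equivalent to asserting that each $\Theta_{U_\lambda}$ is injective and that the images $\Theta_{U_\lambda}\bigl(\hg(V_1,\sigma U_\lambda^{*}\otimes V_2\otimes U_\lambda)\bigr)$ for distinct isomorphism classes $\lambda$ are linearly independent in $\textnormal{Hom}_{\textnormal{Coh}^{\hat G}(\hat G\sigma)}(\widetilde{V_1},\widetilde{V_2})$. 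Both statements are Peter--Weyl-type facts that hold for simple $U_\lambda$ in characteristic zero but are not available over $\mathbb{F}_\ell$: for a non-simple indecomposable $U_\lambda$ the matrix-coefficient map need not be injective, and coefficient spaces of non-isomorphic indecomposables can overlap (e.g.\ when one is a subquotient of the other). You justify neither point, so the $\mathbb{F}_\ell$ argument is already incomplete, not only the $\mathbb{Z}_\ell$ case you flag.

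The paper bypasses the kernel analysis entirely by mapping every $W$ into the universal target $\mathcal{O}_G$ (as an ind-object) rather than decomposing $W$. Using the coaction $a_X:X\to\mathcal{O}_G\otimes\underline X$ and the matrix-coefficient map $m_X:\underline X^{*}\otimes X\to\mathcal{O}_G$, one checks the identities $(m_W\circ a_{\sigma W^{*}})\circ\mathbf a'=d_\sigma(\alpha')$ and $\textnormal{id}_{V_2}\otimes e_W=\textnormal{ev}_{(1,1)}\circ(m_W\circ a_{W^{*}})$. A single application of Lemma~5.2 with target $W_1'\otimes W_2'=\mathcal{O}_G\otimes\mathcal{O}_G$ then rewrites
\[
\mathcal{C}_W(\mathbf a)=\mathcal{C}(\textnormal{ev}_{(1,1)})\circ\mathbb{D}\Gamma^{*}_{F^{-1}}\circ\mathcal{C}(d_\sigma(\alpha')),
\]
an expression that visibly depends only on $\alpha$ and works uniformly for $\Lambda=\mathbb{Z}_\ell,\mathbb{F}_\ell$ with no structural hypotheses on $W$.
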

\begin{proof}
The proof is completely similar to that of \cite[Lemma 6.2.5]{xiao2017cycles}, and we briefly discuss it here.

We start by proving the independence of $(3)$. Choose another quadruple of $(V_1\otimes V_2)\otimes W\boxtimes W^*$-acceptable integers $(m_1',n_1',m_2',n_2')\geq (m_1,n_1,m_2,n_2)$. We have the following diagram of Hecke correspondences
$$
\begin{tikzcd}[sep=huge]
\sht^{\loc(m_1',n_1')}_{V_1} \arrow[r,leftarrow] \arrow[d,"\res^{m_1'n_1'}_{m_1,n_1}"] &  \sht^{\lambda,\loc(m_1',n_1')}_{V_1\mid V_2} \arrow[r] \arrow[d,"\res^{m_1'n_1'}_{m_1,n_1}"] & 
\sht^{\loc(m_2',n_2')}_{V_2} \arrow[d, "\res^{m_2'n_2'}_{m_2,n_2}"] \\
\sht^{\loc(m_1,n_1)}_{V_1} \arrow[r,leftarrow] & \sht^{\lambda,\loc(m_1,n_1)}_{V_1\mid V_2} \arrow[r] & \sht^{\loc(m_2,n_2)}_{V_2}.
\end{tikzcd}
$$
This is the upper face of diagram $(4.5)$. As we discussed in \S $3$, all the vertical maps are smooth, the two squares are commutative, and the left square is Cartesian. Then $\mathcal{C}^{\loc (m_1',n_1')}_{\sch}(\mathbf{a})$ equals the pullback of $\mathcal{C}^{\loc (m_1,n_1)}_{\sch}(\mathbf{a})$ along the vertical maps.

Next, we prove the independence of $(1)$ and $(2)$ simultaneously. Consider that $\hat{G}$ acts on the filtration of $\mathcal{O}_G$ by right regular representation. Then $\mathcal{O}_G$ is realized as an ind-object in $\textnormal{Rep}_{\Lambda}(\hat{G})$. Let $X\in \textnormal{Rep}_{\Lambda}(\hat{G})$ be a projective object and we denote by $\underline{X}$ the underlying $\Lambda$-module of $X$ equipped with the trivial $\hat{G}$-action. Consider the following $\hat{G}_{\Lambda}$-equivariant maps
$$
    \textnormal{a}_X:X\rightarrow \mathcal{O}_G\otimes \underline{X}, \textnormal{ }x\mapsto \textnormal{a}_X(x)(g):=gx,
$$
$$
    m_X:\underline{X}^*\otimes X\rightarrow\mathcal{O}_G,\textnormal{ }(x^*,x)\mapsto m_{X}(x^*,x)(g):=x^*(gx),
$$
where we identify $\mathcal{O}_G\otimes \underline{X}$ as the space of $\underline{X}$-valued functions on $\hat{G}$ in the definition of $\textnormal{a}_X$ and $m_X$. Taking $X=W$, we have the following $\hat{G}\times \hat{G}$-module maps
$$
\sigma W^*\otimes V_2\otimes  W\xrightarrow{\textnormal{a}_{\sigma W^*}}\underline{W}^*\otimes \sigma \mathcal{O}_G\otimes V_2\otimes W \xrightarrow{m_{W}} \sigma\mathcal{O}_G\otimes V_2\otimes \mathcal{O}_G.
$$
The map $\hat{G}\times \hat{G}\rightarrow \hat{G}\sigma,\textnormal{ }(g_1,g_2)\mapsto \sigma (g_1)^{-1}\sigma (g_2)\sigma$ induces a natural map $d_{\sigma}:\Lambda[\hat{G}\sigma]\rightarrow \sigma \mathcal{O}_G\otimes \mathcal{O}_G$ which intertwines the $\sigma$-twisted conjugation action on $\Lambda[\hat{G}\sigma]$ and the diagonal action of $\hat{G}$ on $\sigma \mathcal{O}_G\otimes \mathcal{O}_G$. For any $\alpha\in \hg (V_1,\mathcal{O}_G\otimes V_2)$, denote by $\alpha'$ the image of $\alpha$ under the following map
$$\hg (V_1,\mathcal{O}_G\otimes V_2)\xrightarrow{d_{\sigma}} \hg(V_1,\sigma\mathcal{O}_G\otimes V_2\otimes \mathcal{O}_G).
$$
Direct computation yields the followings
$$
(m_{W}\circ \textnormal{a}_{\sigma W^*})\circ \mathbf{a'}=d_{\sigma}(\alpha'):V_1\rightarrow \sigma \mathcal{O}_G\otimes V_2\otimes \mathcal{O}_G,$$
and 
$$
\textnormal{id}_{V_2}\otimes e_W=\textnormal{ev}_{(1,1)}\circ (m_{W}\circ a_{W^*}):V_2\otimes W\otimes W^*\rightarrow V_2,
$$
where $\textnormal{ev}_{(1,1)}$ denotes the evaluation at $(1,1)\in\hat{G}\times \hat{G}$. In Lemma $5.2$, let $W_1\otimes W_2:=W\otimes W^*$, $W_1'\otimes W_2':=\mathcal{O}_G\otimes \mathcal{O}_G$, $f_1\otimes f_2:=m_{W}\circ a_{W^*}$, $\mathbf{b}:=\mathbf{a'}$, and $\mathbf{b'}:=\textnormal{ev}_{(1,1)}$. Then we have 
\begin{align*}
    \mathcal{C}_{\sch}(\mathbf{a}) & =\mathcal{C}(\textnormal{id}_{V_2}\otimes e_W)\circ \mathbb{D}\Gamma ^{*}_{F^{-1}_{(V_2\otimes \sch)\boxtimes \sch^*}}\circ \mathcal{C}(\mathbf{a'})\\
    &=\mathcal{C}(\textnormal{ev}_{(1,1)})\circ  \mathbb{D}\Gamma ^{*}_{F^{-1}_{(V_2\otimes \mathcal{O}_G)\boxtimes \mathcal{O}_G}}\circ \mathcal{C}(d_{\sigma}(\alpha')).
\end{align*}
We see from the last equality in the above that $\mathcal{C}_{\sch}(\mathbf{a})$ depends only on $\alpha$ and the lemma is thus proved.
\end{proof}

We claim that our construction of $\sv$ is compatible with the composition of morphisms. More precisely, we have the following lemma.
\begin{lemma}
For any $V_1,V_2,V_3\in\textnormal{Rep}_{\Lambda}(\hat{G}_{\Lambda})$ which are projective $\Lambda$-modules and $S_1,S_2\in\textnormal{Rep}_{\Lambda}(\hat{G}_{\Lambda})$, we have the following commutative diagram
\begin{equation}
	\begin{tikzcd}[column sep=small]
		\textnormal{Hom}_{\textnormal{Coh}^{\hat{G}}(\hat{G}\sigma)}(\widetilde{V_1},\widetilde{V_2}) \otimes \textnormal{Hom}_{\textnormal{Coh}^{\hat{G}}(\hat{G}\sigma)}(\widetilde{V_2},\widetilde{V_3}) \arrow[r,"\phi"] & \textnormal{Hom}_{\textnormal{Coh}^{\hat{G}}(\hat{G}\sigma)}(\widetilde{V_1},\widetilde{V_3}) \\
		\hg(\sigma S_1\otimes V_1\otimes S_1^*,V_2)\otimes \hg(\sigma S_2\otimes V_2\otimes S_2^*,V_3) \arrow[u,""] \arrow[d,"\mathcal{C}_{S_1}\otimes \mathcal{C}_{S_2}"] \arrow[r,"\phi'"] & \hg(\sigma S_2\otimes \sigma S_1\otimes V_1\otimes S_1^*\otimes S_2^*,V_3) \arrow[u,""] \arrow[d,"\mathcal{C}_{S_1\otimes S_2}"] \\
		\cc_{\sht^{\loc}}(S(\widetilde{V_1}),S(\widetilde{V_2}))\otimes \cc_{\sht^{\loc}}(S(\widetilde{V_2}),S(\widetilde{V_3}))\arrow[r,"\phi''"] & \cc_{\sht^{\loc}}(S(\widetilde{V_1}),S(\widetilde{V_3})),
	\end{tikzcd}
\end{equation}

where
\begin{itemize}
\item the unlabelled vertical arrows are given by the Peter-Weyl theorem
    \item $\phi$ is the compositions of morphisms in $\textnormal{Coh}^{\hat{G}}(\hat{G}\sigma)$
    \item $\phi ''$ is the composition described in \S $3.2$
    \item 
$\phi'(\mathbf{a_1}\otimes \mathbf{a_2})$ is defined to be the homomorphism
$$
    \sigma S_2\otimes \sigma S_1\otimes V_1\otimes S_1^*\otimes S_2^* \xrightarrow{\textnormal{id}_{\sigma S_2}\otimes\mathbf{a_1}\otimes \textnormal{id}_{ S_2^*}} \sigma S_2\otimes V_2\otimes S_2^*\xrightarrow{\mathbf{a_2}} V_3.
$$
\end{itemize}
\end{lemma}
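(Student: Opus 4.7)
The top square commutes essentially by definition: under the Peter--Weyl presentation $\mathcal O_G=\varinjlim_\lambda W(\lambda)\otimes W(\lambda)^*$ and the identification
\[
\textnormal{Hom}_{\textnormal{Coh}^{\hat G}(\hat G\sigma)}(\widetilde V,\widetilde V')\cong \hg(V,\mathcal O_G\otimes V')
\]
used in the proof of Lemma~5.3, the formula for $\phi$ becomes the tensor-contraction formula defining $\phi'$. So the real content is the commutativity of the lower rectangle, i.e.\ the identity
\[
\mathcal{C}_{S_1\otimes S_2}\bigl(\phi'(\mathbf a_1\otimes \mathbf a_2)\bigr)\;=\;\mathcal{C}_{S_2}(\mathbf a_2)\circ \mathcal{C}_{S_1}(\mathbf a_1)
\]
in $\cc_{\sht^{\loc}}(S(\widetilde{V_1}),S(\widetilde{V_3}))$. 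By Lemma~5.3 the two sides are independent of acceptability data and choice of auxiliary modules, so I may pick a single quadruple of $((V_1\otimes V_3)\otimes (S_1\otimes S_2))\boxtimes (S_1\otimes S_2)^*$-acceptable integers that dominates everything in sight.

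\textbf{Execution.} The plan is to unpack each $\mathcal{C}_{S_i}(\mathbf a_i)$ using the reformulation in Lemma~5.2 as the zig-zag
\[
\mathcal{C}(\mathbf a_i')\;\circ\; \mathbb{D}\Gamma^*_{F^{-1}_{(S_i\otimes V_{i})\boxtimes S_i^*}}\;\circ\; \mathcal{C}(\delta_{\sigma S_i}\otimes \mathrm{id}_{V_{i}}),
\]
so that $\mathcal{C}_{S_2}(\mathbf a_2)\circ \mathcal{C}_{S_1}(\mathbf a_1)$ becomes a length-six composition of coevaluation correspondences, partial Frobenii, and $\mathcal{C}$ of the duals. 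On the right-hand side, $\mathcal{C}_{S_1\otimes S_2}(\phi'(\mathbf a_1\otimes \mathbf a_2))$ is a single triple for the representation $S_1\otimes S_2$. I would rewrite the right-hand side using the rigidity identities
\[
\delta_{\sigma(S_1\otimes S_2)} = (\mathrm{id}_{\sigma S_1^*}\otimes \delta_{\sigma S_2}\otimes \mathrm{id}_{\sigma S_1})\circ \delta_{\sigma S_1},\qquad e_{S_1\otimes S_2}=e_{S_2}\circ(\mathrm{id}\otimes e_{S_1}\otimes\mathrm{id}),
\]
and the factorization of the partial Frobenius for the tensor representation $S_1\otimes S_2$ into successive partial Frobenii for $S_2$ and for $S_1$. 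Lemma~5.2, applied twice, then slides the internal evaluations past the intermediate partial Frobenii, producing precisely the length-six composition obtained from the left-hand side. A careful bookkeeping of Tate twists (all even, from the dimensions appearing in the shift $\langle m\dim G\rangle$) shows that the normalizations agree.

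\textbf{Main obstacle.} The substantive geometric input is the compatibility of the partial Frobenius $F^{-1}_{((S_1\otimes S_2)\otimes V_1)\boxtimes (S_1\otimes S_2)^*}$ with the analogous morphisms for $S_1$ and $S_2$ separately. Concretely, viewing an $(m,n)$-restricted local Shtuka with a modification bounded by $S_1\otimes S_2$ as a two-step modification bounded by $S_1$ and then $S_2$, the Frobenius twist that moves the tensor factor from the right to the left end of the chain can be performed one factor at a time; this is a direct check on the construction of \cite[Construction 5.3.12]{xiao2017cycles} but is genuinely where one must verify that the partial Frobenii for representations of $\hat G^s$ introduced in \S 5.3 behave associatively. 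Once this factorization and the rigidity compatibilities above are in hand, the rest is a diagram chase carried out exactly as in \cite[Lemma 6.2.7]{xiao2017cycles}, with the integral coefficient geometric Satake equivalence (Theorem~2.5) replacing its $\bar{\mathbb Q}_\ell$-counterpart. Flatness is not an issue because $V_1,V_2,V_3$ and the auxiliary $S_i$ are projective over $\Lambda$, so $S(\widetilde{V_i})$ and $S(\widetilde{S_i})$ remain perverse under external product by \cite[Lemma 4.1]{mirkovic2007geometric}.
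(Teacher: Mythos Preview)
Your proposal is correct and follows the same approach as the paper, which simply says the lemma ``can be proved following the same idea in the proof of \cite[Lemma 6.2.7]{xiao2017cycles}.'' You have in fact supplied more detail than the paper does: your identification of the partial Frobenius factorization for $S_1\otimes S_2$ as the key geometric input, the use of Lemma~5.2 to slide evaluations past partial Frobenii, and the projectivity remark to handle perversity in the integral setting are exactly the adaptations needed to transport the Xiao--Zhu argument to $\Lambda$-coefficients.
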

\begin{proof}
The lemma can be proved following the same idea in the proof of \cite[Lemma 6.2.7]{xiao2017cycles}.
\end{proof}
\subsubsection{}
We study the endomorphism ring of the unit object in $\textnormal{P}(\sht^{\loc}_{\bar{k}},\Lambda)$. This will be used to prove the "$S=T$" theorem for Shimura sets in \S$12.3$.

Let $\delta_{\mathbf{1}}$ denote the intersection cohomology sheaf $\textnormal{IC}_0$ on  $\sht_0^{\loc(m,n)}$. The group theoretic description of the moduli of restricted local Shtukas (cf. \cite[\S 5.3.2]{xiao2017cycles}) implies that $\sht_0^{\loc(m,n)}$ is perfectly smooth. Thus $\delta_{\mathbf{1}}$ may be realized as 
$$
\delta_{\mathbf{1}}^{m,n}:=\Lambda \langle (m-n)\dim G \rangle\in \textnormal{P}(\sht^{\loc(m,n)}_{0},\Lambda)
$$
for every $m\geq n$. Fix a square root $q^{1/2}$. We write $E$ for $\mathbb{Z}_\ell$
\begin{corollary}
\begin{itemize}
    \item[(1)] There is a natural isomorphism $$
    \cc_{\sht^{\loc}}(\delta_{\mathbf{1}},\delta_{\mathbf{1}})\simeq \mathcal{H}_{G,E}
    $$
    where $\mathcal{H}_{G,E}$ denotes the Hecke algebra $C_c^{\infty}(G(\mathcal{O})\backslash G(F)/G(\mathcal{O}),E)$.
\item[(2)] We denote the map 
$$
\mathcal{S}_{\mathcal{O}_{[\hat{G}\sigma/\hat{G}]},\mathcal{O}_{[\hat{G}\sigma/\hat{G}]}}:
\textnormal{End}_{\textnormal{Coh}^{\hat{G}}(\hat{G}\sigma)}(\mathcal{O}_{[\hat{G}\sigma/\hat{G}]})\rightarrow \cc_{\sht^{\loc}}(\delta_{\mathbf{1}},\delta_{\mathbf{1}})
$$
by $\mathcal{S}_{\mathcal{O}}$ for simplicity.
Under the isomorphism in $(1)$, the map $\mathcal{S}_{\mathcal{O}}\otimes\textnormal{id}_{E[q^{-1/2},q^{1/2}]}$
coincides with the classical Satake isomorphism.
\end{itemize}

\end{corollary}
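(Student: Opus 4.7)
I would first unwind definition (4.9) applied to $\mathcal{F}_1 = \mathcal{F}_2 = \delta_{\mathbf{1}}$, writing $\cc_{\sht^{\loc}}(\delta_{\mathbf{1}},\delta_{\mathbf{1}})$ as a direct sum over $\pi_1(G)$ of colimits over dominant coweights $\lambda$ and $\lambda$-acceptable tuples $(m_1,n_1,m_2,n_2)$ of the spaces $\cc_{\sht^{\lambda,\loc(m_1,n_1)}_{0\mid 0}}(\delta_{\mathbf{1}}^{m_1,n_1},\delta_{\mathbf{1}}^{m_2,n_2})$.  Since the stacks $\sht_0^{\loc(m,n)}$ are perfectly smooth and $\delta_{\mathbf{1}}^{m,n}$ is constant up to shift and twist, each such term is identified with the top Borel--Moore homology of the Hecke correspondence $\sht^{\lambda,\loc(m_1,n_1)}_{0\mid 0}$.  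Next, I would apply Lemma 4.3 together with the perfect smoothness of the forgetful morphism $\psi^{\loc(m,n)}$ to reduce this Borel--Moore homology to that of the Satake correspondence $\textnormal{Gr}^0_{\lambda\mid 0}$, which by Corollary 2.7 is free over $E$ with a basis indexed by the Satake cycles, i.e.\ by dominant coweights $\mu \leq \lambda$.  Passing to the colimit and summing over $\pi_1(G)$ produces a free $E$-module with basis $\mathbb{X}_\bullet^+$, which I would then match with the Cartan-decomposition basis $\{[G(\mathcal{O})\varpi^\mu G(\mathcal{O})]\}$ of $\mathcal{H}_{G,E}$.  Compatibility of composition with convolution follows since both are computed via the same fiber-product construction.

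\textbf{Part (2).}  I would identify $\textnormal{End}_{[\hat{G}\sigma/\hat{G}]}(\mathcal{O}_{[\hat{G}\sigma/\hat{G}]})$ with the algebra $E[\hat{G}\sigma]^{\hat{G}}$ of regular functions on $\hat{G}\sigma$ invariant under $\sigma$-twisted conjugation.  Using the Peter--Weyl filtration of $\mathcal{O}_{\hat{G}}$ with graded pieces $V(\mu)^* \otimes V(\mu)$ and taking $\sigma$-twisted invariants, a set of generators for this ring is furnished by the $\sigma$-twisted characters $\chi^\sigma_\mu(g) := \mathrm{tr}(g\sigma \mid V(\mu))$ of $\sigma$-stable irreducibles $V(\mu)$.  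By Lemma 5.4 applied with $S_1 = \mathbf{1}$ and $W = V(\mu)$, each $\chi^\sigma_\mu$ arises as $\Theta_{V(\mu)}$ of the canonical element $\delta_{\sigma V(\mu)} \in \hg(\mathbf{1}, \sigma V(\mu)^* \otimes V(\mu))$, so $\mathcal{S}_{\mathcal{O}}(\chi^\sigma_\mu) = \mathcal{C}_{V(\mu)}(\delta_{\sigma V(\mu)})$.  Tracing through the construction (5.8)--(5.10), this correspondence is the one associated, via the integral Satake equivalence of Theorem 2.5, to the intersection cohomology sheaf $\textnormal{IC}_\mu$.  Under the isomorphism of Part (1), I expect this to recover the element $q^{-(\rho,\mu)} \sum_{\nu \leq \mu} \dim V(\mu)_\nu \cdot [G(\mathcal{O})\varpi^\nu G(\mathcal{O})] \in \mathcal{H}_{G,E}[q^{\pm 1/2}]$---precisely the image of $\chi^\sigma_\mu$ under the classical Satake isomorphism.

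\textbf{Main difficulty.}  The hardest step will be the careful bookkeeping of half-Tate twists and dimension shifts: the twist $\langle (m-n)\dim G\rangle$ in $\delta_{\mathbf{1}}^{m,n}$, the twist $\langle m\dim G\rangle$ in $S(\widetilde{V})^{\loc(m)}$, the relative-dimension contributions of the perfectly smooth maps $\psi^{\loc(m,n)}$, and the Tate twists introduced by the partial Frobenius correspondences (5.9) must all combine into the single normalizing factor $q^{-(\rho,\mu)}$ appearing in the classical Satake formula.  Establishing this cleanly relies on the semismallness of the convolution morphism and the formula $\dim \textnormal{Gr}_\mu = (2\rho,\mu)$.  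A secondary subtlety, relevant only when $\sigma$ acts nontrivially on $\hat{G}$, is that one must work consistently with $\sigma$-twisted representations (cf.\ Remark 2.6) and $\sigma$-twisted conjugation invariants throughout, matching them against unramified Hecke operators for the quasi-split form.
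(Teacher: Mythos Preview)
Your Part (1) follows the same skeleton as the paper's argument (which itself defers to \cite[Proposition~5.4.4]{xiao2017cycles}): reduce to Borel--Moore homology of the Hecke correspondence and count top-dimensional components.  However, your specific reduction is misstated.  The space $\textnormal{Gr}^0_{\lambda\mid 0}=\textnormal{Gr}_{\leq\lambda}\times_{\textnormal{Gr}_G}\textnormal{Gr}_{\leq 0}$ is a single point (the fibre of the inclusion $\textnormal{Gr}_{\leq\lambda}\hookrightarrow\textnormal{Gr}_G$ over the base point), so it has only one Satake cycle and cannot furnish the basis you describe.  The object whose components are indexed by $\mu\leq\lambda$ is rather $\sht^{\lambda,\loc(m_1,n_1)}_{0\mid 0}$ itself (or its presentation via $\textnormal{Gr}_{\leq\lambda}$ together with a Frobenius compatibility), and the identification of its irreducible components with the Cartan basis is what \cite[Proposition~5.4.4]{xiao2017cycles} supplies.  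Lemma~4.4 does not reduce this to a Satake correspondence of local Hecke stacks in the way you suggest.

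For Part (2) there is a genuine gap, and your diagnosis of the ``main difficulty'' misses it.  The hard step is not the bookkeeping of shifts and twists but rather the identification of the cohomological correspondence $\mathcal{C}_{V(\mu)}(\delta)$ with an explicit function on $G(\mathcal{O})\backslash G(F)/G(\mathcal{O})$.  Your sentence ``I expect this to recover the element $q^{-(\rho,\mu)}\sum_{\nu\leq\mu}\dim V(\mu)_\nu\cdot[\varpi^\nu]$'' is exactly the point requiring proof.  The paper supplies the missing mechanism: one rewrites $\mathcal{C}_{V_\nu}(\mathbf{a})$ as the composite $\delta_{\textnormal{IC}_{\nu^*}}\circ\Gamma^*_{\sigma\times\textnormal{id}}\circ e_{\textnormal{IC}_{\nu^*}}$, which lands in $\textnormal{H}^{\textnormal{BM}}_0(\textnormal{Gr}_{\nu^*}(k))$ and, by the Grothendieck--Lefschetz sheaf--function dictionary, equals the function $x\mapsto\textnormal{tr}(\phi_x\mid\textnormal{Sat}(V_{\nu^*})_{\bar{x}})$.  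That trace-of-Frobenius formula \emph{is} the classical Satake transform of $\chi_\nu$, and nothing in your outline invokes it.  The paper also adopts a different global strategy: it first establishes the statement for $E=\mathbb{Q}_\ell$ by the above argument (citing \cite[Theorem~6.0.1(2)]{xiao2017cycles}), and then deduces the case $E=\mathbb{Z}_\ell$ by observing that both sides are free $\mathbb{Z}_\ell$-modules and comparing after tensoring with $\mathbb{Q}_\ell[q^{\pm 1/2}]$.  Your direct attack on the integral case would require controlling the integral structure of the trace-of-Frobenius function, which is more delicate.
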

\begin{proof}
Recall the definition of the Borel-Moore homology $\textnormal{H}^{\textnormal{BM}}_i(X)$ for a perfect pfp algebraic space which is defined over an algebraically closed field (cf. \cite[A.1.3]{xiao2017cycles}). Assume $X_1$ and $X_2$ to be perfectly smooth algebraic spaces of pure dimension. Let $X_1\leftarrow C\rightarrow X_2$ be a correspondence. Then
\begin{align}
     & \cc_C\big((X_1,E\langle d_1 \rangle),(X_2,E\langle d_2 \rangle)\big) \\ \notag
     = & \textnormal{Hom}_{D_b^c(C,E)}\big(E\langle d_1 \rangle, \omega_C\langle d_2-2\dim X_2\big >   \big) \\ \notag
    = & \textnormal{H}^{\textnormal{BM}}_{2\dim X_2+d_1-d_2}(C).\notag
\end{align}
Then if $2\dim C=2\dim X_2+d_1-d_2$, the cohomological correspondences from $(X_1,E\langle d_1 \rangle)$ to $(X_2,E\langle d_2 \rangle)$ can be identified as the set of irreducible components of $C$ of maximal dimension.

For a perfect pfp algebraic space $X$ of dimension $d$, define $I$ to be the set of top-dimensional irreducible components of $X$. Then $\textnormal{H}^{\textnormal{BM}}_d(I)$ is the free $E$-module generated by the $d$-dimensional irreducible components of $X$, and thus can be identified with the space $C(I,E)$ of $E$-valued functions on $I$. The map $f\mapsto \sum_{C_i\in I}f(C_i)[C_i]$ establishes a bijection 
\begin{equation}
C(I,E)=\textnormal{H}^{\textnormal{BM}}_d(X). 
\end{equation}

With the above preparations, we get an isomorphism
\begin{equation}
    \mathcal{H}_{G,E}\simeq \cc_{\sht^{\loc}}(\delta_{\mathbf{1}},\delta_{\mathbf{1}}),
\end{equation}
via a similar argument as for \cite[Proposition 5.4.4]{xiao2017cycles}, and we finish the proof of $(1)$.

To prove part $(2)$, we first note that the statement holds for $E=\ql$ by \cite[Theorem 6.0.1(2)]{xiao2017cycles}. We sketch the proof here. Let $\mu$ be a central minuscule dominant coweight, and $\nu$ be a dominant coweight such that $\sigma(\nu)=\nu$. Choose $(m_1,n_1,m_2,n_2)$ to be $(\nu+\mu,\nu)$-acceptable. Take $\mathbf{a}\in\hg(V_{\nu}\otimes V_{\mu}\otimes V_{\nu^*},V_{\mu})$ to be the map induced by the evaluation map $\mathbf{e}_{\nu}:V_{\nu}\otimes V_{\nu^*}\rightarrow \mathbf{1}$.  
Consider the following diagram
$$
\begin{tikzcd}
\textnormal{pt}\arrow[r,leftarrow] & \textnormal{Gr}_{\leq\nu^*}\arrow[r,"\Delta"] & \textnormal{Gr}_{\leq\nu^*}\times \textnormal{Gr}_{\leq\nu^*}\arrow[r,leftarrow,"\sigma\times \textnormal{id}"]& \textnormal{Gr}_{\mu^*}\times \textnormal{Gr}_{\leq\mu^*}\arrow[r,leftarrow,"\Delta"] & \textnormal{Gr}_{\leq\nu^*} \arrow[r] & \textnormal{pt}.
\end{tikzcd}
$$
Recall the cohomological correspondences $\delta_{\textnormal{Ic}_{\nu^*}}$ and $e_{\textnormal{Ic}_{\nu^*}}$ defined in \cite[\S A.2.3.4]{xiao2017cycles}. Then 
$$
\mathcal{C}^{\loc(m_1,n_1)}_{V_{\nu}}(\mathbf{a})=\delta_{\textnormal{IC}_{\nu^*}}\circ \Gamma_{\sigma\times \textnormal{id}}^*\circ e_{\textnormal{Ic}_{\nu^*}}\in \textnormal{H}^{\textnormal{BM}}_{0}(\textnormal{Gr}_{\nu^*}(k)),
$$
 and the cohomological correspondence $\mathcal{C}^{\loc(m_1,n_1)}_{V_{\nu}}(\mathbf{a})$ can be identified with the function $f$ on $\textnormal{Gr}_{\nu^*}(k)$ whose value at $x\in \textnormal{Gr}_{\nu^*}(k)$ is given by $\textnormal{tr}(\phi_x\mid \textnormal{Sat}(V_{\nu^*})_{\bar{x}})$. Then up to a choice of $q^{1/2}$, the map $S_{\mathcal{O},\mathcal{O}}\otimes_{\ql}\textnormal{id}_{\ql[q^{1/2},q^{-1/2}]}$ coincides with the classical Satake isomorphism.

Now we come back to the case $E=\mathbb{Z}_{\ell}$. Write $Q$ for $\ql[q^{1/2},q^{-1/2}]$. The above argument shows that
$$
\mathcal{S}_{\mathcal{O}}\otimes Q:\textnormal{End}_{\textnormal{Coh}^{\hat{G}}(\hat{G}\sigma)}(\mathcal{O}_{[\hat{G}\sigma/\hat{G}]})\otimes_{\mathbb{Z}_{\ell}}Q\rightarrow \cc_{\sht^{\loc}_{\bar{k}}}(\delta_{\mathbf{1}},\delta_{\mathbf{1}})\otimes_{\mathbb{Z}_{\ell}}Q
$$
coincide with the classical Satake isomorphism. Note that
$$
\textnormal{End}_{\textnormal{Coh}^{\hat{G}}(\hat{G}\sigma)}(\mathcal{O}_{[\hat{G}\sigma/\hat{G}]})\otimes_{\mathbb{Z}_{\ell}}Q\simeq \mathbb{Z}_{\ell}[\hat{G}]^{\hat{G}}\otimes_{\mathbb{Z}_{\ell}}Q,
$$
where $\hat{G}$ acts on $\hat{G}$ by the $\sigma$-twisted conjugation. Considering the Satake transfer of the image of $\mathbb{Z}_{\ell}$-basis of  $\mathbb{Z}_{\ell}[\hat{G}]^{(\hat{G})}$ in $\mathbb{Z}_{\ell}[\hat{G}]^{(\hat{G})}\otimes_{\mathbb{Z}_{\ell}}Q$, we conclude the proof of $(2)$.
\end{proof}

\section{Cohomological correspondences between Shimura varieties}
In this section, we adapt the machinery developed in previous sections and apply it to the study of the cohomological correspondences between different Hodge type Shimura varieties following the idea of \cite{xiao2017cycles}.

\subsection{Preliminaries}
Let $(G,X)$ be a Shimura datum. For the rest of this paper, $E$ will denote the reflex field of $(G,X)$ (cf.\cite{milne2005introduction}). Let $K\subset G(\mathbb{A}_f)$ be a (sufficiently small) open compact subgroup and denote by $\textnormal{Sh}_K(G, X)$ the corresponding Shimura variety defined over $E$. Fix a prime $p > 2$ such that $K_p$ is a
hyperspecial subgroup of $G(\mathbb{Q}_p)$. We write $\underline{G}$ for the reductive group which extends $G$ to $\mathbb{Z}_{(p)}$ and such that $\underline{G}(\mathbb{Z}_p)=K_p$. Choose $\nu$ to be a place of $E$ lying over $p$. We write $\mathcal{O}_{E,(\nu)}$ for the localization of $\mathcal{O}_E$ at $\nu$. Results of Kisin \cite{kisin2010integral} and Vasiu \cite{vasiu2007good} state that for any Hodge type Shimura datum $(G,X)$, there is a smooth integral canonical model $\mathcal{S}_K(G,X)$
of $\textnormal{Sh}_K(G, X)$, which is defined over $\mathcal{O}_{E,(\nu)}$. Let $k_{\nu}$
denote the residue field of $\mathcal{O}_{E,\nu}$ and fix an algebraic closure $\bar{k}_{\nu}$ of $k_{\nu}$. We denote by
$\textnormal{Sh}_{\mu,K} := (\mathcal{S}_K (G,X)\otimes k_{\nu})^{\textnormal{pf}}$
the perfection of the special fiber of $\mathcal{S}_K(G,X)$. The perfection of mod $p$ fibre of Shimura varieties and moduli of local Shtukas are related by a map $\loc_p:\textnormal{Sh}_{\mu,K}\rightarrow \sht_{\mu}^{\loc}$. The construction of $\loc_p$ is via a $\underline{G}$-torsor over the crystalline cite $(\mathcal{S}_{K,k_{\nu}}/\mathcal{O}_{E,\nu})_{\textnormal{CRIS}}$ and we refer to \cite[\S 7.2.1]{xiao2017cycles} for a detailed discussion. In the Siegel case, it may be understood as the perfection of the morphism sending an abelian variety to its underlying $p$-divisible group. We need the following result of Xiao-Zhu \cite[Proposition 7.2.4]{xiao2017cycles} for our proof of the main theorem.
\begin{proposition}
Let $(m,n)$ be a pair of $\mu$-large integers. The morphism 
$$
\loc_p(m,n):=\res_{m,n}\circ \loc_p:\textnormal{Sh}_{\mu}\rightarrow \sht^{\loc (m,n)}_{\mu}
$$
is perfectly smooth.
\end{proposition}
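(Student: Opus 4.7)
The plan is to factor the problem through the forgetful morphism $\psi^{\loc(m,n)}:\sht^{\loc(m,n)}_\mu\to \hk^{\loc(m)}_\mu$, which by the discussion in \S$4.1$ is perfectly smooth of relative dimension $n\dim G$. Using the Cartesian description $\sht^{\loc(m,n)}_\mu\cong \hk^{\loc(m)}_\mu\times_{\mathbb{B}L^nG\times \mathbb{B}L^nG}\mathbb{B}L^nG$ together with standard base-change reasoning for smooth morphisms of perfect pfp algebraic stacks, it will suffice to prove that the composition $\psi^{\loc(m,n)}\circ \loc_p(m,n):\textnormal{Sh}_\mu\to\hk^{\loc(m)}_\mu$ is perfectly smooth.

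Next I would invoke a local-model-style diagram attached to $\loc_p$. The crystalline $\underline{G}$-torsor on $(\mathcal{S}_{K,k_\nu}/\mathcal{O}_{E,\nu})_{\textnormal{CRIS}}$ that defines $\loc_p$ gives, after passing to the $p^m$-truncated level, an $L^mG$-torsor $\widetilde{\textnormal{Sh}}_\mu\to \textnormal{Sh}_\mu$ equipped with a canonical morphism $\widetilde{\loc}_p:\widetilde{\textnormal{Sh}}_\mu\to \textnormal{Gr}_{\leq\mu}$ recording the Hodge filtration on the trivialized crystal. This fits into a commutative square
\begin{equation*}
\begin{tikzcd}
\widetilde{\textnormal{Sh}}_\mu \arrow[r,"\widetilde{\loc}_p"] \arrow[d] & \textnormal{Gr}_{\leq \mu} \arrow[d] \\
\textnormal{Sh}_\mu \arrow[r,"\psi^{\loc(m,n)}\circ\loc_p(m,n)"] & \hk^{\loc(m)}_{\mu}=[L^mG\backslash \textnormal{Gr}_{\leq\mu}]
\end{tikzcd}
\end{equation*}
in which the two vertical maps are $L^mG$-torsors. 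Smoothness of the bottom arrow therefore reduces to smoothness of $\widetilde{\loc}_p$.

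To handle $\widetilde{\loc}_p$, I would use that $\mu$ is minuscule (as it comes from a Hodge type Shimura datum), so $\textnormal{Gr}_{\leq\mu}=\mathring{\textnormal{Gr}}_\mu$ is the perfection of a smooth projective variety of dimension $(2\rho,\mu)=\dim\textnormal{Sh}_\mu$. Since both source and target are then perfectly smooth of the same relative dimension, it will be enough to verify surjectivity of the differential at every $\bar{k}_\nu$-point. This is precisely the content of the Grothendieck--Messing / Serre--Tate deformation comparison extended to Hodge type by Kisin and Vasiu: the deformation functor of a point of $\textnormal{Sh}_\mu$ is identified, via the universal crystalline $\underline{G}$-torsor with its Hodge filtration, with the deformation functor of the associated point of $\textnormal{Gr}_{\leq\mu}$.

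The main obstacle is the last deformation-theoretic comparison, which depends on the construction of the smooth integral canonical models in \cite{kisin2010integral} and \cite{vasiu2007good} and on the fact that the Hodge tensors defining the Shimura datum deform uniquely along the local model map. Once this étaleness of $\widetilde{\loc}_p$ at completed local rings is in hand, perfect smoothness descends along the $L^mG$-torsors, giving smoothness of $\psi^{\loc(m,n)}\circ \loc_p(m,n)$ and hence of $\loc_p(m,n)$ itself.
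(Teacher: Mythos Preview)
The paper does not prove this statement itself; it records it as a citation to \cite[Proposition~7.2.4]{xiao2017cycles}. Your outline contains the right ingredients --- the local model diagram and the Kisin--Vasiu deformation-theoretic input --- but the opening reduction is not valid.

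You assert that, since $\psi^{\loc(m,n)}:\sht^{\loc(m,n)}_\mu\to\hk^{\loc(m)}_\mu$ is perfectly smooth, it suffices to prove that the composite $\psi^{\loc(m,n)}\circ\loc_p(m,n):\textnormal{Sh}_\mu\to\hk^{\loc(m)}_\mu$ is perfectly smooth. But smoothness does not cancel on the left: if $g$ is smooth and $g\circ f$ is smooth, $f$ need not be (take $f$ the inclusion of a closed point into $\mathbb{A}^1$ and $g:\mathbb{A}^1\to\mathrm{Spec}\,k$). In cotangent-complex language, the triangle
\[
\loc_p(m,n)^{*}L_{\sht/\hk}\longrightarrow L_{\textnormal{Sh}_\mu/\hk}\longrightarrow L_{\textnormal{Sh}_\mu/\sht}
\]
only tells you that $L_{\textnormal{Sh}_\mu/\sht}$ has tor-amplitude in $[-1,0]$; you still need the first arrow to be a locally split injection, and that is exactly the content you are trying to establish. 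The ``standard base-change reasoning'' you invoke does not supply this, because $\loc_p(m,n)$ is not itself obtained as a base change of the map to $\hk^{\loc(m)}_\mu$. There is also a dimension mismatch later on: your $\widetilde{\textnormal{Sh}}_\mu$ is an $L^mG$-torsor over $\textnormal{Sh}_\mu$, hence of dimension $d+m\dim G$, while $\textnormal{Gr}_{\leq\mu}$ has dimension $d$; so $\widetilde{\loc}_p$ cannot be \'etale, and the deformation statement you quote (identifying deformations of a point of $\textnormal{Sh}_\mu$ with those of the corresponding point of $\textnormal{Gr}_{\leq\mu}$) concerns $\textnormal{Sh}_\mu$ itself, not the torsor $\widetilde{\textnormal{Sh}}_\mu$.

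The argument in \cite{xiao2017cycles} avoids this by building the local model diagram directly over a smooth atlas of $\sht^{\loc(m,n)}_\mu$, keeping track of the Frobenius-isomorphism datum $\Psi$ rather than projecting it away to $\hk^{\loc(m)}_\mu$. The deformation-theoretic input then shows smoothness of the lifted map between atlases, which descends.
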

\subsection{\'{E}tale local systems}
Let $\ell\neq p$ be a prime number. Assume that $\rho:G\rightarrow GL_{\ql}(W)$ is a $\ql$-representation of $G$. If $K\subset G(\mathbb{A}_f)$ is sufficiently small, we associate an \'{e}tale local system $\mathcal{L}_{\ell,W}$ on $\textnormal{Sh}_{\mu,K}$ to $W$ following the idea of \cite[\S 4]{liu2017rigidity} and \cite[\S III.6]{milne1990canonical} as follows.

Write $K=K_{\ell}K^{\ell}$ with $K_{\ell}\subset G(\ql)$ and $K^{\ell}\subset G(\mathbb{A}_f^{\ell})$. The representation $\rho$ restricts to a representation
$$
\rho_{K_{\ell}}:K(\ql)\rightarrow G(\ql)\rightarrow GL(W_{\ql}).
$$
Note that $K(\ql)$ is compact, and there exists a lattice $\Lambda_{W,\ell}\subset W_{\ql}$ fixed by $K(\ql)$. Now we vary the levels at $\ell$. Define 
$$
K_{\ell}^{(n)}:=K_{\ell}\cap \rho_{K(\ql)}^{-1}(\{g\in GL(\Lambda_{W,\ell})\mid g\equiv 1 \mod \ell^{n}   \}).
$$
Then we get a system of open neighborhoods of $1\in G(\ql)$. For each $n$, the construction of $K_{\ell}^{(n)}$ gives rise to a representation
$$
\rho_{K_{\ell}}^n: K_{\ell}/K_{\ell}^{(n)}\rightarrow GL(\Lambda_{W,\ell}/\ell^n\Lambda_{W,\ell}).
$$
The natural projection map $\textnormal{Sh}_{\mu,K_{\ell}^{(n)}K^{\ell}}\rightarrow \textnormal{Sh}_{\mu,K_{\ell}K^{\ell}}$ is a finite \'{e}tale cover with the group of deck transformations being $K_{\ell}/K_{\ell}^{(n)}$. Then the trivial   \'{e}tale $\mathbb{Z}/\ell^n\mathbb{Z}$-local system $\textnormal{Sh}_{\mu,K_{\ell}^{(n)}K^{\ell}} \times \Lambda_{W,\ell}/\ell^n\Lambda_{W,\ell}$ on $\textnormal{Sh}_{\mu,K_{\ell}^{(n)}K^{\ell}}$ gives rise to the \'{e}tale $\mathbb{Z}/\ell^{n}\mathbb{Z}$-local system
$$
\mathcal{L}_{W,\ell,n}:=\textnormal{Sh}_{\mu,K_{\ell}^{(n)}K^{\ell}} \times^{K_{\ell}/K_{\ell}^{(n)}} \Lambda_{W,\ell}/\ell^n\Lambda_{W,\ell}.
$$
Let 
\begin{equation}
    \mathcal{L}_{W,\mathbb{Z}_{\ell}}:=\varprojlim_{n}\mathcal{L}_{W,\ell,n}.
\end{equation}
This is an \'{e}tale $\mathbb{Z}_{\ell}$-local system on $\textnormal{Sh}_{\mu,K}$. It can be checked that $\mathcal{L}_{W,\ql}:=\mathcal{L}_{W,\mathbb{Z}_{\ell}}\otimes \mathbb{Q}$ is an  \'{e}tale $\ql$-local system on $\textnormal{Sh}_{\mu,K}$ which is independent of the choice of $\Lambda_{\ell}$.

\subsection{Main theorem}
Let $(G_1, X_1)$ and $(G_2, X_2)$ be two Hodge type Shimura data (cf. \cite{milne2005introduction}) equipped with an isomorphism $\theta:G_{1,\mathbb{A}_f}\simeq G_{2,\mathbb{A}_f}$. Let
$\{\mu_i\}$ denote the conjugacy class of Hodge cocharacters determined by $X_i$ and consider them as dominant characters of $\hat{T}$. In particular, $\mu_1$ and $\mu_2$ are both minuscule. Then \cite[Corollary 2.1.5]{xiao2017cycles} implies that there is a canonical inner twist $\Psi_{\mathbb{R}}:G_1\rightarrow G_2$ over $\mathbb{C}$. Recall notations in \S 1.3. We define $\mu_{i,\textnormal{ad}}$ to be the composition of $\mu_i$ with the quotient $G\rightarrow G_{\textnormal{ad}}$ and consider it as a character of $\hat{T}_{\textnormal{sc}}$. We assume that 
$$
\mu_{1,\textnormal{ad}}\mid _{Z(\hat{G}_{\textnormal{sc}}^{\Gamma_{\mathbb{Q}}})}=\mu_{2,\textnormal{ad}}\mid _{Z(\hat{G}_{\textnormal{sc}}^{\Gamma_{\mathbb{Q}}})}.
$$
It follows from \cite[Corollary 2.1.6]{xiao2017cycles} that $\Psi_{\mathbb{R}}$ comes from a unique global inner twist $\Psi:G_{1\bar{\mathbb{Q}}}\rightarrow G_{2\bar{\mathbb{Q}}}$ such that $\Psi=\textnormal{Int}(h)\circ \theta$, for some $\theta:G_{1,\mathbb{A}_f}\simeq G_{2,\mathbb{A}_f}$ and $h\in G_{2,\textnormal{ad}}(\bar{\mathbb{A}}_f)$.

We assume that $K_{i}\subset G(\mathbb{A}_f)$ to be sufficiently small such that $\theta K_1=K_2$.  Choose a prime $p$ such that $K_{1,p}$ (and therefore $K_{2,p}$) is hyperspecial. Let $\underline{G_i}$ be the integral model of $G_{i,\mathbb{Q}_p}$
over $\mathbb{Z}_p$ determined by $K_{i,p}$. Then  $\underline{G_1}\simeq \underline{G_2}$, and we can thus identify
their Langlands dual groups $(\hat{G},\hat{B},\hat{T})$. Choose an isomorphism $\iota:\mathbb{C}\simeq \bar{\mathbb{Q}}_p$. Let $\nu\mid p$ be a place of the compositum of reflex fields of $(G_i,X_i)$ determined
by our choice of isomorphism $\iota$. We write $\textnormal{Sh}_{\mu_i}$ for the mod $p$ fibre of the canonical integral model of $\textnormal{Sh}_{K_i}(G_i,X_i)$ base change to $k_{\nu}$. We make the following assumption
\begin{equation}
    \mu_{1}\mid _{Z(\hat{G}_{}^{\Gamma_{\mathbb{Q}_p}})}= \mu_{2}\mid _{Z(\hat{G}_{}^{\Gamma_{\mathbb{Q}_p}})}.
\end{equation}
The assumption guarantees the existence of the ind-scheme $\textnormal{Sh}_{\mu_1\mid\mu_2}$ which fits into the following commutative diagram
\begin{equation}
    \begin{tikzcd}[sep=huge]
\textnormal{Sh}_{\mu_1,K_1} \arrow[r,leftarrow, "\overleftarrow{h}_{\mu_1}"] \arrow[d,"\loc_p"] & \textnormal{Sh}_{\mu_1\mid\mu_2} \arrow[r,rightarrow,"\overrightarrow{h}_{\mu_2}"] \arrow[d] & \textnormal{Sh}_{\mu_2,K_2} \arrow[d,"\loc_p"] \\
\sht^{\loc}_{\mu_1}  \arrow[r,leftarrow, "\overleftarrow{h}_{\mu_1}^{\loc}"] & \sht^{\loc}_{\mu_1\mid\mu_2} \arrow[r,rightarrow, "\overrightarrow{h}_{\mu_2}^{\loc}"] & \sht^{\loc}_{\mu_2}
\end{tikzcd},
\end{equation}
making both squares to be Cartesian. 
\begin{remark}
In the case that $(G_1,X_1)=(G_2,X_2)$, $\textnormal{Sh}_{\mu_1\mid\mu_2}$ is the perfection of the mod p fibre of
a natural integral model of some Hecke correspondence. If $(G_1,X_1)\neq (G_2,X_2)$, then $\textnormal{Sh}_{\mu_1\mid\mu_2}$ can be regarded as “exotic Hecke correspondences” between mod p fibres of different Shimura varieties. We refer to \cite[\S7.3.3,\S 7.3.4]{xiao2017cycles} for a detailed discussion.
\end{remark}

Let $(G_i,X_i)$ $i=1,2,3$ be three Hodge type Shimura data, together with the isomorphisms $\theta_{i,j}:G_{i,\mathbb{A}_f}\simeq G_{j,\mathbb{A}_f}$ satisfying the natural cocycle condition. Choose a common level $K$ using the isomorphism $\theta_{i,j}$. Let $p$ be an unramified prime,
such that the assumption $(12.2)$ holds for each pair of $((G_i,X_i),(G_j,X_j))$. Choose a half Tate twist $\ql(1/2)$.

Let $V_i:=V_{\mu_i}$ be the highest weight representation of $\hat{G}_{\ql}$ of highest weight $\mu_i$. Write $\widetilde{V_i}\in \textnormal{Coh}^{\hat{G}_{\ql}}(\hat{G}_{\ql}\sigma)$ for the vector bundle associated to $V_i$ analogous to \S $5.1.3$. Recall from \S $6.1$ that, to each representation $W$ of $G_{\ql}$, we can attach the \'etale local system $\mathcal{L}_{W,\ql}$ on $\textnormal{Sh}_{\mu_i}$. Let $d_i=\langle  2\rho,\mu_i \rangle=\dim \textnormal{Sh}_K(G_i,X_i)$. Denote the global section of the structure sheaf on the quotient stack $[\hat{G}\sigma/\hat{G}]$ by $\mathcal{J}$, and the prime-to-$p$ Hecke algebra by $\mathcal{H}^p$.
\begin{theorem}
There exists a map 
\begin{equation}
  \textnormal{Spc}:\textnormal{Hom}_{\textnormal{Coh}^{\hat{G}_{\ql}}(\hat{G}_{\ql}\sigma)}(\widetilde{V_1},\widetilde{V_2})\rightarrow  \textnormal{Hom}_{\mathcal{H}^p\otimes \mathcal{J}}(\textnormal{H}_c^{*}(\textnormal{Sh}_{\mu_1},\mathcal{L}_{W,\ql}\langle d_1 \rangle), \textnormal{H}_c^{*}(\textnormal{Sh}_{\mu_2},\mathcal{L}_{W,\ql}\langle d_2 \rangle),
\end{equation}
which is compatible with compositions on the source and target. 
\end{theorem}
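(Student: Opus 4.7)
The plan is to transport the local cohomological correspondence produced by Theorem~5.1 to the Shimura variety side via the localization map $\loc_p$, and then package the result as a map on compactly supported cohomology. More precisely, given $\alpha\in \textnormal{Hom}_{\textnormal{Coh}^{\hat{G}}(\hat{G}\sigma)}(\widetilde{V_1},\widetilde{V_2})$, Theorem~5.1 produces a cohomological correspondence $\mathcal{S}_{V_1,V_2}(\alpha)$ between $S(\widetilde{V_1})$ and $S(\widetilde{V_2})$ which, at finite level, is supported on some $\sht^{\lambda,\loc(m_1,n_1)}_{V_1\mid V_2}$ for appropriate auxiliary $\lambda$ and $(m_i,n_i)$. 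Using the diagram $(6.3)$ together with Proposition~6.1 (perfect smoothness of $\loc_p(m,n)$) and the Cartesianness of the squares, I will pull this correspondence back along $\loc_p$ to obtain a cohomological correspondence on $\textnormal{Sh}_{\mu_1\mid\mu_2}$ between $\loc_p^{\star}S(\widetilde{V_1})$ and $\loc_p^{\star}S(\widetilde{V_2})$.

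Next I identify these pulled-back sheaves in terms of constant sheaves on the Shimura varieties. Since each $\mu_i$ comes from a Shimura datum and is therefore minuscule, the Schubert variety $\textnormal{Gr}_{\mu_i}$ is smooth of dimension $d_i=\langle 2\rho,\mu_i\rangle$ and $\textnormal{IC}_{\mu_i}\cong \ql\langle d_i\rangle$; consequently $\loc_p^{\star}S(\widetilde{V_i})$ is the constant sheaf $\ql\langle d_i\rangle$ on $\textnormal{Sh}_{\mu_i}$. To introduce the coefficient local system, I tensor the correspondence on $\textnormal{Sh}_{\mu_1\mid\mu_2}$ with $\mathcal{L}_{W,\ql}$ pulled back from either leg. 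The required canonical isomorphism $\overleftarrow{h}_{\mu_1}^{*}\mathcal{L}_{W,\ql}\simeq \overrightarrow{h}_{\mu_2}^{*}\mathcal{L}_{W,\ql}$ on $\textnormal{Sh}_{\mu_1\mid\mu_2}$ follows, in the case $(G_1,X_1)=(G_2,X_2)$, from the usual Hecke-equivariance of the tower construction of \S6.2; in the exotic case it is furnished by the prime-to-$p$ identification $\theta:G_{1,\mathbb{A}_f}\simeq G_{2,\mathbb{A}_f}$ that underlies the very definition of $\textnormal{Sh}_{\mu_1\mid\mu_2}$. Pushing forward to a point and invoking proper base change then produces the desired map $\textnormal{Spc}(\alpha):\textnormal{H}_c^{*}(\textnormal{Sh}_{\mu_1},\mathcal{L}_{W,\ql}\langle d_1\rangle)\to \textnormal{H}_c^{*}(\textnormal{Sh}_{\mu_2},\mathcal{L}_{W,\ql}\langle d_2\rangle)$.

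Finally, I check the $\mathcal{H}^p\otimes \mathcal{J}$-equivariance and compatibility with composition. The $\mathcal{H}^p$-action commutes with $\textnormal{Spc}$ because $\mathcal{H}^p$ operates through prime-to-$p$ level structure, whereas the construction of $\textnormal{Spc}(\alpha)$ is carried out entirely at $p$ via $\loc_p$; the two commute by a standard base-change argument. Compatibility with the $\mathcal{J}$-action reduces, via Lemma~5.4 on functoriality of $\mathcal{S}$ in composition, to Corollary~5.5, which identifies the action of $\textnormal{End}(\mathcal{O}_{[\hat{G}\sigma/\hat{G}]})$ on $\cc_{\sht^{\loc}}(\delta_{\mathbf{1}},\delta_{\mathbf{1}})$ with the classical Satake isomorphism. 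Compatibility of $\textnormal{Spc}$ with compositions in $\textnormal{Coh}^{\hat{G}}(\hat{G}\sigma)$ is inherited from Lemma~5.4 together with the fact that pulling back cohomological correspondences along perfectly smooth morphisms commutes with composition. I anticipate the main obstacle to be the identification of pulled-back local systems on $\textnormal{Sh}_{\mu_1\mid\mu_2}$ in the exotic case $(G_1,X_1)\neq (G_2,X_2)$: one must track how the inner twist $\Psi$ and the identification $\theta$ enter the definition of this exotic Hecke correspondence, and verify that the resulting canonical isomorphism of local systems intertwines with the pulled-back local-Shtuka correspondence. The central-character hypothesis $(6.2)$ is precisely what makes this matching possible.
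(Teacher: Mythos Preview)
Your proposal is correct and follows essentially the same route as the paper: apply Theorem~5.1, pull back along $\loc_p$ using perfect smoothness (Proposition~6.1) and the Cartesian squares, identify the pullbacks as constant sheaves via minusculeness of $\mu_i$, pass to $\mathcal{L}_W$-coefficients, and take compactly supported cohomology. Two implementation details differ. First, the paper reduces to $\mathbb{Z}_\ell$-coefficients at the outset by choosing lattices $\Lambda_i\subset V_i$ and using the isomorphism $(6.5)$, since Theorem~5.1 is stated only for $\Lambda=\mathbb{Z}_\ell,\mathbb{F}_\ell$; it tensors back up to $\ql$ only after the full construction. Second, rather than directly invoking a canonical isomorphism $\overleftarrow{h}_{\mu_1}^{*}\mathcal{L}_W\simeq\overrightarrow{h}_{\mu_2}^{*}\mathcal{L}_W$ and tensoring, the paper builds the coefficient-change map $\mathfrak{C}_W$ explicitly: it passes to the finite \'etale cover at level $K_\ell^{(n)}K^\ell$ (where $\mathcal{L}_{W,\ell,n}$ trivializes), tensors with the constant module $\Lambda_{W,\ell}/\ell^n\Lambda_{W,\ell}$, and descends via $K_\ell/K_\ell^{(n)}$-equivariance of the pulled-back correspondence. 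This tower construction is exactly the device that resolves the ``main obstacle'' you anticipated, and it works uniformly in the exotic case without separately analyzing $\theta$ and $\Psi$. One small correction: Corollary~5.5 is not what gives $\mathcal{J}$-equivariance; that follows directly from compatibility with composition (Lemma~5.4), since $\mathcal{J}=\textnormal{End}(\mathcal{O}_{[\hat{G}\sigma/\hat{G}]})$ acts through endomorphisms on each side. Corollary~5.5 is invoked only later (Proposition~6.4) to identify this action with the classical Satake isomorphism in the Shimura-set case.
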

\begin{proof}
Choose a lattice $\Lambda_i\in\textnormal{Rep}_{\mathbb{Z}_{\ell}}(\hat{G}_{\mathbb{Z}_{\ell}})$ in $V_i$. We denote by $\widetilde{\Lambda_i}\in\textnormal{Coh}^{\hat{G}_{\mathbb{Z}_{\ell}}}(\hat{G}_{\mathbb{Z}_{\ell}}\sigma)$ the coherent sheaf which corresponds to $\Lambda_i$ as in \S $5.1$. Then
\begin{align}
    \textnormal{Hom}_{\textnormal{Coh}^{\hat{G}_{\ql}}(\hat{G}_{\ql}\sigma)}(\widetilde{V_1},\widetilde{V_2}) & \simeq \textnormal{Hom}_{\hat{G}_{\ql}}(V_1,V_2\otimes \ql[\hat{G}])\\ \notag
    &\simeq \textnormal{Hom}_{\hat{G}_{\ql}} (\Lambda_1\otimes_{\mathbb{Z}_{\ell}}\ql,(\Lambda_2\otimes_{\mathbb{Z}_{\ell}} \mathbb{Z}_{\ell}[\hat{G}])\otimes_{\mathbb{Z}_{\ell}}\ql)\\ \notag
    & \simeq \textnormal{Hom}_{\hat{G}_{\mathbb{Z}_{\ell}}}(\Lambda_1,\Lambda_2\otimes_{\mathbb{Z}_{\ell}}\mathbb{Z}_{\ell}[\hat{G}])\otimes_{\mathbb{Z}_{\ell}}\ql\\ \notag
    &  \simeq \textnormal{Hom}_{\textnormal{Coh}^{\hat{G}_{\mathbb{Z}_{\ell}}}(\hat{G}_{\mathbb{Z}_{\ell}}\sigma)}(\widetilde{\Lambda_1},\widetilde{\Lambda_2})\otimes_{\mathbb{Z}_{\ell}}\ql.
\end{align}
By Theorem $5.1$, we get a map
\begin{equation}
\mathcal{S}_{\Lambda_1,\Lambda_2}:\textnormal{Hom}_{\textnormal{Coh}^{\hat{G}_{\mathbb{Z}_{\ell}}}(\hat{G}_{\mathbb{Z}_{\ell}}\sigma)}(\widetilde{\Lambda_1},\widetilde{\Lambda_2})\rightarrow \cc_{\sht^{\loc}}( S(\widetilde{\Lambda_1}), S(\widetilde{\Lambda_2})). 
\end{equation}
Combining $(6.5)$ with $(6.6)$, we get the following map
\begin{equation}
    \textnormal{Hom}_{\textnormal{Coh}^{\hat{G}_{\ql}}(\hat{G}_{\ql}\sigma)}(\widetilde{V_1},\widetilde{V_2})\rightarrow \cc_{\sht^{\loc}}( S(\widetilde{\Lambda_1}), S(\widetilde{\Lambda_2}))\otimes_{\mathbb{Z}_{\ell}}\ql.
\end{equation}
Choose a dominant coweight $\nu$ and a quadruple $(m_1, n_1, m_2, n_2)$ that is $(\mu_1 + \nu, \nu)$-
acceptable and $(\mu_2 + \nu, \nu)$-acceptable. We have the following diagram 
\begin{equation}
    \begin{tikzcd}[sep=huge]
\textnormal{Sh}_{\mu_1} \arrow[r,leftarrow, "\overleftarrow{h}_{\mu_1}"] \arrow[d,"\loc_p"] & \textnormal{Sh}_{\mu_1\mid\mu_2}^{\nu}\arrow[d,"\loc_p^{\nu}"] \arrow[r,rightarrow,"\overrightarrow{h}_{\mu_2}"] \arrow[d] & \textnormal{Sh}_{\mu_2} \arrow[d,"\loc_p"] \\
\sht^{\loc}_{\mu_1}  \arrow[r,leftarrow, "\overleftarrow{h}_{\mu_1}^{\loc}"] \arrow[d,"\res_{m_1,n_1}"] & \sht^{\nu,\loc}_{\mu_1\mid\mu_2} \arrow[r,rightarrow, "\overrightarrow{h}_{\mu_2}^{\loc}"] \arrow[d,"\res^{\nu}_{m_1,n_1}"] & \sht^{\loc}_{\mu_2} \arrow[d,"\res_{m_2,n_2}"]\\
\sht^{\loc(m_1,n_1)}_{\mu_1} \arrow[r,leftarrow, "\overleftarrow{h}_{\mu_1}^{\loc (m_1,n_1)}"] & \sht^{\nu,\loc(m_1,n_1)}_{\mu_1\mid\mu_2} \arrow[r,rightarrow, "\overrightarrow{h}_{\mu_2}^{\loc(m_2,n_2)}"] & \sht^{\loc (m_2,n_2)}_{\mu_2}
\end{tikzcd},
\end{equation}
where
\begin{itemize}
    \item all squares are commutative (discussions on diagram $(4.5)$ and diagram $(6.3)$,
    \item except for the square at the down right corner, and the other three squares are Cartesian (discussions on diagram $(6.3)$ and diagram $(4.5)$, 
    \item the morphism $\overleftarrow{h}_{\mu_1}$ is perfectly proper (\cite[Lemma 5.2.12]{xiao2017cycles}),
    \item the morphisms $\loc_p(m_i,n_i)$ are perfectly smooth (Proposition $6.1$).
\end{itemize}
Then the morphism $\loc_p^{\nu}(m_1,n_1):=\res_{m_1,n_1}^{\nu}\circ \loc_p^{\nu}$ is also perfectly proper. Thus we can pullback the cohomological correspondences (cf. \cite[A.2.11)]{xiao2017cycles}) on the right hand side of $(6.6)$ along $\loc_p^{\nu}(m_1,n_1)$ to obtain a map
$$
\loc_p^{\nu}(m_1,n_1)^{\star}:\cc_{{\sht}^{\loc}}(S(\widetilde{\Lambda_1}),S(\widetilde{\Lambda_2}))\rightarrow\cc_{\textnormal{Sh}_{\mu\mid\mu}^{\nu}}(\loc_p(m_1,n_1)^{\star}S(\widetilde{\Lambda_1}),   \loc_p(m_2,n_2)^{\star}(S(\widetilde{\Lambda_2})).
$$
Note that $\mu_i$ are minuscule, then the $\star$-pullback of $S(\widetilde{\Lambda_i})$ along $\loc_p(m_i,n_i)$ equals $\mathbb{Z}_{\ell}\langle d_i  \rangle$. Next, we construct a natural map 
\begin{equation}
    \mathfrak{C}_{W}:\cc_{\textnormal{Sh}_{\mu_1\mid\mu_2}^{\nu}}\big ((\textnormal{Sh}_{\mu_1},\mathbb{Z}_{\ell}\langle d_1  \rangle),(\textnormal{Sh}_{\mu_2},\mathbb{Z}_{\ell}\langle d_2  \rangle)\big )\rightarrow \cc_{{\textnormal{Sh}_{\mu_1\mid\mu_2}^{\nu}}}\big ((\textnormal{Sh}_{\mu_1},\mathcal{L}_{W,\mathbb{Z}_{\ell}}\langle d_1 \rangle),(\textnormal{Sh}_{\mu_2},\mathcal{L}_{W,\mathbb{Z}_{\ell}}\langle d_2 \rangle)\big).
\end{equation}
For each $n\in\mathbb{Z}^+$, we note that there exists an ind-scheme $\textnormal{Sh}_{\mu_1\mid\mu_2}^{(n)}$ which fits into the following commutative diagram such that both squares are Cartesian 
$$
\begin{tikzcd}[sep=huge]
\textnormal{Sh}_{\mu_1,K_{\ell}^{(n)}K^{\ell}} \arrow[r,leftarrow, "\overleftarrow{h}_{\mu_1}^{(n)}"] \arrow[d,"p_1^n"] & \textnormal{Sh}_{\mu_1\mid\mu_2}^{\nu,(n)} \arrow[r,rightarrow,"\overrightarrow{h}_{\mu_2}^{(n)}"] \arrow[d,"p^n"] & \textnormal{Sh}_{\mu_2,K_{\ell}^{(n)}K^{\ell}} \arrow[d,"p_2^n"] \\
\textnormal{Sh}_{\mu_1}  \arrow[r,leftarrow, "\overleftarrow{h}_{\mu_1}"] & \textnormal{Sh}_{\mu_1\mid\mu_2}^{\nu} \arrow[r,rightarrow, "\overrightarrow{h}_{\mu_2}"] & \textnormal{Sh}_{\mu_2}.
\end{tikzcd}
$$
Here the three vertical maps are the natural quotients by the finite group $K_{\ell}/K_{\ell}^{n}$ and are thus \'{e}tale.

Let $(f_n)_n:(\overleftarrow{h}_{\mu_1})^*(\mathbb{Z}/\ell^n\mathbb{Z}\langle d_1 \rangle)_n\rightarrow (\overrightarrow{h}_{\mu_2})^!(\mathbb{Z}/\ell^n\mathbb{Z}\langle d_2 \rangle)_n$ be a cohomological correspondence in $\cc_{\textnormal{Sh}_{\mu_1\mid\mu_2}^{\nu}}\big ((\textnormal{Sh}_{\mu_1},\mathbb{Z}_{\ell}\langle d_1  \rangle),(\textnormal{Sh}_{\mu_2},\mathbb{Z}_{\ell}\langle d_2  \rangle)\big )$. For each $n\in\mathbb{Z}^{+}$, the shifted pullback (cf. \cite[A.2.12]{xiao2017cycles}) of $f_n$ gives rise to a cohomological correspondence 
$$
\tilde{f}_n:(\overleftarrow{h}_{\mu_1}^{(n)})^*(\mathbb{Z}/\ell^n\mathbb{Z}\langle d_1 \rangle)\rightarrow (\overrightarrow{h}_{\mu_2}^{(n)})^!(\mathbb{Z}/\ell^n\mathbb{Z}\langle d_2 \rangle)
$$
in $ \cc_{\textnormal{Sh}_{\mu_1\mid\mu_2}^{\nu,(n)}}\big ((\textnormal{Sh}_{\mu_1,K_{\ell}^{(n)}K^{\ell}},\mathbb{Z}/{\ell^n}\mathbb{Z}\langle d_1  \rangle),(\textnormal{Sh}_{\mu_2,K_{\ell}^{(n)}K^{\ell}},\mathbb{Z}/{\ell^n}\mathbb{Z}\langle d_2  \rangle)\big )$. For any representation $W$ of $G_{\ql}$, recall the $\mathbb{Z}/\ell^n\mathbb{Z}$ module $\Lambda_{W,\ell}/\ell^n\Lambda_{W,\ell}$ constructed in \S $6.2$. The cohomological correspondence $\tilde{f}_n$ gives rise to a cohomological correspondence 
$$
\tilde{g}_n\in \cc_{\textnormal{Sh}_{\mu_1\mid \mu_2}^{\nu,(n)}}(\textnormal{Sh}_{\mu_1,K_{\ell}^{(n)}K^{\ell}}\times \Lambda_{W,\ell}/\ell^n\Lambda_{W,\ell}\langle d_1\rangle ,\textnormal{Sh}_{\mu_2,K_{\ell}^{(n)}K^{\ell}}\times \Lambda_{W,\ell}/\ell^n\Lambda_{W,\ell}\langle d_2 \rangle).
$$
In addition, the cohomological correspondence $\tilde{f}_n$ is $K_\ell/K_\ell^{(n)}$-equivariant. Then it follows that the cohomological correspondence $\tilde{g}_n$ is also $K_\ell/K_\ell^{(n)}$-equivariant and descends to a cohomological correspondence
$$
g_n\in \cc_{\textnormal{Sh}_{\mu_1\mid\mu_2}^{\nu}}((\textnormal{Sh}_{\mu_1},\mathcal{L}_{W,\ell,n}\langle d_1  \rangle),(\textnormal{Sh}_{\mu_2},\mathcal{L}_{W,\ell,n}\langle d_2  \rangle)).
$$

Defining $\mathfrak{C}_W((f_n)_n):=(g_n)_n$ completes the construction of $\mathfrak{C}_W$.

Compose the maps we previously construct,
\begin{align}
  &  \cc_{\sht^{\nu,\loc(m_1,n_1)}_{\mu_1\mid\mu_2}}\Big ((\sht^{\loc(m_1,n_1)}_{\mu_1},S(\widetilde{\Lambda_1})^{\loc (m_1,n_1)}),(\sht^{\loc(m_2,n_2)}_{\mu_1},S(\widetilde{\Lambda_2})^{\loc (m_2,n_2)})\Big )\\ \notag
  \xrightarrow{\loc_p^{\nu}(m_1,n_1)^{\star}} & \cc_{\textnormal{Sh}_{\mu_1\mid\mu_2}^{\nu}}((\textnormal{Sh}_{\mu_1},\mathbb{Z}_{\ell}\langle d_1  \rangle),(\textnormal{Sh}_{\mu_2},\mathbb{Z}_{\ell}\langle d_2  \rangle))\\ \notag
  \xrightarrow{\mathfrak{C}_W} & \cc_{{\textnormal{Sh}_{\mu_1\mid\mu_2}^{\nu}}}((\textnormal{Sh}_{\mu_1},\mathcal{L}_{W,\mathbb{Z}_{\ell}}\langle d_1 \rangle),(\textnormal{Sh}_{\mu_2},\mathcal{L}_{W,\mathbb{Z}_{\ell}}\langle d_2 \rangle))\\ \notag
  \xrightarrow{\textnormal{H}_c^*} & \textnormal{Hom}_{\mathcal{H}^p}(\textnormal{H}_c^{*}(\textnormal{Sh}_{\mu_1},\mathcal{L}_{W,\mathbb{Z}_{\ell}}\langle d_1 \rangle),\textnormal{H}_c^{*}(\textnormal{Sh}_{\mu_2},\mathcal{L}_{W,\mathbb{Z}_{\ell}}\langle d_2 \rangle)).
\end{align} 
We justify that the composition of maps in $(6.10)$ factors through $\cc_{\sht^{\loc}}(S(\widetilde{V_1}),S(\widetilde{V_2}))$. Note that the proof of Lemma $5.3.(3)$ and the definition of $\loc_p^{\nu}(m_1,n_1)^{\star}$ imply that for a quadruple $(m_1',n_1',m_2',n_2')$ of $(\mu_1+\nu,\nu)$-acceptable and $(\mu_2+\nu,\nu)$-acceptable integers, the functor $\loc_p^{\nu}(m_1,n_1)^{\star}$ commutes with the connecting morphism in $(4.12)$ (with $\mu_1$,$\mu_2$,$\lambda$ fixed). Let $\nu\leq\nu'$ and $(m_1',n_1',m_2',n_2')$ be a quadruple of non-negative integers satisfying appropriate acceptance conditions. The proper smooth base change shows that $\loc_p^{\nu}(m_1',n_1')^{\star}$ commutes with enlarging $\nu$ to $\nu'$. In addition, the proper smooth base change together with the construction of $\mathfrak{C}_W$ show that the following diagram commutes:
$$
\begin{tikzcd}[row sep=huge]
\cc_{\textnormal{Sh}_{\mu_1\mid\mu_2}^{\nu}}((\textnormal{Sh}_{\mu_1},\mathbb{Z}_{\ell}\langle d_1  \rangle),(\textnormal{Sh}_{\mu_2},\mathbb{Z}_{\ell}\langle d_2  \rangle)) \arrow[r,"\mathfrak{C}_W"] \arrow[d,"i_*"] & \cc_{{\textnormal{Sh}_{\mu_1\mid\mu_2}^{\nu}}}((\textnormal{Sh}_{\mu_1},\mathcal{L}_{W,\mathbb{Z}_{\ell}}\langle d_1 \rangle),(\textnormal{Sh}_{\mu_2},\mathcal{L}_{W,\mathbb{Z}_{\ell}}\langle d_2 \rangle)) \arrow[d,"i_*"] \\
\cc_{\textnormal{Sh}_{\mu_1\mid\mu_2}^{\nu'}}((\textnormal{Sh}_{\mu_1},\mathbb{Z}_{\ell}\langle d_1  \rangle),(\textnormal{Sh}_{\mu_2},\mathbb{Z}_{\ell}\langle d_2  \rangle)) \arrow[r,"\mathfrak{C}_W"] & \cc_{{\textnormal{Sh}_{\mu_1\mid\mu_2}^{\nu'}}}((\textnormal{Sh}_{\mu_1},\mathcal{L}_{W,\mathbb{Z}_{\ell}}\langle d_1 \rangle),(\textnormal{Sh}_{\mu_2},\mathcal{L}_{W,\mathbb{Z}_{\ell}}\langle d_2 \rangle)).
\end{tikzcd}
$$
Thus the map $\mathfrak{C}_W$ is compatible with the enlargement of $\nu$. Finally, by \cite[Lemma A.2.8]{xiao2017cycles}, the composition of maps $\textnormal{H}_c^{*}\circ \mathfrak{C}_W$ commutes with enlarging $\nu$ to $\nu'$. We complete the proof of the statement at the beginning of this paragraph.

Composing $(6.7)$ with $(6.11)$, we get a canonical map
\begin{equation}
    \textnormal{Hom}_{\textnormal{Coh}^{\hat{G}_{\ql}}(\hat{G}_{\ql}\sigma)}(\widetilde{V_1},\widetilde{V_2})\rightarrow \textnormal{Hom}_{\mathcal{H}^p}(\textnormal{H}_c^{*}(\textnormal{Sh}_{\mu_1},\mathcal{L}_{W,\ql}\langle d_1 \rangle),\textnormal{H}_c^{*}(\textnormal{Sh}_{\mu_2},\mathcal{L}_{W,\ql}\langle d_2 \rangle)).
\end{equation}
The fact that $(6.9)$ is compatible with the compositions of the source and target can be proved in an analogous way as \cite[Lemma 7.3.12]{xiao2017cycles}, and we omit the details. Then the action of $\mathcal{J}$ naturally translates to the right hand side of $(6.9)$ and upgrades it to our desired map
$$
\textnormal{Spc}:\textnormal{Hom}_{\textnormal{Coh}^{\hat{G}}(\hat{G}\sigma)}(\widetilde{V_1},\widetilde{V_2})\rightarrow  \textnormal{Hom}_{\mathcal{H}^p\otimes \mathcal{J}}(\textnormal{H}_c^{*}(\textnormal{Sh}_{\mu_1},\mathcal{L}_{W,\ql}\langle d_1 \rangle), \textnormal{H}_c^{*}(\textnormal{Sh}_{\mu_2},\mathcal{L}_{W,\ql}\langle d_2 \rangle)).
$$

\end{proof}
As discussed in \textit{loc.cit}, the action of $\mathcal{J}$ on $\textnormal{H}_c^{*}(\textnormal{Sh}_{\mu_i},\mathcal{L}_{W,\ql}\langle d_i \rangle)$ is expected to coincide with the usual Hecke algebra action, which may be understood as the Shimura variety analogue of V. Lafforgue's “$S=T$” theorem (cf. \cite{lafforgue2018chtoucas}). We prove this in the case of Shimura sets.
\begin{proposition}
Let $\textnormal{Sh}_K(G,X)$ be a zero-dimensional Shimura variety. Then the action of $\mathcal{J}$ on $\textnormal{H}_c^{*}(\textnormal{Sh}_{\mu_i},\mathcal{L}_{W,\ql}\langle d_i \rangle)$ is given by the classical Satake isomorphism.
\end{proposition}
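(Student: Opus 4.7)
The plan is to reduce Proposition $6.5$ to the local statement Corollary $5.5 (2)$ by unwinding the construction of $\textnormal{Spc}$ in the case of a Shimura set. First, I note that for a zero-dimensional Shimura variety one has $d=\langle 2\rho,\mu\rangle=0$, so the Hodge cocharacter $\mu$ is central; consequently $V_\mu$ is a one-dimensional representation of $\hat{G}_{\ql}$ and $\widetilde{V_\mu}$ is a line bundle on $[\hat{G}\sigma/\hat{G}]$. Therefore $\textnormal{End}_{\textnormal{Coh}^{\hat{G}}(\hat{G}\sigma)}(\widetilde{V_\mu})=\mathcal{J}$ canonically, and the $\mathcal{J}$-action on $\textnormal{H}_c^*(\textnormal{Sh}_\mu,\mathcal{L}_{W,\ql})$ in question is literally the restriction of $\textnormal{Spc}$ to this endomorphism ring.

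Next, I would trace through the chain of maps (6.7)--(6.11) used to build $\textnormal{Spc}$. For central $\mu$, the stack $\sht_\mu^{\loc}$ is perfectly smooth of dimension $0$ and the IC sheaf $S(\widetilde{V_\mu})^{\loc(m,n)}$ is (up to the shift/twist $\langle\cdot\rangle$) the same constant sheaf as $\delta_{\mathbf{1}}^{m,n}$ used in Corollary $5.5$; the proof of that corollary goes through verbatim and identifies $\mathcal{S}_{\Lambda,\Lambda}:\mathcal{J}\to \cc_{\sht^{\loc}}(S(\widetilde{V_\mu}),S(\widetilde{V_\mu}))\simeq \mathcal{H}_{G,E}$, after tensoring with $E[q^{\pm 1/2}]$, with the classical Satake transform. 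It therefore suffices to show that the composition
\[
\mathcal{H}_{G,E}\;\simeq\;\cc_{\sht^{\loc}}(S(\widetilde{V_\mu}),S(\widetilde{V_\mu}))\;\xrightarrow{\loc_p^\nu(m_1,n_1)^{\star}}\;\cc_{\textnormal{Sh}_{\mu\mid\mu}^\nu}(\mathbb{Z}_\ell,\mathbb{Z}_\ell)\;\xrightarrow{\mathfrak{C}_W}\;\cc_{\textnormal{Sh}_{\mu\mid\mu}^\nu}(\mathcal{L}_{W,\mathbb{Z}_\ell},\mathcal{L}_{W,\mathbb{Z}_\ell})\;\xrightarrow{\textnormal{H}_c^*}\;\textnormal{End}(\textnormal{H}_c^*)
\]
agrees with the classical Hecke algebra action on $\textnormal{H}_c^*(\textnormal{Sh}_\mu,\mathcal{L}_{W,\ql})$.

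For the third step I would exploit that both squares in (6.3), and hence the squares in (6.8), are Cartesian. Under the isomorphism (5.15), a basis element of $\mathcal{H}_{G,E}$ corresponds to a top-dimensional irreducible component of $\sht_{\mu\mid\mu}^{\nu,\loc(m_1,n_1)}$, i.e.\ to a double coset $K_p g K_p\subset G(\mathbb{Q}_p)$; its pullback along $\loc_p^\nu(m_1,n_1)$ is the union of irreducible components of $\textnormal{Sh}_{\mu\mid\mu}^\nu$ parametrising precisely the classical prime-$p$ Hecke correspondence attached to this double coset (this is where the Cartesian property is used, together with the moduli description of $\loc_p$). For a Shimura set, taking $\textnormal{H}_c^*$ of such a correspondence visibly recovers the usual action by double cosets on locally constant functions with values in $\mathcal{L}_{W,\ql}$, because the operation $\mathfrak{C}_W$ is by construction the twisting that promotes a correspondence with constant coefficients to one with coefficients in the associated local system $\mathcal{L}_{W,\mathbb{Z}_\ell}$ on the finite \'etale cover $\textnormal{Sh}_{\mu,K_\ell^{(n)}K^\ell}\to \textnormal{Sh}_{\mu,K}$.

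The hard part is the bookkeeping in the third step: one must match the pull--push formula for the cohomological correspondence $\loc_p^\nu(m_1,n_1)^{\star}(\mathcal{S}_{\Lambda,\Lambda}(j))$, computed via the identification (5.15) of correspondences with $E$-valued functions on irreducible components, with the standard formula for the Hecke operator $T_f$ acting on $\textnormal{H}_c^*$ via a double coset $K_pgK_p$. Once this compatibility is established at the level of a single double coset, the statement follows from $\mathbb{Z}_\ell$-linearity and the fact that $\mathcal{J}\otimes E[q^{\pm 1/2}]$ is spanned by such double cosets under $\mathcal{S}_{\Lambda,\Lambda}$, i.e.\ from Corollary $5.5(2)$ exactly as in the zero-dimensional version of the ``$S=T$'' argument.
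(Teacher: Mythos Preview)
Your proposal is correct and follows essentially the same route as the paper: reduce to Corollary~$5.5(2)$, use zero-dimensionality to identify cohomological correspondences with functions on $\textnormal{Sh}_{\mu\mid\mu}^{\nu}$, and observe that those functions are pulled back from $\sht^{\loc}_{\mu\mid\mu}=G(\mathbb{Z}_p)\backslash G(\mathbb{Q}_p)/G(\mathbb{Z}_p)$ along $\loc_p$. The only notable difference is in the treatment of $\mathfrak{C}_W$: rather than the abstract description you give, the paper first takes $W=\mathbb{Z}_\ell^n$ (so that the étale local system is trivial and $\tilde{f}_n$ is visibly a direct sum of copies of the constant-coefficient function), and then handles a general $W$ by resolving $\Lambda_{W,\ell}/\ell^n\Lambda_{W,\ell}$ by free $\mathbb{Z}_\ell/\ell^n$-modules, which makes the ``bookkeeping'' you anticipate essentially trivial.
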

\begin{proof}
Let $f\in\mathcal{J}$. Since the Shimura variety we consider is zero-dimensional, it follows from \cite[A.2.3(5)]{xiao2017cycles} that the cohomological correspondence $\loc_p^{\star}(\mathcal{S}_{\mathcal{O}}(f))$ can be identified with a $\mathbb{Z}_{\ell}$-valued function on $\textnormal{Sh}_{\mu\mid\mu}$. By our construction of the map $\textnormal{Spc}$, this function is given by the pullback of a function $f'$ on $\sht^{\loc}_{\mu\mid\mu}=G(\mathbb{Z}_p)\backslash G(\mathbb{Q}_p)/G(\mathbb{Z}_p)$. Corollary $5.5(2)$ thus implies that the function $f'$ is exactly the function $\mathcal{S}_{\mathcal{O}}(f)\in H_{G,]\mathbb{Z}_\ell[p^{-1/2},p^{1/2}]}$ which is the image of $f$ under the classical Satake isomorphism.

For any $n\in \mathbb{Z}^+$, take $W=\mathbb{Z}_{\ell}^n$. Recall our construction of $\mathfrak{C}_W$, the cohomological correspondence $\tilde{f}_n$ is given by a finite direct sum of the function $\loc_p^{\star}(\mathcal{S}_{\mathcal{O}}(f))$ since the Shimura variety we consider is a set of discrete points. Then the action of $\textnormal{Spc}(f)$ on $\textnormal{H}_c^*(\textnormal{Sh}_{\mu_i},\mathcal{L}_{W,\ql}\langle d_i \rangle)$ is given by the classical Satake isomorphism. For general $W$, we take resolutions of it as in $(6.10)$, and the statement follows from the case $W=\mathbb{Z}_{\ell}^n$.
\end{proof}
\subsection{Non-vanishing of the geometric Jacquet-Langlands transfer}
In Theorem $6.2.2$, we constructed the geometric Jacquet-Langlands transfer 
$$
\textnormal{Spc}:\textnormal{Hom}_{\textnormal{Coh}^{\hat{G}_{\ql}}(\hat{G}_{\ql}\sigma)}(\widetilde{V_1},\widetilde{V_2})\rightarrow  \textnormal{Hom}_{\mathcal{H}^p\otimes \mathcal{J}}(\textnormal{H}_c^{*}(\textnormal{Sh}_{\mu_1},\mathcal{L}_{W,\ql}\langle d_1 \rangle), \textnormal{H}_c^{*}(\textnormal{Sh}_{\mu_2},\mathcal{L}_{W,\ql}\langle d_2 \rangle).
$$

It is natural to ask when this transfer map is nonzero. We discuss this issue in this section. The idea essentially follows from the discussion in \cite[\S 7.4]{xiao2017cycles}, and we briefly sketch it here.

Assume that $\textnormal{Sh}_{\mu_1,K_1}(G_1,X_1)$ is a zero dimensional Shimura variety. The Jacquet-Langlands transfer map induces the following map 
$$
    \textnormal{JL}_{1,2}(\mathbf{a}):\textnormal{H}_c^0(\textnormal{Sh}_{\mu_1},\mathcal{L}_{W,\mathbb{Q}_\ell})\rightarrow \textnormal{H}_c^*(\textnormal{Sh}_{\mu_2},\mathcal{L}_{W,\mathbb{Q}_\ell}\langle d_2\rangle),
$$
for $\mathbf{a}\in\textnormal{Hom}_{\textnormal{Coh}^{\hat{G}_{\ql}}(\hat{G}_{\ql}\sigma_p)}(\widetilde{V_1},\widetilde{V_2})$. Let $\mathbf{a}'\in \textnormal{Hom}_{\textnormal{Coh}^{\hat{G}_{\ql}}(\hat{G}_{\ql}\sigma_p)}(\widetilde{V_2},\widetilde{V_1})$be the morphism such that the induced map 
$$
    \textnormal{JL}_{2,1}(\mathbf{a}'):\textnormal{H}_c^*(\textnormal{Sh}_{\mu_2},\mathcal{L}_{W,\mathbb{Q}_\ell}\langle d_2\rangle)\rightarrow \textnormal{H}_c^0(\textnormal{Sh}_{\mu_1},\mathcal{L}_{W,\mathbb{Q}_\ell})
$$
is dual to $\textnormal{JL}_{1,2}(\mathbf{a})$ when viewing it as a cohomological correspondence (cf. \cite[\S A.2.18]{xiao2017cycles}).
 
The composition map $\textnormal{JL}_{2,1}(\mathbf{a}')\circ \textnormal{JL}_{1,2}(\mathbf{a})$ gives rise to an endomorphism of $\widetilde{V_{\mu_1}}\in \textnormal{Coh}^{\hat{G}_{\ql}}(\hat{G}_{\ql}\sigma_p)$. By \cite[Theorem 1.4.1]{xiao2017cycles}, the hom spaces $\textnormal{Hom}_{\textnormal{Coh}^{\hat{G}_{\ql}}(\hat{G}_{\ql}\sigma)}(\widetilde{V_1},\widetilde{V_2})$ and $\textnormal{Hom}_{\textnormal{Coh}^{\hat{G}_{\ql}}(\hat{G}_{\ql}\sigma)}(\widetilde{V_2},\widetilde{V_1})$ are both finite projective $\mathcal{J}$-modules. Thus it makes sense to consider the determinant of the pairing
 \begin{equation}
   \textnormal{Hom}_{\textnormal{Coh}^{\hat{G}_{\ql}}(\hat{G}_{\ql}\sigma_p)}(\widetilde{V_2},\widetilde{V_1}) \otimes \textnormal{Hom}_{\textnormal{Coh}^{\hat{G}_{\ql}}(\hat{G}_{\ql}\sigma_p)}(\widetilde{V_1},\widetilde{V_2})\rightarrow \mathcal{J}.
 \end{equation}
 In particular, this determinant can be regarded as a regular function on the stack $[\hat{G}_{\ql}\sigma_p/\hat{G}_{\mathbb{Q}_\ell}]$; for a detailed discussion on the pairing $(6.12)$, see \cite{xiao2018vector}.

By Theorem $6.1.2$ in \textit{loc.cit}, we conclude the following result:
\begin{theorem}
Let $\pi_f$ be an irreducible $\mathcal{H}_K$-module, and let 
$$
\textnormal{H}_c^0(\textnormal{Sh}_{\mu_1},\mathcal{L}_{W,\mathbb{Q}_\ell})[\pi_f]:=\textnormal{Hom}_{\mathcal{H}_K}(\pi_f,\textnormal{H}_c^0(\textnormal{Sh}_{\mu_1},\mathcal{L}_{W,\mathbb{Q}_\ell}))\otimes \pi_f
$$
denote the $\pi_f$-isotypical component.
Then, the map 
$$
\textnormal{JL}_{1,2}(\mathbf{a}):\textnormal{H}_c^0(\textnormal{Sh}_{\mu_1},\mathcal{L}_{W,\mathbb{Q}_\ell})\rightarrow \textnormal{H}_c^d(\textnormal{Sh}_{\mu_2},\mathcal{L}_{W,\mathbb{Q}_\ell})
$$
restricted to $\textnormal{H}_c^0(\textnormal{Sh}_{\mu_1},\mathcal{L}_{W,\mathbb{Q}_\ell})[\pi_f]$ is injective if the Satake parameters of $\pi_f$ is general with respect to $V_{\mu_2}$ in the sense of \cite{xiao2017cycles}.
\end{theorem}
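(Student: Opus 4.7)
The plan is to deduce injectivity of $\textnormal{JL}_{1,2}(\mathbf{a})$ on the $\pi_f$-isotypical component by exhibiting a dual transfer $\textnormal{JL}_{2,1}(\mathbf{a}')$ whose composition with $\textnormal{JL}_{1,2}(\mathbf{a})$ acts as a nonzero scalar there. Concretely, for each $\mathbf{a}'\in \textnormal{Hom}_{\textnormal{Coh}^{\hat{G}_{\ql}}(\hat{G}_{\ql}\sigma_p)}(\widetilde{V_2},\widetilde{V_1})$, consider
$$
\textnormal{JL}_{2,1}(\mathbf{a}')\circ \textnormal{JL}_{1,2}(\mathbf{a})\in \textnormal{End}\bigl(\textnormal{H}_c^0(\textnormal{Sh}_{\mu_1},\mathcal{L}_{W,\mathbb{Q}_\ell})\bigr).
$$
By the compatibility of $\textnormal{Spc}$ with compositions established in Theorem $6.2.2$, this endomorphism coincides with $\textnormal{Spc}(\mathbf{a}'\circ \mathbf{a})$, where $\mathbf{a}'\circ \mathbf{a}\in \textnormal{End}_{\textnormal{Coh}^{\hat{G}_{\ql}}(\hat{G}_{\ql}\sigma_p)}(\widetilde{V_1})$ corresponds under the pairing $(6.12)$ to a well-defined element of $\mathcal{J}$, i.e., a regular function on $[\hat{G}_{\ql}\sigma_p/\hat{G}_{\mathbb{Q}_\ell}]$.

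Next I would invoke Proposition $6.2.4$: since $\textnormal{Sh}_{\mu_1,K_1}(G_1,X_1)$ is assumed to be a Shimura set (zero-dimensional), the action of $\mathcal{J}$ on $\textnormal{H}_c^0(\textnormal{Sh}_{\mu_1},\mathcal{L}_{W,\mathbb{Q}_\ell})$ factors through the classical Satake isomorphism. Hence on the $\pi_f$-isotypical component $\textnormal{H}_c^0(\textnormal{Sh}_{\mu_1},\mathcal{L}_{W,\mathbb{Q}_\ell})[\pi_f]$, the image of $\mathbf{a}'\circ \mathbf{a}$ in $\mathcal{J}$ acts as multiplication by its evaluation at the Satake parameter of $\pi_f$, viewed as a point of $[\hat{G}_{\ql}\sigma_p/\hat{G}_{\mathbb{Q}_\ell}]$.

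The generality hypothesis on the Satake parameter of $\pi_f$ relative to $V_{\mu_2}$ is, in the sense of \cite{xiao2017cycles}, precisely the statement that the determinant of the pairing $(6.12)$, viewed as a regular function on $[\hat{G}_{\ql}\sigma_p/\hat{G}_{\mathbb{Q}_\ell}]$, does not vanish at this Satake parameter; this is exactly the setting in which \cite[Theorem $6.1.2$]{xiao2017cycles} and \cite{xiao2018vector} produce a non-degenerate pairing. Thus one can find $\mathbf{a}'$ so that $\mathbf{a}'\circ \mathbf{a}$ evaluates to a nonzero scalar at the Satake parameter, and the corresponding composite $\textnormal{JL}_{2,1}(\mathbf{a}')\circ \textnormal{JL}_{1,2}(\mathbf{a})$ then acts by multiplication by this nonzero scalar on $\textnormal{H}_c^0(\textnormal{Sh}_{\mu_1},\mathcal{L}_{W,\mathbb{Q}_\ell})[\pi_f]$. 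This forces the restriction of $\textnormal{JL}_{1,2}(\mathbf{a})$ to $[\pi_f]$ to be injective.

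The main obstacle I expect is the bookkeeping needed to verify that $\textnormal{JL}_{2,1}(\mathbf{a}')\circ \textnormal{JL}_{1,2}(\mathbf{a})$ really equals $\textnormal{Spc}(\mathbf{a}'\circ \mathbf{a})$ with $\mathbf{a}'\circ \mathbf{a}$ landing in $\mathcal{J}$ via the pairing $(6.12)$ rather than merely in $\textnormal{End}_{\textnormal{Coh}}(\widetilde{V_1})$. This requires tracing the duality of cohomological correspondences used in the definition of $\textnormal{JL}_{2,1}(\mathbf{a}')$ (cf. \cite[\S A.2.18]{xiao2017cycles}) through the constructions in Sections $5$ and $6$, and then combining this with Corollary $5.5(2)$ to transport the question of nonvanishing on the $\pi_f$-isotypical piece to nonvanishing of a regular function on $[\hat{G}_{\ql}\sigma_p/\hat{G}_{\mathbb{Q}_\ell}]$ at the Satake parameter.
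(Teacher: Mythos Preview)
Your proposal is correct and follows essentially the same route as the paper. The paper's own treatment is extremely terse: after setting up the dual transfer $\textnormal{JL}_{2,1}(\mathbf{a}')$, noting that the composition $\textnormal{JL}_{2,1}(\mathbf{a}')\circ \textnormal{JL}_{1,2}(\mathbf{a})$ gives an endomorphism of $\widetilde{V_{\mu_1}}$, and introducing the pairing $(6.12)$ with its determinant viewed as a regular function on $[\hat{G}_{\ql}\sigma_p/\hat{G}_{\mathbb{Q}_\ell}]$, the paper simply writes ``By Theorem $6.1.2$ in \textit{loc.cit}, we conclude the following result'' and states the theorem without further argument. Your write-up supplies the details the paper omits---in particular the use of Proposition $6.2.4$ (the $S=T$ statement for Shimura sets) to convert the $\mathcal{J}$-action into evaluation at the Satake parameter, and the observation that since $\textnormal{Sh}_{\mu_1}$ is zero-dimensional the cocharacter $\mu_1$ is central, so $\textnormal{End}(\widetilde{V_{\mu_1}})\cong\mathcal{J}$ and the composition $\mathbf{a}'\circ\mathbf{a}$ genuinely lands in $\mathcal{J}$. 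The ``main obstacle'' you flag (tracing the duality of cohomological correspondences through \S A.2.18 of \cite{xiao2017cycles}) is exactly what the paper sweeps under the citation.
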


\providecommand{\bysame}{\leavevmode\hbox to3em{\hrulefill}\thinspace}
\providecommand{\MR}{\relax\ifhmode\unskip\space\fi MR }
\providecommand{\MRhref}[2]{%
  \href{http://www.ams.org/mathscinet-getitem?mr=#1}{#2}
}
\providecommand{\href}[2]{#2}

\end{document}